\def\R{\mathbb{R}}
\def\epsilon{\varepsilon}
\newcommand{\be}{\begin{equation}}
\newcommand{\ee}{\end{equation}}
\newcommand{\baa}{\begin{array}}
\newcommand{\eaa}{\end{array}}
\newcommand{\ba}{\begin{eqnarray}}
\newcommand{\ea}{\end{eqnarray}}
\newtheorem{theorem}{Theorem}[section]
\newtheorem{lemma}[theorem]{Lemma}
\newtheorem{definition}[theorem]{Definition}
\newtheorem{remark}[theorem]{Remark}
\newtheorem{proposition}[theorem]{Proposition}
\numberwithin{equation}{section}
\newenvironment{proof}[1][Proof]{\noindent\textbf{#1.} }{\hfill $\Box$}
\begin{document}
\date{}
\title{\bf{Pushed fronts of monostable reaction-diffusion-advection equations}\thanks{The author is supported by the fundamental research funds for the central universities and  NSF of China (no. 12101456).}}
\author{Hongjun Guo \textsuperscript{a} \\
\\
\footnotesize{\textsuperscript{a} School of Mathematical Sciences, Institute for Advanced Study, Tongji University, Shanghai, China}}
\maketitle

\begin{abstract}\noindent{In this paper, we prove some qualitative properties of pushed fronts for the periodic reaction-diffusion-equation with general monostable nonlinearities. Especially, we prove the exponential behavior of pushed fronts when they are approaching their unstable state. The proof also allows us to get the exponential behavior of pulsating fronts with speed $c$ larger than the minimal speed. Through  the exponential behavior, we  finally prove the stability of pushed fronts.}
\noindent{}
\vskip 0.1cm
\noindent\textit{Keywords.} Reaction-diffusion-advection equations; Pushed fronts; Asymptotic behavior; Stability.
\vskip 0.1cm
%\noindent{MSC2020:} 35B08, 35C07, 35K58.
\end{abstract}

%%%%%%%%%%%%%%%%%%%%%%%%%%%%%%%%%%%%%%%%%%%%%%%
%%%%%%%%%%%%%%%%%%%%%%%%%%%%%%%%%%%%%%%%%%%%%%%

\section{Introduction}

This paper follows up the articles of Hamel \cite{H2008} and his with Roques \cite{HR2011} on qualitative properties of pulsating fronts in the periodic monostable case. In \cite{H2008,HR2011}, they have proven various properties for pulsating fronts under the KPP assumption. However, the case without the KPP assumption and when the minimal speed of the fronts is larger than $c_0$ where $c_0$ is defined later by \eqref{c-star}, was not covered by \cite{H2008,HR2011}. In their articles \cite{H2008,HR2011}, they have proven the monotonicity, exponential behavior, uniqueness and stability of KPP pulsating fronts. Here, we show that these properties still hold for the non-KPP case.

We consider the following reaction-diffusion-advection equation
\begin{eqnarray}\label{RD}
\left\{\begin{array}{lll}
u_t-\nabla \cdot (A(z)\nabla u)+q(z)\cdot \nabla u =f(z,u), &&t\in\R, z=(x,y)\in \overline{\Omega},\\
\nu A\nabla u=0, &&t\in\R, z\in\partial\Omega,
\end{array}
\right.
\end{eqnarray}
where $\Omega$ is a smooth unbounded domain of $\R^{N}$ and $\nu$ denotes the outward unit normal on the boundary $\partial\Omega$. Assume that $x$ is $d$-dimensional and $y$ is $(N-d)$-dimensional, where $d$ is an integer of $\{1,\ \cdots,\ N\}$. Denote
$$x=(x_1,\cdots,x_d),\quad y=(x_{d+1},\cdots,x_{N}),\quad z=(x,y).$$
The underlying domain $\Omega$ is assumed to be of class $C^{2,\alpha}$ for some $\alpha>0$, periodic with respect to $x$ and bounded with respect to $y$. That is, there are $d$ positive real numbers $L_1$, $\cdots$, $L_d$ such that
\begin{eqnarray*}
\left\{\begin{array}{lll}
\exists R\ge 0,\ \forall z=(x,y)\in \Omega,\ |y|\le R,\\
\forall k\in L_1\mathbb{Z}\times\cdots\times L_d\mathbb{Z}\times \{0\}^{N-d},\ \Omega=\Omega+k.
\end{array}
\right.
\end{eqnarray*}
We denote the periodicity cell by 
$$\mathcal{C}=\{(x,y)\in \Omega; \ x\in (0,L_1)\times\cdots\times (0,L_d)\}.$$
Typical examples of such domains are the whole space $\R^N$, the whole space with periodic perforations, infinite cylinders with constant or periodically undulating sections, etc. 

The coefficients $A$, $q$, $f$ of \eqref{RD} are assumed to be periodic with respect to $x$, in the sense that
$$\omega(x+k,y)=\omega(x,y) \hbox{ for all $(x,y)\in \overline{\Omega}$ and $k\in L_1\mathbb{Z}\times\cdots\times L_d\mathbb{Z}$}.$$ 
The matrix field $A(x,y)=(A_{ij}(x,y))_{1\le i,j\le N}$ is symmetric of class $C^{1,\alpha}(\overline{\Omega})$ and satisfies
\begin{eqnarray*}
\left\{\begin{array}{lll}
\exists 0<c_1\le c_2,\ \forall \xi\in \R^N,\ \forall (x,y)\in \overline{\Omega},\\
c_1 |\xi |^2\le \sum_{1\le i,j\le N} A_{ij}(x,y) \le c_2 |\xi |^2.
\end{array}
\right.
\end{eqnarray*}
The vector field $q(x,y)=(q_i(x,y))_{1\le i\le N}$ is of class $C^{0,\alpha}(\overline{\Omega})$ and divergence free $\nabla\cdot q=0$ in $\overline{\Omega}$.
The nonlinearity $f(x,y,u)$ is continuous, of class $C^{0,\alpha}$ with respect to $(x,y)$ locally uniformly in $u\in \R$ and of class $C^{1,\alpha}$ with respect to $u$ locally uniformly in $(x,y)\in \overline{\Omega}$. We assume that $0$ and $1$ are zeroes of $f$, that is,
$$f(x,y,0)=f(x,y,1)=0, \hbox{ for all $(x,y)\in \overline{\Omega}$}.$$
This means that $0$ and $1$ are stationary solutions of \eqref{RD}. Notice that $0$ and $1$ could be more general stationary solutions and they here are taken without loss of generality, see \cite{H2008} for the transformation from general stationary solutions $p(x)$ and $q(x)$ to $0$ and $1$. The general framework of this work is under monostable nonlinearities. Therefore, to clarify this, we let $\mu_0$, $\mu_1$ be the principal eigenvalues and $\psi_0(x,y)$, $\psi_1(x,y)$ be the principal eigenfunctions of 
\begin{eqnarray*}
\left\{\begin{array}{lll}
-\nabla\cdot (A(x,y)\nabla \psi_0) +q(x,y)\cdot \nabla\psi_0 -f_u(x,y,0)\psi_0=\mu_0\psi_0, \quad &&\hbox{ in $\overline{\Omega}$},\\
\nu A(x,y)\nabla\psi_0=0, \quad &&\hbox{ on $\partial\Omega$}.
\end{array}
\right.
\end{eqnarray*}
and
\begin{eqnarray}\label{mu1}
\left\{\begin{array}{lll}
-\nabla\cdot (A(x,y)\nabla \psi_1) +q(x,y)\cdot \nabla\psi_1 -f_u(x,y,1)\psi_1=\mu_1\psi_1, \quad &&\hbox{ in $\overline{\Omega}$},\\
\nu A(x,y)\nabla\psi_1=0, \quad &&\hbox{ on $\partial\Omega$}.
\end{array}
\right.
\end{eqnarray}
respectively. Throughout this paper, we assume that $0$ and $1$ are linearly unstable and linearly stable respectively, that is,
$$\mu_0<0 \hbox{ and } \mu_1>0.$$
For some properties of pulsating fronts, the assumption that $1$ is linearly stable could be more general as being weakly stable, see \cite{H2008,HR2011}. But since the main difficulty and interest lie on the part of the front approaching $0$, we only assume $1$ being linearly stable for simplicity. A typical example of such a monostable nonlinearity with $\mu_0<0$ and $\mu_1>0$ is when $f_{u}(x,y,0)>0$ and $f_u(x,y,0)<0$.

We are concerned with pulsating fronts for the periodic framework, which are defined in the following. 

\begin{definition}
Given a unit vector $e\in \R^{d}\times\{0\}^{N-d}$, a time-global classical solution $u(t,x,y)$ of \eqref{RD} is called a pulsating front connecting $0$ and $1$, propagating in the direction $e$ with speed $c$ if there exist a continuous function $\phi_c: \R\times\overline{\Omega}\rightarrow \R$ and a constant $c$ satisfying $$(x,y)\rightarrow \phi_c(s,x,y) \hbox{ is periodic in $\overline{\Omega}$ for all $s\in \R$},$$
and
$$\phi_c(-\infty,x,y)=1,\ \phi_c(+\infty,x,y)=0 \hbox{ uniformly in $(x,y)\in \overline{\Omega}$},$$
such that
$$u(t,x,y)=\phi_c(x\cdot e -ct,x,y) \hbox{ for all $(t,x,y)\in \R\times\overline{\Omega}$}.$$
\end{definition}
With a slight abuse of notion, $x\cdot e$ denotes $x_1 e_1+\cdots+x_{d} e_{d}$ where $e_1$, $\cdots$, $e_d$ are the first $d$ components of the vector $e$. We only consider the fronts between $0$ and $1$, that is,
$$0<u(t,x,y)<1 \hbox{ for all $(t,x,y)\in \R\times\overline{\Omega}$}.$$
Some existence results of pulsating fronts are known under some more restrictions on $f$ and $q$, see \cite{BH2002,BHN,BHR}. For instance, if 
\begin{eqnarray}\label{ex-result}
\left\{\begin{array}{lll}
f(x,y,u)>0 \hbox{ for all $(x,y)\in \overline{\Omega}$ and $u\in (0,1)$},\\
f(x,y,u) \hbox{ is nonincreasing w.r.t. $u$ in a left neighborhood of $1$},\\
\nabla\cdot q=0 \hbox{ in $\overline{\Omega}$},\ q\cdot \nu=0 \hbox{ on $\partial \Omega$},\\
\int_{\mathcal{C}} q_i(x,y)dxdy=0 \hbox{ for $1\le i\le d$},
\end{array}
\right.
\end{eqnarray}
and it holds the KPP assumption
\be\label{KPP}
f(x,y,u)\le f_{u}(x,y,0)u \hbox{ for all $(x,y)\in\overline{\Omega}$ and $u\in [0,1]$},
\ee
then, given any unit vector $e\in\R^d\times\{0\}^{N-d}$, there is a minimal speed $c_*(e)>0$ such that pulsating fronts $\phi_c(x\cdot e-ct,x,y)$ exist if and only if $c\ge c_*(e)$, see \cite{BHN}.

We give some notions here. For each $\lambda\in \R$, call $k(\lambda)$ the principal eigenvalue of the operator
$$L_{\lambda} \varphi:=-\nabla\cdot (A\nabla\varphi) +2\lambda eA\nabla \varphi +q\cdot \nabla\varphi +[\lambda \nabla\cdot (A e) -\lambda q\cdot e -\lambda^2 eAe -f_u(x,y,0)]\varphi,$$
on the set
$$E_{\lambda}=\{\varphi\in C^2(\overline{\Omega});\ \varphi \hbox{ is periodic in $\overline{\Omega}$ and $\nu A\nabla\varphi=\lambda (\nu A e)\varphi$ on $\partial\Omega$}\}.$$
Notice that $k(0)=\mu_0<0$.
Define
\be\label{c-star}
c_0(e)=\inf_{\lambda>0} \Big(-\frac{k(\lambda)}{\lambda}\Big),
\ee
and
$$c^{*}(e)=\inf\{c\in\R; \hbox{ $c$ is the speed of a pulsating front $\phi(x\cdot e -ct,x,y)$ of \eqref{RD}}\}.$$
By Proposition~1.2 of \cite{H2008}, we know that $c^*(e)\ge c_0(e)$. In the sequel, we will assume that there is a pulsating front with speed $c^*(e)$. 

If \eqref{ex-result} and the KPP assumption \eqref{KPP} hold, we know that there is a pulsating front $\phi_{c_*}(x\cdot e-c^*(e)t,x,y)$ with $c^*(e)=c_0(e)$. We recall a few results for the qualitative properties of pulsating fronts under the KPP assumption \eqref{KPP} from \cite{H2008}. The monotonicity of all pulsating fronts (with and without the KPP assumption \eqref{KPP}) has been proved in \cite{H2008}, that is, $\phi_s(s,x,y)<0$ for all $(s,x,y)\in \R\times\overline{\Omega}$. The exponential behavior of $\phi(s,x,y)$ approaching $0$ with the KPP assumption \eqref{KPP}
has also been proved in \cite{H2008}. We summarize these results as following: under \eqref{KPP}, 
\begin{itemize}
\item[(i)] if there is a pulsating front $\phi_c(x\cdot e-c t,x,y)$ with $c>c_0(e)$, then there is $B>0$ such that
$$\phi_c(s,x,y)\sim B e^{-\lambda_c s}\psi_{\lambda_c}(x,y) \hbox{ as $s\rightarrow +\infty$ uniformly in $(x,y)\in \overline{\Omega}$},\footnote{For functions $f(s,x,y)$ and $g(s,x,y)$, the relationship ``$f\sim g$ as $s\rightarrow +\infty$ uniformly in $(x,y)\in\overline{\Omega}$" is understood as $\sup_{(x,y)\in\overline{\Omega}, s\ge \tau} |f/g-1|\rightarrow 0$ as $\tau\rightarrow +\infty$.}.$$
where $\lambda_c$ is the minimal root of $k(\lambda)+c\lambda=0$ and $\psi_{\lambda_c}$ is the principle eigenfunction of $L_{\lambda_c}\psi=k(\lambda_c)\psi$ with $\|\psi_{\lambda_c}\|_{L^{\infty}(\mathcal{C})}=1$,

\item[(ii)] if there is a pulsating front $\phi_{c_0}(x\cdot e-c_0(e) t,x,y)$, then there is $B>0$ such that
$$\phi_{c_0}(s,x,y)\sim B s^{2m+1} e^{-\lambda_{c_0} s}\psi_{\lambda_{c_0}}(x,y) \hbox{ as $s\rightarrow +\infty$ uniformly in $(x,y)\in \overline{\Omega}$},$$
where $\lambda_{c_0}$ is the unique root of $k(\lambda)+c_0\lambda=0$, $2m+2$ is its multiplicity and $\psi_{\lambda_{c_0}}$ is the principle eigenfunction of $L_{\lambda_{c_0}}\psi=k(\lambda_{c_0})\psi$ with $\|\psi_{\lambda_{c_0}}\|_{L^{\infty}(\mathcal{C})}=1$.
\end{itemize}
For the general assumption, that is, without the KPP assumption \eqref{KPP}, Hamel \cite{H2008} also got a logarithmic equivalent of $\phi_c(s,x,y)$. Precisely, for the pulsating front $\phi_c(x\cdot e-ct,x,y)$ with $c>c_0(e)$, if there is $c'<c$ such that there exists a pulsating front $\phi_{c'}(x\cdot e-c't,x,y)$, then 
\be\label{logeq}
\ln \phi_c(s,x,y)\sim \lambda_c s \hbox{ as $s\rightarrow +\infty$ uniformly in $(x,y)\in \overline{\Omega}$},
\ee
where $\lambda_c$ is the minimal root of $k(\lambda)+c\lambda=0$. If there is a pulsating front $\phi_{c_0}(x\cdot e-c_0(e) t,x,y)$, then
$$\ln \phi_{c_0}(s,x,y)\sim \lambda_{c_0} s \hbox{ as $s\rightarrow +\infty$ uniformly in $(x,y)\in \overline{\Omega}$},$$
where $\lambda_{c_0}$ is the unique root of $k(\lambda)+c_0\lambda=0$.

In this paper, we consider the case $c^{*}(e)>c_0(e)$. This case may happen for instance if $f(x,y,u)=f(u)$ with $f_u(0)>0$ being small enough and $\int_{0}^1 f(u)du$ being large enough, see the ZFK model \cite{Ve} arising in combustion theory. We call fronts with speed $c^*(e)>c_0(e)$ pushed fronts since these fronts are pushed by its main part, instead of being pulled by their exponential tail. The pulled and pushed terminology comes from the stability studies in the monostable case, see \cite{Rothe,Sto,van}. We refer to \cite{GGHR} for investigation of  the inside structure of traveling fronts through the pulled and pushed terminology. In the sequel, we drop $e$ in $c^*(e)$ and $c_0(e)$ for convenience if there is no confusion.
From \cite{H2008}, one knows that for each $c> c_0$, the set
$$F_c=\{\lambda\in (0,+\infty); k(\lambda)+c\lambda=0\},$$
is either a singleton or of two points and for $c=c_0$, the set $F_c$ is either empty or a singleton. Roughly speaking, this is because $k(\lambda)$ is concave and satisfies $k(0)=\mu_0$.

 The following theorem gives the exact exponential behavior of the pushed front $\phi(x\cdot e -c^* t,x,y)$ with the minimal speed $c^{*}>c_0$ as $x\cdot e-c^* t\rightarrow +\infty$.

\begin{theorem}\label{Th1}
Assume that there is a pulsating front $\phi_{c^*}(x\cdot e- c^*t,x,y)$ of \eqref{RD} with $c^*>c_0$. Then there exist $0<\lambda_*<\lambda_{*}^+<+\infty$ such that they are roots of $k(\lambda)+c^{*}\lambda=0$ and there exists $B>0$ such that
\be\label{exprate}
\phi_{c^*}(s,x,y)\sim B e^{-\lambda_*^+ s}\varphi_{*}^+(x,y)\hbox{ as } s\rightarrow +\infty,
\ee
uniformly in $(x,y)\in\overline{\Omega}$, where $\varphi_{*}^+(x,y)$ is the principal eigenfunction of $L_{\lambda_*^+} \varphi=k(\lambda_*^+) \varphi$ with $\|\varphi_*^+\|_{L^{\infty}(\mathcal{C})}=1$.
\end{theorem}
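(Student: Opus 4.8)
The plan is to combine the known monotonicity of $\phi$ with a careful analysis of the linear operators $L_\lambda$ near the two roots $\lambda_* < \lambda_*^+$ of $k(\lambda)+c^*\lambda=0$. First I would record the facts about $F_{c^*}$: since $c^*>c_0$, Proposition~1.2 of \cite{H2008} together with the convexity/monotonicity properties of $k(\lambda)$ guarantees that $F_{c^*}$ is a two-point set $\{\lambda_*,\lambda_*^+\}$ with $0<\lambda_*<\lambda_*^+$, and at each of these points the principal eigenvalue $k(\lambda)$ has the associated positive periodic principal eigenfunction $\varphi_*$, $\varphi_*^+$ on $E_\lambda$. Setting $v(s,x,y)=e^{\lambda_*^+ s}\phi(s,x,y)/\varphi_*^+(x,y)$ (or working directly with $\phi$), one sees that $\psi(x,y)e^{-\lambda_*^+ s}\cdot(\text{const})$ and $\psi(x,y)e^{-\lambda_* s}\cdot(\text{const})$ are exact solutions of the linearized equation at $0$; the goal is to show $\phi$ is asymptotic to the \emph{slower-decaying} of these, namely the $\lambda_*^+$ one (note $\lambda_*^+$ is the larger exponent, so $e^{-\lambda_*^+ s}$ decays faster — one must check the paper's sign conventions here; the claim \eqref{exprate} says the front picks up the $\lambda_*^+$ rate, which is the pushed-front signature).

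The core of the argument is a two-sided exponential bound followed by a Liouville-type rigidity step. For the upper bound I would show $\phi(s,x,y)\le C e^{-\lambda_* s}$ for $s$ large: test functions of the form $\overline{\phi}=A e^{-\lambda_* s}\varphi_*(x,y)$ are (sub/super)solutions of the full nonlinear equation for $s$ large once $A$ is large enough, using that $f(x,y,u)\le f_u(x,y,0)u + o(u)$ as $u\to 0^+$ by $C^{1,\alpha}$ regularity of $f$ in $u$ — so near $0$ the nonlinearity is controlled by its linearization even without the global KPP inequality \eqref{KPP}; then apply the sliding/comparison method on a half-space $\{s\ge s_0\}$. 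For the lower bound one shows $\phi(s,x,y)\ge c\,e^{-\lambda_*^+ s}\varphi_*^+(x,y)$, which follows because $\phi$ is a positive solution and $e^{-\lambda_*^+ s}\varphi_*^+$ is the "minimal" admissible exponential mode — here is where $c^*>c_0$ (strict) is essential: it rules out the degenerate resonant case where a polynomial-times-exponential correction appears, and it forces genuine exponential (not slower) decay. With $\lambda_*\le$ effective decay rate $\le \lambda_*^+$ established, I would then upgrade to the exact asymptotic: set $w(s,x,y)=e^{\lambda_*^+ s}\phi(s,x,y)$, show $w$ is bounded and bounded below by the lower estimate, derive that $w$ satisfies a parabolic-type equation (in the moving frame) whose zeroth-order perturbation decays exponentially, and invoke a spectral/ODE argument — expanding in the principal eigenfunction $\varphi_*^+$ and controlling the rest of the spectrum of $L_{\lambda_*^+}$ — to conclude $w(s,x,y)\to B\varphi_*^+(x,y)$ as $s\to+\infty$ for some constant $B$, with $B>0$ by the lower bound. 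The uniformity in $(x,y)$ comes from parabolic estimates and the compactness of the periodicity cell $\mathcal{C}$.

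The main obstacle, and the reason this case was "left open", is the lower bound / non-degeneracy step: without the KPP assumption \eqref{KPP} one cannot directly compare $\phi$ from below with a linear exponential over the whole profile, and one must genuinely exploit that the speed is exactly the minimal pushed speed $c^*$ and strictly above $c_0$. Concretely, the subtlety is excluding the possibility that $\phi$ decays like $e^{-\lambda_* s}$ (the faster... I mean the other root) or like $s\,e^{-\lambda s}$ at a double root: for a pushed front the decay must be at the rate $\lambda_*^+$, and proving this requires a delicate argument — likely a contradiction argument combining the variational characterization of $c^*$ (via \cite{H2008}'s construction of the minimal-speed front as a limit), the monotonicity $\phi_s<0$, and a sliding argument showing that a strictly faster decay would allow constructing a front with speed below $c^*$, contradicting minimality. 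I would carry out this step by assuming $\limsup_{s\to\infty} e^{\lambda s}\phi(s,\cdot) <\infty$ for some $\lambda>\lambda_*^+$ (or the resonant borderline) and deriving such a contradiction, after which the two-sided bounds pin the rate to $\lambda_*^+$ exactly and the spectral argument finishes the asymptotic \eqref{exprate}.
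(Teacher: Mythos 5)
There is a genuine gap at the heart of your proposal, and it sits exactly where the paper's new content lies. Since $\lambda_*<\lambda_*^+$, the mode $e^{-\lambda_* s}\varphi_*$ is the \emph{slower}-decaying one, and the case that was left open is precisely the possibility that $\phi$ behaves like $B e^{-\lambda_* s}\varphi_*(x,y)$ at $+\infty$; your write-up wavers on which root must be excluded and, in the concrete step, you propose to assume $\limsup_{s\to+\infty}e^{\lambda s}\phi<\infty$ for some $\lambda>\lambda_*^+$ (i.e.\ decay \emph{faster} than $e^{-\lambda_*^+ s}$) and contradict minimality by a ``sliding argument''. That is not the right contradiction hypothesis, and no mechanism is offered: sliding/comparison by itself cannot manufacture a pulsating front of speed below $c^*$. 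The paper's Step~1 instead assumes the $e^{-\lambda_* s}$ behavior, upgrades it (Lemma~2.1) to an expansion $\phi=Be^{-\lambda_* s}\varphi_*+O(e^{-rs})$ with $r=\min\{(1+\alpha)\lambda_*,(\lambda_*+\lambda_*^+)/2\}$, and then perturbs the \emph{speed}: it seeks a solution with speed $c^*-\tau$ of the form $\phi+(Be^{-\lambda_{c^*-\tau}s}\varphi_{c^*-\tau}-Be^{-\lambda_* s}\varphi_*)h(s+\theta)+\omega$ and solves for $\omega$ by the implicit function theorem in the weighted space $L^2_\rho$, $\rho=1+e^{2rs}$. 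This requires proving that the linearization $\mathcal{L}\omega=\mathcal{L}_{c^*}\omega-f_u(x,y,\phi)\omega$ is invertible on $D(\mathcal{L})\subset L^2_\rho$ (trivial kernel via a sliding argument against $\phi_s$ plus the strong maximum principle and the stability of $1$; closed range via Xin-type energy estimates). The resulting front-like solution with speed $c^*-\tau<c^*$ contradicts the definition of $c^*$. None of this functional-analytic machinery, which is the crux, appears in your outline.

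Two further points. First, you assert at the outset that $F_{c^*}$ is a two-point set; this is not known a priori (for $c>c_0$ it may be a singleton), and the paper must prove the existence of $\lambda_*^+>\lambda_*$ as part of the argument, excluding the singleton case by Hamel's Proposition~2.2 (which gives $-\phi_s/\phi\to\lambda\in F_{c^*}$) once the upper bound $\phi\le \overline{B}e^{-\lambda_*^+ s}\varphi_*^+$ is in hand. Second, your final step (``expand in $\varphi_*^+$ and control the rest of the spectrum of $L_{\lambda_*^+}$'') is only a gesture; the paper instead obtains the two-sided bounds and the exact asymptotics $B=B'$ by an iterative comparison scheme with explicit super- and subsolutions of the form $\sum_i B_i e^{-\lambda_*^+ s}\varphi_*^+ +\widetilde{\theta}e^{-\lambda_* s}\varphi_* +\theta e^{-rs}\varphi_r$ on expanding slabs, exploiting only $|f(x,y,u)-f_u(x,y,0)u|\le\delta u^{1+\alpha}$. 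Your upper bound at rate $e^{-\lambda_* s}$ (Hamel's Proposition~4.3, no KPP needed) and the general two-sided-bound-plus-rigidity outline are fine, but without the speed-perturbation argument the decisive exclusion of the $e^{-\lambda_* s}$ behavior is missing, so the proof does not go through as proposed.
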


\begin{remark}
From this result and Theorem~1.3 of \cite{H2008}, one can get a fact that if there is a pulsating front $\phi_{c^*}(x\cdot e- c^*t,x,y)$, the KPP assumption~\eqref{KPP} means $c^*=c_0$, where $c^*$ is defined by \eqref{c-star}. This implies that if we assume $c^*>c_0$, the KPP assumption \eqref{KPP} can not hold.
\end{remark}

 By \eqref{logeq}, one knows a logarithmic equivalent of the exponential behavior approaching to $0$ of $\phi(x\cdot e-ct,x,y)$ with $c>c^*$ as $x\cdot e -ct\rightarrow +\infty$. However, from the proof of Theorem~\ref{Th1}, we can make the logarithmic equivalent more precise to be  the exponential behavior.

\begin{theorem}\label{Th2}
Assume that  $\phi(x\cdot e- ct,x,y)$ is a pulsating front of \eqref{RD} with $c>c^*>c_0$. Then there is $B>0$ such that 
$$
\phi_c(s,x,y)\sim B e^{-\lambda_c s}\varphi_{\lambda_c}(x,y)\hbox{ as } s\rightarrow +\infty,
$$
uniformly in $(x,y)\in\overline{\Omega}$, where $\lambda_c=\min F_c$ and $\varphi_{\lambda_c}(x,y)$ 
is the principal eigenfunction of $L_{\lambda_c} \varphi=k(\lambda_c) \varphi$ with $\|\varphi_{\lambda_c}\|_{L^{\infty}(\mathcal{C})}=1$.
\end{theorem}

To compare the results in \cite{H2008} and above results, we refer to Tables~1 and 2. One can see that there is still one case remained unclear, that is, the exponential behavior of $\phi_c$ when $c=c_0$ with general assumption. In this case, the exponential behavior of $\phi_c$ could be either 
$B s^{2m+1} e^{-\lambda_{c_0} s}\psi_{\lambda_{c_0}}$ or $B e^{-\lambda_{c_0} s}\psi_{\lambda_{c_0}}$.

\begin{table}[ht]
\centering
\begin{tabular}{|c|c|c|}
\hline 
\multicolumn{2}{|c|}{With KPP assumption ($c^*=c_0$)} \\ \cline{1-2}
If $c>c^*=c_0$ & $\phi_c(s,x,y)\sim B e^{-\lambda_c s} \psi_{\lambda_c}$ as $s\rightarrow +\infty$\\ 
\hline
If $c=c^*=c_0$ & $\phi_c(s,x,y)\sim B s^{2m+1} e^{-\lambda_{c_0} s}\psi_{\lambda_{c_0}}$ as $s\rightarrow +\infty$\\
\hline
\multicolumn{2}{|c|}{Without KPP assumption} \\ \cline{1-2}
If $c>c^*\ge c_0$ & $\ln \phi_c(s,x,y)\sim \lambda_c s$ as $s\rightarrow +\infty$\\ 
\hline
If $c=c^*=c_0$ & $\ln \phi_{c}(s,x,y)\sim \lambda_{c_0} s$ as $s\rightarrow +\infty$\\
\hline
\end{tabular}
\caption{Results in \cite{H2008}.}
\end{table}

\begin{table}[ht]
\centering
\begin{tabular}{|c|c|c|}
\hline 
\multicolumn{2}{|c|}{General assumption} \\ \cline{1-2}
If $c>c^*\ge c_0$ & $\phi_c(s,x,y)\sim B e^{-\lambda_c s} \psi_{\lambda_c}$ as $s\rightarrow +\infty$\\ 
\hline
If $c=c^*>c_0$ & $\phi_c(s,x,y)\sim B e^{-\lambda_*^+ s}\varphi_{*}^+$ as $s\rightarrow +\infty$\\
\hline
\end{tabular}
\caption{Results in Theorems~\ref{Th1} and \ref{Th2}.}
\end{table}

Indeed, as soon as we have the exponential behavior of the pulsating front $\phi_c$, we can immediately have the uniqueness from Theorem~2.2 of \cite{HR2011}.

\begin{theorem}
If $U_1(t,x,y)=\phi_1(x\cdot e -ct,x,y)$ and $U_2(t,x,y)=\phi_1(x\cdot e -ct,x,y)$ are two pulsating fronts with either $c=c^*>c_0$ or $c>c^*>c_0$, then there exists $\sigma\in \R$ such that
$$\phi_1(s,x,y)=\phi_2(s+\sigma,x,y) \hbox{ for all $(s,x,y)\in \R\times\overline{\Omega}$},$$
that is, there exists $\tau\in \R$ $(\tau=-\sigma/c)$ such that
$$U_1(t,x,y)=U_2(t+\tau,x,y) \hbox{ for all $(t,x,y)\in \R\times\overline{\Omega}$}.$$
\end{theorem}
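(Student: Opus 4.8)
The plan is to deduce uniqueness directly from the sharp exponential behavior established in Theorem~\ref{Th1}, combined with the sliding method and a comparison argument in the spirit of Theorem~2.2 of \cite{HR2011}. First I would record the three ingredients that are now available for a minimal-speed pushed front $\phi(s,x,y)$: (i) strict monotonicity $\phi_s(s,x,y)<0$ (from \cite{H2008}); (ii) the precise asymptotics $\phi(s,x,y)\sim Be^{-\lambda_*^+ s}\varphi_*^+(x,y)$ as $s\to+\infty$, uniformly in $(x,y)$, from Theorem~\ref{Th1}; and (iii) by interior parabolic estimates applied to the equation satisfied by $-\phi_s$, the companion asymptotics $-\phi_s(s,x,y)\sim \lambda_*^+ Be^{-\lambda_*^+ s}\varphi_*^+(x,y)$ as $s\to+\infty$, together with the behavior $\phi\to 1$ at $-\infty$ at an exponential rate governed by $\mu_1>0$ (via the eigenfunction $\psi_1$ of \eqref{mu1}), and exponential decay of $\phi_s$ at $-\infty$ as well.

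Next I would set up the sliding argument. Given two pushed fronts $\phi_1$ and $\phi_2$ (both of speed $c^*$), the key point is that near $s=+\infty$ both have the \emph{same} exponential rate $\lambda_*^+$ and the same profile $\varphi_*^+$; hence $\phi_1(s,\cdot)/\phi_2(s,\cdot)\to B_1/B_2$ as $s\to+\infty$, uniformly in $(x,y)$. Choosing $\sigma$ so that $B_1 = B_2 e^{-\lambda_*^+\sigma}$ (equivalently, matching the leading coefficients after a shift), one gets that the shifted profile $\phi_2(\cdot+\sigma,\cdot)$ has exactly the same leading-order behavior as $\phi_1$ at $+\infty$. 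Then, for every $\epsilon>0$ there is $S_\epsilon$ with $\phi_1(s,x,y)\le (1+\epsilon)\phi_2(s+\sigma,x,y)$ for $s\ge S_\epsilon$; on the remaining region $s\le S_\epsilon$, since $\phi_2(\cdot+\sigma,\cdot)$ stays bounded below by a positive constant there while $\phi_1<1$, a further shift by a large $\kappa\ge 0$ makes $\phi_1(s,x,y)\le \phi_2(s+\sigma+\kappa,x,y)$ for \emph{all} $(s,x,y)$. Because both $u_i(t,x,y)=\phi_i(x\cdot e-c^*t,x,y)$ solve the same parabolic equation \eqref{RD} and the shift in $s$ corresponds to a time shift, the comparison principle (using $\mu_0<0$, $\mu_1>0$ to handle the ends, exactly as in \cite{HR2011}) propagates the ordering.

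Then comes the decisive step: slide $\kappa$ down to its infimum $\kappa^*\ge 0$ and show $\kappa^*=0$. If $\kappa^*>0$, the two solutions are ordered with $\phi_1\le \phi_2(\cdot+\sigma+\kappa^*,\cdot)$ but not identically equal, so by the strong maximum principle the inequality is strict everywhere; one then wants to decrease $\kappa^*$ slightly, which is the usual obstruction in sliding arguments on unbounded domains. This is resolved precisely by the matched asymptotics at both ends: at $+\infty$, the equality of leading coefficients forces $\phi_1(s,x,y)/\phi_2(s+\sigma+\kappa^*,x,y)\to e^{\lambda_*^+\kappa^*}>1$, giving strict room; at $-\infty$, both profiles converge to $1$ at the $\psi_1$-rate, again giving uniform room; on the central compact region the strict inequality plus $\phi_{2,s}<0$ gives room by compactness. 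Hence $\kappa^*$ is not minimal unless $\kappa^*=0$. The same argument with the roles of $\phi_1,\phi_2$ reversed gives $\phi_2(\cdot+\sigma,\cdot)\le\phi_1$, so $\phi_1(s,x,y)=\phi_2(s+\sigma,x,y)$; setting $\tau=-\sigma/c^*$ yields $U_1(t,x,y)=U_2(t+\tau,x,y)$.

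I expect the main obstacle to be the bookkeeping at the unstable end $s\to+\infty$: one must be sure that the \emph{second-order} term in the asymptotic expansion of $\phi$ (the contribution of the slower rate $\lambda_*$, or of a possible polynomial correction) does not spoil the strict ordering obtained from the leading term, and that Theorem~\ref{Th1} is strong enough — as stated, it gives $\phi\sim Be^{-\lambda_*^+s}\varphi_*^+$, and together with $-\phi_s\sim \lambda_*^+ Be^{-\lambda_*^+s}\varphi_*^+$ from parabolic estimates this suffices, since any ordering failure would contradict the uniqueness of the leading coefficient. Once that is in hand, the rest is a routine adaptation of the sliding/comparison machinery of \cite[Theorem~2.2]{HR2011}, which was written for the KPP (pulled) case but only used the sharp exponential decay at $0$ — exactly what Theorem~\ref{Th1} now supplies in the pushed case.
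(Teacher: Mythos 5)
Your proposal is correct and follows essentially the same route as the paper: the paper derives the decay of $\phi$ from Theorem~\ref{Th1}, gets the matching decay of $-\phi_s$ from standard parabolic estimates, and then simply invokes Theorem~2.2 of \cite{HR2011}, which is exactly the sliding/comparison machinery you unpack. One bookkeeping caution: since $\phi_{2,s}<0$, global domination is obtained by shifting $\phi_2$ to the \emph{left}, i.e. $\phi_1(s,\cdot)\le\phi_2(s+\sigma-\kappa,\cdot)$ with $\kappa\ge 0$ large, so with your $+\kappa$ convention the asserted limit $\phi_1/\phi_2(\cdot+\sigma+\kappa^*)\to e^{\lambda_*^+\kappa^*}>1$ would contradict the assumed ordering outright rather than ``give room''; once the slide direction is corrected this becomes the intended statement that the matched rate $\lambda_*^+$ leaves strict room at $+\infty$ whenever $\kappa^*>0$, and the rest of your argument goes through as in \cite{HR2011}.
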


Now, we concern the global stability of the pushed front. Here, we only consider that $x$ is in one dimension, that is, $d=1$. The following result shows that the solution of the Cauchy problem \eqref{RD} converges to a pushed front once the initial value $u_0(x,y)$ is front-like and decays as fast as $e^{-r x}$ as $x\rightarrow +\infty$, where $r>\lambda_*$ and $\lambda_*$ is the minimal root of $k(\lambda) +c^* \lambda=0$. For $x$ being in high dimensional spaces, the solution of the Cauchy problem with the front-like initial value $u_0(x,y)$ decaying as $e^{-r x\cdot e}$ as $x\cdot e\rightarrow +\infty$ can not always be expected to converge to a shift of $\phi_{c^*}(x\cdot e-c^* t,x,y)$ in general and oscilating solutions between two shifts of $\phi_{c^*}(x\cdot e-c^* t,x,y)$ may appear as in the bistable reaction case, see \cite{MNT} for bistable reactions.

\begin{theorem}\label{Th3}
For $d=1$, assume that there is a pulsating front $U(t,x,y)=\phi(x- c^*t,x,y)$ with $c^{*}>c_0$. There exists $\varepsilon_0>0$ such that if $u_0(x,y)\in [0,1]$ satisfies
$$\limsup_{\xi\rightarrow -\infty} \sup_{(x,y)\in\overline{\Omega},\ x\le \xi}(1-u_0(x,y))\le \varepsilon_0,$$
and for $r>\lambda_{*}$
$$\limsup_{\xi\rightarrow +\infty} \sup_{(x,y)\in\overline{\Omega},\ x\ge \xi} e^{r x}u_0(x,y)<+\infty,$$
then there exist $\tau\in \R$ such that
$$u(t,x,y)\rightarrow U(t+\tau,x,y) \hbox{ as $t\rightarrow +\infty$ uniformly in $\overline{\Omega}$}.$$
\end{theorem}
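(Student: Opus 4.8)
The plan is to prove Theorem~\ref{Th3} by the classical sub-- and supersolution method followed by a squeezing (sliding) argument, in the lineage of Fife--McLeod and of its pulsating--front version in \cite{HR2011}; what makes the non--KPP case tractable is the sharp exponential decay of $\phi$ supplied by Theorem~\ref{Th1}. Work in the moving variable $s=x-c^*t$, and for $\lambda>0$ write $\varphi_\lambda>0$ for the principal eigenfunction of $L_\lambda\varphi=k(\lambda)\varphi$ with $\|\varphi_\lambda\|_{L^\infty(\mathcal{C})}=1$. Since $c^*>c_0=\inf_{\lambda>0}(-k(\lambda)/\lambda)$ and $\lambda_*<\lambda_*^+$ are the two roots of $k(\lambda)+c^*\lambda=0$, one has $k(\lambda)+c^*\lambda>0$ on $(\lambda_*,\lambda_*^+)$; diminishing $r$ if necessary (which only weakens the hypothesis on $u_0$) we may assume $\lambda_*<r<\lambda_*^+$, so that $\kappa_r:=k(r)+c^*r>0$ and $u_0(x,y)\le C_0e^{-rx}$ for $x$ large. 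I will use freely: $0\le u\le1$ (comparison with the steady states $0,1$); $\phi_s<0$ on $\R\times\overline{\Omega}$ (monotonicity, \cite{H2008}); that $\phi$ and $-\phi_s$ decay exactly like $e^{-\lambda_*^+ s}$ as $s\to+\infty$ uniformly in $(x,y)$ (Theorem~\ref{Th1} and interior parabolic estimates), while $1-\phi$ decays exponentially as $s\to-\infty$ ($\mu_1>0$); and $f_u(x,y,u)\le-\beta_1<0$ for $u$ near $1$, $|f(x,y,u)-f_u(x,y,0)u|+|f_u(x,y,u)-f_u(x,y,0)|\le C_1u^{\alpha}$ for $u$ near $0$.

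\emph{Trapping from above.} I would first build a supersolution of the form
\[
\overline u(t,x,y)=\min\Bigl\{1,\ \phi\bigl(x-c^*t-\xi_+(t),x,y\bigr)+\delta\,e^{-\beta t}\,e^{-r(x-c^*t)}\,\varphi_r(x,y)\Bigr\},
\]
with $\xi_+(t)=\xi_+^\infty-\sigma e^{-\beta t}$ nondecreasing and $0<\beta<\kappa_r$. Since $1$ is a solution and minima of supersolutions are supersolutions, it suffices to verify the supersolution inequality for the bracketed sum, which one does by splitting $\overline{\Omega}\times(0,\infty)$ according to the size of $\phi(x-c^*t-\xi_+(t),x,y)$: where it is close to $1$ one uses $f_u\le-\beta_1$; in the bounded transition zone the term $-\xi_+'(t)\phi_s>0$ dominates the error terms, which are $O\bigl(\delta e^{-\beta t}e^{-r\xi_+^\infty}\bigr)$ and hence negligible once $\xi_+^\infty$ is large; and where $\phi(x-c^*t-\xi_+(t),x,y)$ is small the leading balance is $(\kappa_r-\beta)\,w-C_1(\phi+w)^{\alpha}w\ge0$ with $w=\delta e^{-\beta t}e^{-r(x-c^*t)}\varphi_r$ small. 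One then chooses $\xi_+^\infty$ large, then $\sigma$, then $\delta\ge C_0/\min_{\mathcal{C}}\varphi_r$, so that $\overline u(0,\cdot)\ge u_0$ (using $u_0\le1$ where $\overline u(0,\cdot)=1$ and $u_0\le C_0e^{-rx}\le w(0,\cdot)$ on the remaining right half--line). The parabolic comparison principle gives $u\le\overline u$, and since $\xi_+(t)\le\xi_+^\infty$ and $\phi(\cdot,x,y)$ decreases,
\[
\limsup_{t\to+\infty}\ \sup_{(x,y)\in\overline{\Omega}}\bigl(u(t,x,y)-\phi(x-c^*t-\xi_+^\infty,x,y)\bigr)\le0 .
\]

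\emph{Trapping from below.} Because $\mu_0<0$, positive data invade; combining a standard spreading estimate with $u_0\ge1-2\varepsilon_0$ on a left half--line (with $\varepsilon_0$ smaller than the basin of attraction of $1$) one obtains, as in the front--like Cauchy theory of \cite{H2008,HR2011}, a time $T_0>0$ and a shift $\xi_-^0$ with $u(T_0,\cdot)\ge\phi(x-c^*T_0-\xi_-^0,x,y)$ on $\overline{\Omega}$: for $T_0$ large the region where $u\approx1$ has invaded past any prescribed point, and near $+\infty$ the positivity of $u(T_0,\cdot)$ together with a subsolution of the linearization at $0$ give the needed lower bound. Starting at $t=T_0$ from this ordering one runs the mirror construction, a subsolution $\underline u(t,x,y)=\max\{0,\ \phi(x-c^*t-\xi_-(t),x,y)-\delta e^{-\beta t}q(x-c^*t,x,y)\}$ with $\xi_-(t)=\xi_-^\infty+\sigma e^{-\beta t}$ nonincreasing and $q>0$ a fixed profile chosen comparable to $e^{-rs}$ as $s\to+\infty$ and to $1-\phi(s,\cdot)$ as $s\to-\infty$ (so the correction is negligible against $-\phi_s$ at both ends and $\underline u$ is truncated to $0$ where $\phi(\cdot-\xi_-(t),x,y)$ is smaller than the correction). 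The verification is region--by--region as above, now with $0$ a solution and $-\xi_-'(t)\phi_s\le0$, and the ordering at $t=T_0$ holds for suitable $\xi_-^\infty,\sigma,\delta$. Comparison gives $u\ge\underline u$, hence
\[
\liminf_{t\to+\infty}\ \inf_{(x,y)\in\overline{\Omega}}\bigl(u(t,x,y)-\phi(x-c^*t-\xi_-^\infty,x,y)\bigr)\ge0 ,
\]
so $u(t,\cdot)$ is asymptotically squeezed between the two shifts $\phi(\cdot-\xi_-^\infty,x,y)$ and $\phi(\cdot-\xi_+^\infty,x,y)$, with $\xi_-^\infty\le\xi_+^\infty$.

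\emph{Closing the gap.} It remains to force $\xi_-^\infty=\xi_+^\infty$. Fix a small $\rho_0>0$; for $t$ large let $\zeta^+(t)$ be the least $\zeta$ with $u(t,\cdot)\le\phi(\cdot-\zeta,x,y)+\rho_0$ on $\overline{\Omega}$ and $\zeta^-(t)$ the greatest $\zeta$ with $u(t,\cdot)\ge\phi(\cdot-\zeta,x,y)-\rho_0$; by the two previous steps these are finite and $D(t):=\zeta^+(t)-\zeta^-(t)\ge0$ is bounded. The comparison principle (applied to the relaxation of the overshoots $\phi(\cdot-\zeta,x,y)\pm\rho_0$) shows $D$ is nonincreasing; the strong maximum principle, applied to the difference of $u$ and the genuine solutions $\phi(\cdot-\zeta^\pm(t),x,y)$ — ordered, and strictly so on compact sets unless $u$ is already a shift of $\phi$ — creates a definite interior margin at each unit step, and feeding that margin back into the supersolution of the first step, started now from $u$ itself, improves the admissible shift by a fixed fraction of $D(t)$; this yields $D(t+1)\le\theta D(t)+C\rho_0$ with $\theta\in(0,1)$ independent of $\rho_0$. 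Letting $t\to\infty$ and then $\rho_0\to0$ forces $D(t)\to0$ and identifies a single $\sigma$ with $u(t,x,y)\to\phi(x-c^*t-\sigma,x,y)=U(t+\tau,x,y)$ uniformly in $\overline{\Omega}$ ($\tau=\sigma/c^*$), which is the claim. I expect this contraction estimate $D(t+1)\le\theta D(t)+C\rho_0$ to be the main obstacle: it requires controlling the nonlinear cross terms in the unbounded periodic moving frame and checking that the correction profile is simultaneously compatible with the decay $e^{-\lambda_*^+ s}$ of $\phi$ near $u=0$ — this is exactly where $r<\lambda_*^+$, i.e.\ the \emph{pushed} steepness, is used — and with the stable behaviour near $u=1$; equivalently, it is the statement that the linearization of \eqref{RD} about $\phi$ has a spectral gap in the exponentially weighted space, the analytic content of the front being pushed rather than pulled. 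The secondary delicate point is the initial ordering in the second step, where the smallness of $\varepsilon_0$ and the rate $r>\lambda_*$ of $u_0$ are used.
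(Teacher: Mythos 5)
Your two trapping steps are essentially the paper's Lemma~\ref{subsup-s} (sub/supersolutions of the form $\phi(x-c^*t\mp\xi(t),x,y)\pm$ an exponentially decaying correction, with the shift $\xi(t)$ relaxing exponentially), so that part of the plan matches the paper. The genuine gap is the closing step. The contraction $D(t+1)\le\theta D(t)+C\rho_0$ is exactly where the theorem lives, and it is asserted rather than proved; moreover the mechanism you sketch cannot work as described in a monostable problem. First, $\phi(\cdot-\zeta,x,y)\pm\rho_0$ are not super/subsolutions near the unstable state (adding a constant ruins the sign where $\phi\approx0$ since $0$ is linearly unstable), so your $\zeta^\pm(t)$ and the monotonicity of $D$ are not controlled by the comparison principle. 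Second, the ``interior margin'' produced by the strong maximum principle lives on compact sets of the moving frame, while translating the front by a fixed fraction of $D$ multiplies its tail $Be^{-\lambda_*^+s}\varphi_*^+$ by a constant; to absorb that change into an admissible correction, which must decay like $e^{-rs}$ with $r<\lambda_*^+$, one needs to know that $u(t,\cdot)$ itself has the exact tail $De^{-\lambda_*^+s}\varphi_*^+(x,y)$ --- which is precisely the content of the paper's Liouville-type Proposition~\ref{Lio}, not a consequence of your margin argument, and no weighted spectral gap for the linearization about $\phi$ is established anywhere (nor is it needed in the paper). The paper closes the argument differently: it translates along $t_n=nL_1/c^*$ (so the frame moves by one spatial period), extracts an entire solution trapped between two shifts of $\phi$, proves via Theorem~\ref{Th1} and Lemma~\ref{phisphi} that such a trapped solution has exact decay $De^{-\lambda_*^+s}\varphi_*^+$, which pins the shift to $\tau_*=\ln(D/B)/\lambda_*^+$, and concludes by a sliding argument that cannot get stuck at $s=+\infty$ because the ratio tends to $1$ there; it then restarts the sandwich of Lemma~\ref{subsup-s} at a large time $t_n$ with arbitrarily small amplitude $\varepsilon$ to get uniform convergence. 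Any repair of your scheme would in effect require proving Proposition~\ref{Lio} anyway.

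A secondary but real problem: you propose to use freely the pointwise bound $f_u(x,y,u)\le-\beta_1<0$ for $u$ near $1$, but the standing hypothesis is only $\mu_1>0$ in \eqref{mu1}, a principal-eigenvalue condition that does not imply pointwise negativity of $f_u(\cdot,\cdot,1)$. In the periodic setting the good sign near $u=1$ must be produced by the eigenfunction: this is why the paper's correction $g$ is glued to $\psi_1(x,y)$ as $s\to-\infty$ and to $e^{-\lambda s}\varphi_\lambda$ (with $\lambda_*<\lambda<\min\{r,\lambda_*^+\}$) as $s\to+\infty$. Your supersolution correction $\delta e^{-\beta t}e^{-r(x-c^*t)}\varphi_r$ alone does not achieve this: on the part of the set $\{\overline u<1\}$ where $\phi$ is close to $1$, the verification would require a lower bound like $k(r)+c^*r-\beta+f_u(x,y,0)-f_u(x,y,\theta)\ge0$, which can fail; the glued profile should be used in both the super- and the subsolution. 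Finally, the separate invasion step at time $T_0$ is unnecessary: with $\varepsilon_0=\rho\min_{\overline{\Omega}}\psi_1$ the hypothesis on $u_0$ already yields the initial ordering at $t=0$ after translating the subsolution far to the right, as in Step~1 of Lemma~\ref{subsup-s}. These points are fixable; the missing Liouville-type step is the essential gap.
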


Notice that the global stability of a pulled front requires the initial value $u_0(x,y)$ decays as the same rate as the pulled front itself and the convergence does not have a shift in time, see \cite{HR2011}. While, for a pushed front, the decaying rate of the initial value $u_0(x,y)$ could be wider and a shift in time in the convergence occurs in general. 

We organize this paper as following. In Section~2, we prove Theorem~\ref{Th1} and Theorem~\ref{Th2}. Section~3 is devoted to the proof of stability of $\phi(x-c^*t,x,y)$.

%%%%%%%%%%%%%%%%%%%%%%%%%%%%%%%%%%%%%%%%%%%%%%%

\section{Exponential decaying rate of $\phi(s,x,y)$ as $s\rightarrow +\infty$}

This section is devoted to the proof of Theorems~\ref{Th1} and \ref{Th2}. 

\subsection{Preliminaries}
We first recall a lemma of \cite{H2008} for reader's convenience. 

\begin{lemma}\label{lemFc}
The function $k(\lambda)$ is analytic and concave in $\R$. For each $c>c_0$, the set 
$$F_c=\{\lambda\in (0,+\infty); k(\lambda)+c\lambda=0\}$$
is either a singleton or of two points.
\end{lemma}

Then, we prove a comparison principle.

\begin{lemma}\label{comparison}
Let $\overline{U}$ and $\underline{U}$ be respectively classical
supersolution and subsolution of
\begin{eqnarray*}
\left\{\begin{array}{lll}
\overline{U}_t-\nabla \cdot (A(z)\nabla \overline{U})+q(z)\cdot \nabla \overline{U} \ge f(z,\overline{U}), &&t\in\R, z=(x,y)\in \overline{\Omega},\\
\nu A\nabla \overline{U}\ge 0, &&t\in\R, z\in\partial\Omega,
\end{array}
\right.
\end{eqnarray*}
and
\begin{eqnarray*}
\left\{\begin{array}{lll}
\underline{U}_t-\nabla \cdot (A(z)\nabla \underline{U})+q(z)\cdot \nabla \underline{U} \le f(z,\underline{U}), &&t\in\R, z=(x,y)\in \overline{\Omega},\\
\nu A\nabla \underline{U}\le 0, &&t\in\R, z\in\partial\Omega.
\end{array}
\right.
\end{eqnarray*}
Assume that $\overline{U}(t,x,y)=\overline{\phi}(x\cdot e -ct,x,y)$ and $\underline{U}(t,x,y)=\underline{\phi}(x\cdot e -ct,x,y)$ where $\overline{\phi}$ and $\underline{\phi}$ are periodic in $(x,y)$. If there are $\sigma_1<\sigma_2$ such that
\begin{eqnarray}\label{opup}
\left\{\begin{array}{lll}
\overline{\phi}(\sigma_1,x,y)\ge \underline{\phi}(\sigma_1,x,y), &&\hbox{ for all $(x,y)\in\overline{\Omega}$},\\
\overline{\phi}(\sigma_2,x,y)\ge \underline{\phi}(\sigma_2,x,y),  &&\hbox{ for all $(x,y)\in\overline{\Omega}$},\\
\overline{\phi}(s,x,y)>\underline{\phi}(\sigma_2,x,y),  &&\hbox{ for all $(x,y)\in\overline{\Omega}$ and $\sigma_1< s<\sigma_2$},\\
\overline{\phi}(\sigma_1,x,y)>\underline{\phi}(s,x,y),  &&\hbox{ for all $(x,y)\in\overline{\Omega}$ and $\sigma_1< s< \sigma_2$},
\end{array}
\right.
\end{eqnarray}
then
$$\overline{\phi}(s,x,y)\ge \underline{\phi}(s,x,y), \hbox{ for $\sigma_1\le s\le \sigma_2$ and $(x,y)\in \overline{\Omega}$},$$
that is $\overline{U}(t,x,y)\ge \underline{U}(t,x,y)$ for all $(t,x,y)\in\R\times\overline{\Omega}$ such that $\sigma_1\le x\cdot e-ct\le \sigma_2$.
\end{lemma}

\begin{proof}
Set 
$$\tau_*=\inf\{\tau\in\R; \overline{\phi}(s,x,y)\ge \underline{\phi}(s+\tau,x,y) \hbox{ for $\sigma_1\le s\le \sigma_2-\tau$ and $(x,y)\in\overline{\Omega}$}\}.$$
Notice that $0\le \tau_*\le \sigma_2-\sigma_1$ and one only needs to prove $\tau_*=0$. Assume that $\tau_*>0$. Then, since $\overline{\phi}$ and $\underline{\phi}$ is periodic in $(x,y)$, there are sequences $0\le \tau_n<\tau_*$ and $(s_n,x_n,y_n)\in [\sigma_1,\sigma_2-\tau_n]\times\mathcal{C}$ such that 
\be\label{snxnyn}
\tau_n\rightarrow \tau_* \hbox{ as $n\rightarrow +\infty$ and } \overline{\phi}(s_n,x_n,y_n)\le \underline{\phi}(s_n+\tau_n,x_n,y_n).
\ee
Then, by \eqref{opup}, there are $s_0\in (\sigma_1,\sigma_2-\tau_*)$ and $(x_0,y_0)\in \mathcal{C}$ such that $s_n\rightarrow s_0$ and $(x_n,y_n)\rightarrow (x_0,y_0)$ as $n\rightarrow +\infty$. By passing $n$ to $+\infty$ in \eqref{snxnyn} and definition of $\tau_*$, one has that
$$\overline{\phi}(s_0,x_0,y_0)=\underline{\phi}(s_0+\tau_*,x_0,y_0).$$

If $c=0$, then $\overline{U}(t,x,y)=\overline{U}(x,y)=\overline{\phi}(x\cdot e,x,y)$, $\underline{U}'(t,x,y)=\overline{U}'(x,y):=\overline{\phi}(x\cdot e+\tau_*,x,y)$ and $\overline{U}(x_0,y_0)=\underline{U}'(x_0,y_0)$. By applying the maximum principle for the elliptic equation, it follows that $\overline{\phi}(x\cdot e,x_0,y_0)\equiv \underline{\phi}(x\cdot e+\tau_*,x,y)$ which contradicts \eqref{opup}.
If $c\neq 0$, let $t_0=(x_0\cdot e-s_0)/c$ and $\underline{U}'(t,x,y):=\overline{\phi}(x\cdot e-ct +\tau_*,x,y)$. Then, $\overline{U}(t_0,x_0,y_0)=\underline{U}'(t_0,x_0,y_0)$. By applying the maximum principle for the parabolic equation, it follows that $\overline{\phi}(x\cdot e -ct,x_0,y_0)\equiv \underline{\phi}(x\cdot e -ct +\tau_*,x,y)$ which also contradicts \eqref{opup}.

Thus, $\tau_*=0$ and it completes the proof.
\end{proof}

\subsection{Proof of Theorem~\ref{Th1}}
This subsection is devoted to the proof of Theorem~\ref{Th1}. In this subsection, we always assume that there is a pulsating front $\phi_{c^*}(x\cdot e-c^* t,x,y)$ of \eqref{RD} with $c^*>c_0$ and we simply denote $\phi_{c^*}$ by $\phi$. Since $c^*>c_0$, it follows from Lemma~\ref{lemFc} that the set $F_{c^*}$ is either a singleton or of two points. Define 
$$\lambda_c=\min\{\lambda>0; k(\lambda)+c\lambda=0\},$$
and $\lambda_*:=\lambda_{c^*}$.
The basic idea of proving \eqref{exprate} is to exclude the case of the  decaying rate $e^{-\lambda_* s}$. This can further deduce that $F_{c^*}$ is of two points, that is, there is a point $\lambda_*^+>\lambda_*$ in $F_{c^*}$ and the decaying rate can only be $e^{-\lambda_*^+ s}$ as $s\rightarrow +\infty$. This idea is inspired by \cite{R} for the reaction-diffusion-advection equation in cylinders. But since our case is spatially periodic, things become more complicated. 

Assume by contradiction that
\be\label{asy1}
0<\liminf_{s\rightarrow +\infty} \Big[\min_{(x,y)\in \overline{\Omega}}\Big(\frac{\phi(s,x,y)}{e^{-\lambda_* s}}\Big)\Big]\le \limsup_{s\rightarrow +\infty}\Big[\max_{(x,y)\in \overline{\Omega}}\Big(\frac{\phi(s,x,y)}{e^{-\lambda_* s}}\Big)\Big]<+\infty.
\ee
Notice that Proposition~4.3 of \cite{H2008} does not need the KPP assumption~\eqref{KPP}. It implies that once \eqref{asy1} is satisfies, one can follow the proof of Theorem~1.3 of \cite{H2008} that there exists $B>0$ such that 
\be\label{asy-phi}
\phi(s,x,y)\sim B e^{-\lambda_* s}\varphi_*(x,y), \hbox{ as $s\rightarrow +\infty$ uniformly in $(x,y)\in\overline{\Omega}$},
\ee
where $\varphi_*(x,y)$ is the unique positive principal eigenfunction of $L_{\lambda_*}\varphi=k(\lambda_*) \varphi$ such that $\|\varphi_{\lambda_*}\|_{L^{\infty}(\mathcal{C})}=1$.  One can further deduce that

\begin{lemma}\label{lemma2.1}
There exist positive constants $s_0$ and $\theta$ such that
$$\phi(s,x,y)\le B e^{-\lambda_* s}\varphi_*(x,y)+O(e^{-r s}), \hbox{ for $s\ge s_0$ and $(x,y)\in \overline{\Omega}$},$$
$$\phi_s(s,x,y)\sim -B\lambda_* e^{-\lambda_* s}\varphi_*(x,y) \hbox{ as $s\rightarrow +\infty$},$$
and
$$|\phi_s(s,x,y)|\le B\lambda_* e^{-\lambda_* s}\varphi_*(x,y)+O(e^{-r s}),  \hbox{ for $s\ge s_0$ and $(x,y)\in \overline{\Omega}$},$$
where $r=\min\{(1+\alpha)\lambda_*,(\lambda_*+\lambda_*^+)/2\}$ and $\alpha$ is defined by that $f(x,y,u)$ is of $C^{1,\alpha}$ with respect to $u$.
\end{lemma}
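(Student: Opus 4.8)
Substituting $u(t,x,y)=\phi(s,x,y)$ with $s=x\cdot e-c^*t$ into \eqref{RD} turns the equation into a degenerate elliptic equation $\mathcal{L}\phi=g(x,y,\phi)$ on $\R\times\overline{\Omega}$, with the natural conormal boundary condition on $\partial\Omega$, where $\mathcal{L}$ is the linearisation at $0$ of the moving--frame operator --- so that $\mathcal{L}[e^{-\lambda s}\varphi_\lambda]=(k(\lambda)+c^*\lambda)e^{-\lambda s}\varphi_\lambda$ for the principal eigenfunction $\varphi_\lambda$ of $L_\lambda$, and in particular $e^{-\lambda_* s}\varphi_*\in\ker\mathcal{L}$ --- and $g(x,y,u):=f(x,y,u)-f_u(x,y,0)u$ satisfies $|g(x,y,u)|\le C|u|^{1+\alpha}$ and $|g_u(x,y,u)|\le C|u|^{\alpha}$ for small $|u|$, since $f$ is $C^{1,\alpha}$ in $u$. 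The exponent $r=\min\{(1+\alpha)\lambda_*,(\lambda_*+\lambda_*^+)/2\}$ obeys $\lambda_*<r\le(1+\alpha)\lambda_*$ and $r<\lambda_*^+$, so by the known properties of $k(\cdot)$ one has $\delta_r:=k(r)+c^*r>0$; let $\varphi_r>0$ be the principal eigenfunction of $L_r$. Finally \eqref{asy-phi} furnishes a fixed $s_1$ with $0<\phi(s,x,y)\le 2Be^{-\lambda_* s}\varphi_*(x,y)$ for $s\ge s_1$.

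For the first inequality, set $\zeta:=\phi-Be^{-\lambda_* s}\varphi_*$. Since $e^{-\lambda_* s}\varphi_*\in\ker\mathcal{L}$, the function $\zeta$ solves the \emph{linear} equation $\mathcal{L}\zeta=g(x,y,\phi)$, whose right-hand side is bounded by $C(2B\|\varphi_*\|_{\infty})^{1+\alpha}e^{-(1+\alpha)\lambda_* s}\le C_1e^{-rs}$ on $\{s\ge s_1\}$, while $\zeta=o(e^{-\lambda_* s})$ uniformly by \eqref{asy-phi}. I would compare $\zeta$ with $De^{-rs}\varphi_r$, a strict supersolution because $\mathcal{L}[De^{-rs}\varphi_r]=D\delta_r e^{-rs}\varphi_r$: choosing $D$ so large that $D\delta_r\min_{\mathcal{C}}\varphi_r\ge C_1$ and $De^{-rs_1}\min_{\mathcal{C}}\varphi_r\ge\sup_{\overline{\Omega}}\zeta(s_1,\cdot)$ --- two fixed finite conditions, which one can meet precisely because we linearised about $Be^{-\lambda_* s}\varphi_*$ rather than about $0$ --- the function $v:=\zeta-De^{-rs}\varphi_r$ satisfies $\mathcal{L}v\le 0$ for $s>s_1$, $v\le 0$ on $\{s=s_1\}$, the conormal condition on $\partial\Omega$, and $v=o(e^{-\lambda_* s})$. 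A Phragm\'en--Lindel\"of argument then gives $v\le 0$ on $\{s\ge s_1\}$: for each $\epsilon>0$, $v-\epsilon e^{-\lambda_* s}\varphi_*$ is still a subsolution, is $\le 0$ on $\{s=s_1\}$ and (as $v=o(e^{-\lambda_* s})$) for $s\ge s_\epsilon$, so the weak maximum principle for $\mathcal{L}$ on the bounded slab $\{s_1\le s\le s_\epsilon\}$ --- available after conjugating by the positive kernel element $e^{-\lambda_* s}\varphi_*$ --- forces $v\le\epsilon e^{-\lambda_* s}\varphi_*$ there, hence for all $s\ge s_1$; letting $\epsilon\to 0$ yields the first estimate with $s_0=s_1$. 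Running the same argument on $-\zeta$ gives also $\zeta\ge -De^{-rs}\varphi_r$.

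For the behaviour of $\phi_s$ I would use parabolic compactness. For large $\tau$ set $\Psi^\tau(s,x,y):=e^{\lambda_*\tau}\phi(s+\tau,x,y)$ on $[-1,1]\times\overline{\Omega}$; by \eqref{asy-phi}, $\Psi^\tau\to Be^{-\lambda_* s}\varphi_*$ in $C^0$. Each $\Psi^\tau$ solves the same linear part of the front equation (the coefficients are $s$-independent) with nonlinear term $e^{\lambda_*\tau}g(x,y,e^{-\lambda_*\tau}\Psi^\tau)=O(e^{-\alpha\lambda_*\tau})$ uniformly, and $\{\Psi^\tau\}$ is uniformly bounded, so interior and conormal-boundary parabolic Schauder estimates for the corresponding solutions of \eqref{RD} make $\{\Psi^\tau\}$ precompact in $C^{1}_{\mathrm{loc}}$, whence it converges to $Be^{-\lambda_* s}\varphi_*$ in $C^{1}_{\mathrm{loc}}$; differentiating in $s$ at $s=0$ gives $e^{\lambda_*\tau}\phi_s(\tau,\cdot)\to -B\lambda_*\varphi_*$ uniformly, the second assertion. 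For the third, by the known monotonicity $\phi_s<0$, so $|\phi_s|=-\phi_s$ and it suffices to bound $-\phi_s$ from above. Differentiating $\mathcal{L}\phi=g(x,y,\phi)$ in $s$ (legitimate since $\mathcal{L}$ has $s$-independent coefficients) yields $\mathcal{L}(-\phi_s)=g_u(x,y,\phi)(-\phi_s)$; treating the right-hand side as a known function, the two previous paragraphs bound it by $Ce^{-\alpha\lambda_* s}\cdot 2B\lambda_*\|\varphi_*\|_{\infty}e^{-\lambda_* s}\le C_2e^{-rs}$ on $\{s\ge s_1\}$. Hence $\omega:=-\phi_s-B\lambda_* e^{-\lambda_* s}\varphi_*$ solves $\mathcal{L}\omega$ equal to that forcing, with $\omega=o(e^{-\lambda_* s})$, and running the comparison of the second paragraph on $\omega$ and on $-\omega$ gives $|\omega|\le D'e^{-rs}\varphi_r$ for $s\ge s_1$, that is, $|\phi_s|\le B\lambda_* e^{-\lambda_* s}\varphi_*+O(e^{-rs})$.

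The step I expect to be the main obstacle is the Phragm\'en--Lindel\"of / weak-maximum-principle argument for the degenerate elliptic front operator $\mathcal{L}$ over the periodicity cell: this is exactly where the periodic geometry (unlike the cylinder in \cite{R}) forces one to carry along the principal eigenvalues $k(\lambda)$ and eigenfunctions $\varphi_\lambda$ of $L_\lambda$, and one must verify that conjugating by $e^{-\lambda_* s}\varphi_*$ together with the conormal boundary condition really makes the weak maximum principle applicable on slabs bounded in $s$. A subtler but essential point, taken care of by linearising about $Be^{-\lambda_* s}\varphi_*$, is that the amplitude $D$ must be chosen independently of any smallness threshold --- something which fails if one instead tries to build a supersolution of the full nonlinear equation around $0$. (When $F_{c^*}$ is a singleton one follows the paper's convention of enlarging $\lambda_*^+$; the argument above is written for the generic case $\lambda_*<\lambda_*^+<+\infty$.)
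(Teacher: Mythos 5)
Your argument is correct and is essentially the paper's: the first estimate is obtained by exactly the same device (a supersolution $\theta e^{-rs}\varphi_r$ built on $k(r)+c^*r>0$ and $|f(\cdot,u)-f_u(\cdot,0)u|\le\delta u^{1+\alpha}$, corrected by a vanishing multiple of the kernel element $e^{-\lambda_* s}\varphi_*$ and compared on expanding slabs), your $\epsilon e^{-\lambda_* s}\varphi_*$ playing the role of the paper's $\varepsilon_n e^{-\lambda_* s}\varphi_*$ in $\overline{v}_n$. The only cosmetic differences are that the paper runs the comparison in the parabolic moving frame for $V=\phi-Be^{-\lambda_* s}\varphi_*$ (which also sidesteps the degenerate-elliptic maximum-principle issue you flag) and gets the $\phi_s$ statements from standard parabolic estimates applied to $V$, whereas you obtain them via a scaling/compactness argument and by differentiating the equation in $s$; both routes are sound, and your explicit two-sided bound on $\zeta$ is in fact what the paper's derivative estimate implicitly uses.
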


\begin{proof}
Let $\rho(s,x,y):=B e^{-\lambda_* s}\varphi_*(x,y)$ and $V(s,x,y):=\phi(s,x,y)-\rho(s,x,y)$. By \eqref{asy-phi}, one has that
$$\liminf_{s\rightarrow +\infty} \Big[\min_{(x,y)\in \overline{\Omega}}\Big(\frac{V(s,x,y)}{e^{-\lambda_* s}}\Big)\Big]=0.$$
Then, for any sequence $\{\varepsilon_n\}_{n\in N}$ such that $0<\varepsilon_n<1$ and $\varepsilon_n\rightarrow 0$ as $n\rightarrow +\infty$, there is a sequence $\{s_n\}_{n\in N}$ such that $s_0<s_1<\cdots<s_n<\cdots$ and 
\be\label{sn}
s_n\rightarrow +\infty \hbox{ as $n\rightarrow +\infty$ and } V(s_n,x,y)\le \varepsilon_n e^{-\lambda_* s_n}\varphi_*(x,y) \hbox{ for all $n\in N$}.
\ee

Since $\phi(x\cdot e -c^* t,x,y)$ satisfies \eqref{RD} and $\rho(x\cdot e-c^* t,x,y)$ satisfies the linearised equation of \eqref{RD} at $0$, one has that $v(t,x,y):=V(x\cdot e -c^* t,x,y)$ satisfies 
\begin{eqnarray}\label{eq-v}
\left\{\begin{array}{lll}
v_t-\nabla \cdot (A(z)\nabla v)+q(z)\cdot \nabla v =f(z,v+\rho)-f_u(z,0)\rho, &&t\in\R, z=(x,y)\in \overline{\Omega},\\
\nu A\nabla v=0, &&t\in\R, z\in\partial\Omega,
\end{array}
\right.
\end{eqnarray}
For $t\in\R$ and $(x,y)\in\overline{\Omega}$, define 
$$\overline{v}_n(t,x,y)=\overline{\phi}_n(x\cdot e-ct,x,y)=\theta e^{-r (x\cdot e -c^* t)}\varphi_r(x,y)+ \varepsilon_n e^{-\lambda_* (x\cdot e -c^* t)}\varphi_*(x,y).$$
where $r=\min\{(1+\alpha)\lambda_*,(\lambda_*+\lambda_*^+)/2\}$ and $\theta$ is a constant to be given. Notice that $k(r)+c^* r>0$ since $k$ is concave by Lemma~\ref{lemFc}. Since $f_u(x,y,u)$ is of $C^{0,\alpha}$ with respect to $u$, there are $\gamma\in (0,1)$ and $\delta>0$ such that
\be\label{eq-F}
|f(x,y,u)-f_u(x,y,0)u|\le \delta u^{1+\alpha} \hbox{ for all $(x,y,u)\in\overline{\Omega}\times [0,\gamma]$}.
\ee
Let
$$ \theta:=e^{(r-\lambda_*)s_0} \frac{\displaystyle\min_{\overline{\Omega}}\varphi_{*}}{\displaystyle\max_{\overline{\Omega}}\varphi_{r}}.$$
Notice that $\theta$ is increasing in $s_0$.
Since $V(x\cdot e -c^*t,x,y)\rightarrow 0$ as $x\cdot e-c^* t\rightarrow +\infty$, one can take a sufficiently large constant $\tau$ such that
$$\theta e^{-r s_0}\varphi_r(x,y)>V(x\cdot e -c^*t+\tau,x,y), \hbox{ for all $(t,x,y)\in\R\times\overline{\Omega}$ such that $x\cdot e -c^* t\ge s_0$}.$$
It means that
\be\label{phiV}
\overline{\phi}_n(s_0,x,y)> V(x\cdot e -c^*t+\tau,x,y), 
\ee
 for all $n\in\mathbb{N}$ and $(t,x,y)\in\R\times\overline{\Omega}$ such that $x\cdot e -c^* t\ge s_0$.
By \eqref{sn}, one also knows that
\be\label{phiV-}
\overline{\phi}_n(x\cdot e -c^*t,x,y)> V(s_n+\tau,x,y), \hbox{ for $(t,x,y)\in\R\times\overline{\Omega}$ such that $s_0\le x\cdot e -c^* t \le s_n$}.
\ee
By the definition of $\theta$, one can take $s_0$ sufficiently large such that 
\be\label{c-theta}
\theta e^{-r s} \varphi_r(x,y)\le e^{-\lambda_* s}\varphi_*(x,y), \hbox{ for $s\ge s_0$}
\ee
and
\be\label{c-theta2}
(k(r)+c r) \theta\ge \delta [(2+B)\max_{(x,y)\in\overline{\Omega}}\varphi_*(x,y)]^{1+\alpha}, 
\ee
since $k(r)+c^* r>0$.
Even if it means increasing $s_0$, one can also assume without loss of generality that
$$\overline{v}_n(t,x,y)+\rho(x\cdot e-c^* t,x,y)=\theta e^{-r (x\cdot e -c^* t)}\varphi_r(x,y)+ (B+\varepsilon_n) e^{-\lambda_* (x\cdot e -c^* t)}\varphi_*(x,y)\le \gamma,$$
for all $n\in\mathbb{N}$ and $(t,x,y)\in\R\times\overline{\Omega}$ such that $x\cdot e -c^* t\ge s_0$.

Now, we can check from \eqref{eq-F}-\eqref{c-theta2} that 
\begin{align*}
\mathscr{L}\overline{v}_n:=& (\overline{v}_n)_t-\nabla \cdot (A(z)\nabla \overline{v}_n)+q(z)\cdot \nabla \overline{v}_n -f(z,\overline{v}_n+\rho)+f_u(x,y,0)\rho\\
=& (k(r)+c^* r)\theta e^{-r (x\cdot e -c^*t)}\varphi_r(x,y) +f_u(x,y,0)\overline{v}_n-f(x,y,\overline{v}_n+\rho)+f_u(z,0)\rho\\
\ge& (k(r)+c^* r)\theta e^{-r (x\cdot e -c^*t)}\varphi_r(x,y) -\delta (\overline{v}_n+\rho)^{1+\alpha}\\
\ge& (k(r)+c^* r)\theta e^{-r (x\cdot e -c^* t)}\varphi_r(x,y) - \delta [(2+B)\max_{(x,y)\in\overline{\Omega}}\varphi_*(x,y)]^{1+\alpha} e^{-(1+\alpha)\lambda_* (x\cdot e -c^* t)}\\
\ge& 0,
\end{align*}
for $(t,x,y)\in\R\times\overline{\Omega}$ such that $x\cdot e -c^* t\ge s_0$ and $\nu A\nabla\overline{v}_n=0$ on $\R\times\partial\Omega$.
By \eqref{phiV}, \eqref{phiV-} and Lemma~\ref{comparison}, it follows that $\overline{v}_n(t,x,y)$ is a supersolution of \eqref{eq-v} and 
$$V(x\cdot e -c^*t+\tau,x,y)\le \overline{\phi}_n(x\cdot e-ct,x,y),$$ 
for $(t,x,y)\in \R\times\overline{\Omega}$ such that $s_0\le x\cdot e -c^* t \le s_n$ and all $n\in \mathbb{N}$.
By passing $n$ to $+\infty$, one has that
$$V(x\cdot e -c^*t+\tau,x,y)\le \theta e^{-r (x\cdot e -c^* t)}\varphi_r(x,y), \hbox{ for $(t,x,y)\in \R\times\overline{\Omega}$ such that $x\cdot e -c^* t \ge s_0$}.$$

By standard parabolic estimates applied to $v(t,x,y)=V(x\cdot e-c^* t,x,y)$, there exists $C_1>0$ such that
$$|V_s(s,x,y)|\le C_1 e^{-r s} \hbox{ for $s\ge s_0$ and $(x,y)\in \overline{\Omega}$}.$$
Notice that $\phi_s(s,x,y)=\rho_s(s,x,y)+V_s(s,x,y)$. Thus,
$$\phi_s(s,x,y)\sim -B\lambda_* e^{-\lambda_* s}\varphi_*(x,y) \hbox{ as $s\rightarrow +\infty$},$$
and there is $s_0\in\R$ such that
$$|\phi_s(s,x,y)|\le B\lambda_* e^{-\lambda_* s}\varphi_*(x,y)+O(e^{-r s})  \hbox{ for $s\ge s_0$ and $(x,y)\in \overline{\Omega}$}.$$

This completes the proof.
\end{proof}
\vskip 0.3cm 

Now, we want to deduce a contradiction. If we can find a pulsating front with speed less than $c^*$, then it contradicts the definition of $c^*$. Notice that $\phi(s,x,y)$ satisfies 
\begin{eqnarray}
\left\{\begin{array}{lll}
\mathcal{L}_{c^*} \phi - f(x,y,\phi)=0, &&\hbox{ in $\R\times\overline{\Omega}$},\\
\nu A e\phi_s +\nu A\nabla\phi=0, &&\hbox{ on $\R\times\partial\Omega$}.
\end{array}
\right.
\end{eqnarray}
where 
$$\mathcal{L}_{c^*} v=-(e\partial_s +\nabla)\cdot (A(e\partial_s+\nabla) v) +q\cdot (\nabla v +e v_s) - c^* v_s.$$ 
For $\tau\in\R$, we now look for  a solution $\Phi(s,x,y)$ of 
\begin{eqnarray}
\left\{\begin{array}{lll}
\mathcal{L}_{c^*-\tau} \Phi - f(x,y,\Phi)=0, &&\hbox{ in $\R\times\overline{\Omega}$},\\
\nu A e\Phi_s +\nu A\nabla\Phi=0, &&\hbox{ on $\R\times\partial\Omega$}.
\end{array}
\right.
\end{eqnarray}
with the form
$$\Phi(s,x,y)=\phi(s,x,y)+(B e^{-\lambda_{c^* -\tau} s} \varphi_{c^*-\tau}(x,y)-B e^{-\lambda_* s} \varphi_*(x,y) )h(s+\theta(x,y)) +\omega(s,x,y),$$
where $\varphi_{c^*-\tau}(x,y)$  is the unique positive principal eigenfunction of $L_{\lambda_{c^*-\tau}}\varphi=k(\lambda_{c^*-\tau}) \varphi$ such that $\|\varphi_{c^*-\tau}\|_{L^{\infty}(\mathcal{C})}=1$, $h(s)$ is a smooth function such that $h(s)=0$ for $s\le -1$ and $h(s)=1$ for $s\ge 0$ and $\theta(x,y)$ is a $C^2(\overline{\Omega})$ nonpositive periodic function such that
\be\label{F-theta}
\nu A\nabla \theta +\nu A e=0 \hbox{ on $\partial\Omega$}.
\ee
An example of the function $\theta(x,y)$ is given in p.364 of \cite{HR2011}.
Then, it implies that $\omega(s,x,y)$ satisfies
\begin{eqnarray}\label{eq-omega}
\left\{\begin{array}{ll}
\mathcal{L}_{c^*-\tau}\Big((B e^{-\lambda_{c^* -\tau} s} \varphi_{c^*-\tau} -B e^{-\lambda_* s} \varphi_* )h(s+\theta)+\omega\Big) &\\
\hskip 4.5cm -f(x,y,\Phi)+f(x,y,\phi)+ \tau \phi_s =0,  &\hbox{ in $\R\times\overline{\Omega}$},\\
\nu A e\omega_s +\nu A\nabla\omega=0,  &\hbox{ on $\R\times\partial\Omega$}.
\end{array}
\right.
\end{eqnarray}
Let $\rho(s)=1+e^{2 r s}$ where $r=\min\{(1+\alpha)\lambda_*,(\lambda_*+\lambda_*^+)/2\}$. Define the weighted $L^2$ spaces
$$L_{\rho}^2=\left\{\omega(s,x,y),\ (s,x,y)\in \R\times\overline{\Omega}; \|\omega\|_{2,\rho}=\left(\int_{\R\times \mathcal{C}} \omega^2 \rho ds dx dy\right)^{\frac{1}{2}} <+\infty\right\},$$
$$H_{\rho}^1=\left\{\omega(s,x,y)\in L_{\rho}^2;\|\omega\|^2_{H^1_{\rho}}=\|\omega\|^2_{L^2_{\rho}}+ \int_{\R\times \mathcal{C}} |\nabla \omega|^2 \rho ds dx dy <+\infty\right\}.$$
Define an operator
$$\mathcal{L} \omega:=\mathcal{L}_{c^*}\omega -f_u(x,y,\phi)\omega,$$
where
$$\omega\in D(\mathcal{L})=\{\omega; \omega\in H^1_{\rho}, (e\partial_s +\nabla)^2 \omega \in L^2_{\rho},\ \nu A e\omega_s +\nu A\nabla\omega=0 \hbox{ on $\R\times\partial\Omega$}\}.$$
We need some properties of the operator $\mathcal{L}$.

\begin{lemma}\label{lemma-M}
For $\beta>0$ sufficiently large, the operator $\mathcal{L}\omega +\beta \omega: D(\mathcal{L})\rightarrow L^2_{\rho}$ is invertible.
\end{lemma}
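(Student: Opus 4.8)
The plan is to deduce invertibility from the Lax--Milgram theorem applied to the bilinear form attached to $\mathcal{L}+\beta$ in the weighted inner product $\langle u,v\rangle_\rho:=\int_{\R\times\mathcal{C}}uv\,\rho\,ds\,dx\,dy$. Write $\tilde\nabla:=e\partial_s+\nabla$ for the degenerate gradient that appears in $\mathcal{L}_{c^*}$, so that the conormal condition reads $\nu\cdot(A\tilde\nabla\omega)=0$ on $\R\times\partial\Omega$, and let $V$ be the weighted Sobolev space of functions $\omega$ with $\omega,\omega_s,\nabla\omega\in L^2_\rho$ satisfying this boundary condition in the weak sense. For $\omega,\zeta$ smooth with compact support in $s$, integrate $\int_{\R\times\mathcal{C}}(\mathcal{L}\omega+\beta\omega)\,\zeta\,\rho$ by parts: periodicity in $x$ cancels the lateral boundary terms on $\partial\mathcal{C}$, the conormal condition cancels the boundary terms on $\partial\Omega$, and the terms at $s=\pm\infty$ vanish; since $\rho=\rho(s)$ one is led to
\[
a_\beta(\omega,\zeta)=\int_{\R\times\mathcal{C}}\Big[\rho\,(A\tilde\nabla\omega)\!\cdot\!\tilde\nabla\zeta+\rho'\,\zeta\,(eA\tilde\nabla\omega)+\rho\,(q\!\cdot\!\tilde\nabla\omega)\,\zeta-c^*\rho\,\omega_s\,\zeta+\rho\,(\beta-f_u(x,y,\phi))\,\omega\,\zeta\Big]\,ds\,dx\,dy,
\]
and a weak solution of $\mathcal{L}\omega+\beta\omega=g$ is then $\omega\in V$ with $a_\beta(\omega,\zeta)=\langle g,\zeta\rangle_\rho$ for all $\zeta\in V$.

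Continuity of $a_\beta$ on $V\times V$ uses boundedness of $A$ and $q$, boundedness of $f_u(x,y,\phi(s,x,y))$ (because $0<\phi<1$ and $f$ is $C^1$ in $u$ locally uniformly on the periodic, hence effectively compact, domain $\overline\Omega$), and, decisively, $\|\rho'/\rho\|_{L^\infty(\R)}=\|2re^{2rs}/(1+e^{2rs})\|_{L^\infty(\R)}=2r<+\infty$: this is exactly why the weight is taken as $1+e^{2rs}$ rather than a pure exponential. For $\beta$ large one also has the partial coercivity $a_\beta(\omega,\omega)\ge\tfrac12 c_1\|\tilde\nabla\omega\|^2_{L^2_\rho}+\|\omega\|^2_{L^2_\rho}\ge c\,\|\omega\|_\ast^2$, where $\|\omega\|_\ast^2:=\|\omega\|^2_{L^2_\rho}+\|\tilde\nabla\omega\|^2_{L^2_\rho}$ and $c=\min(c_1/2,1)$: the principal term is $\ge c_1\int\rho|\tilde\nabla\omega|^2$ by uniform ellipticity; the cross term $\int\rho'\,\omega\,(eA\tilde\nabla\omega)$ and the first-order term $\int\rho(q\!\cdot\!\tilde\nabla\omega)\,\omega$ are absorbed, by Young's inequality and $|\rho'|\le\|\rho'/\rho\|_\infty\rho$, into $\tfrac12 c_1\int\rho|\tilde\nabla\omega|^2$ plus a multiple of $\int\rho\omega^2$; the term $-c^*\int\rho\,\omega_s\,\omega=\tfrac{c^*}{2}\int\rho'\,\omega^2\ge0$ is harmless; and the bounded zeroth-order contribution $-\int\rho f_u(x,y,\phi)\,\omega^2$ is dominated once $\beta$ exceeds the accumulated constant.

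The only genuine subtlety is that $a_\beta$ is continuous on $V\times V$ but coercive only for the \emph{degenerate} norm $\|\cdot\|_\ast$, which is strictly weaker than the $V$-norm (e.g.\ $\omega_n=\sin(n(x_1-s))\eta(s)$, with $\eta$ a fixed bump and $n$ compatible with the $x_1$-period, stays bounded in $\|\cdot\|_\ast$ while $\|\partial_s\omega_n\|_{L^2_\rho}\to\infty$) --- a faithful reflection of the loss of ellipticity of $\mathcal{L}_{c^*}$ along the characteristic direction. To get existence I would therefore regularize: for $\varepsilon>0$ set $a_\beta^\varepsilon(\omega,\zeta)=a_\beta(\omega,\zeta)+\varepsilon\int_{\R\times\mathcal{C}}\rho\,(\omega_s\zeta_s+\nabla\omega\!\cdot\!\nabla\zeta)$, which is continuous and genuinely coercive on $V$; Lax--Milgram gives a unique $\omega_\varepsilon\in V$ with $a_\beta^\varepsilon(\omega_\varepsilon,\zeta)=\langle g,\zeta\rangle_\rho$, and testing against $\omega_\varepsilon$ yields the $\varepsilon$-independent bound $\|\omega_\varepsilon\|_\ast\le C\|g\|_{L^2_\rho}$ together with $\varepsilon(\|\partial_s\omega_\varepsilon\|^2_{L^2_\rho}+\|\nabla\omega_\varepsilon\|^2_{L^2_\rho})\le C\|g\|^2_{L^2_\rho}$. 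Extracting a weak limit $\omega_\varepsilon\rightharpoonup\omega$ in the $\|\cdot\|_\ast$-space and letting $\varepsilon\to0$ (the regularization terms tested against a fixed smooth $\zeta$ are $O(\sqrt\varepsilon)$, and after moving the $\omega_s$-derivative onto $\rho\zeta$ the remaining part of $a_\beta(\cdot,\zeta)$ is $\|\cdot\|_\ast$-continuous) shows $\mathcal{L}\omega+\beta\omega=g$ in the distributional sense; uniqueness of such an $\omega$ with $\omega,\tilde\nabla\omega\in L^2_\rho$ follows by testing the homogeneous problem against $\omega$ itself and invoking the partial coercivity. To land in $D(\mathcal{L})$ I upgrade regularity by returning to the original $(t,x,y)$ variables: $v(t,x,y):=\omega(x\cdot e-c^*t,x,y)$ is a distributional solution of the \emph{uniformly parabolic} problem $v_t-\nabla\!\cdot\!(A\nabla v)+q\!\cdot\!\nabla v+(\beta-f_u(x,y,\phi(x\cdot e-c^*t,x,y)))v=g(x\cdot e-c^*t,x,y)$ with the conormal boundary condition; interior and up-to-the-boundary parabolic $L^2$-estimates give that $v$ has, locally, one time derivative and two spatial derivatives in $L^2$, hence $\omega_s,\nabla^2\omega,\tilde\nabla^2\omega\in L^2_{\rm loc}$, while a weighted Caccioppoli-type estimate (again using $\|\rho'/\rho\|_\infty<\infty$ and $\omega,\tilde\nabla\omega,g\in L^2_\rho$) promotes the spatial gradient to the global bound $\nabla\omega\in L^2_\rho$; then $\omega_s=e\!\cdot\!\tilde\nabla\omega-e\!\cdot\!\nabla\omega\in L^2_\rho$, the equation gives $(e\partial_s+\nabla)\!\cdot\!(A\tilde\nabla\omega)\in L^2_\rho$ and hence $\tilde\nabla^2\omega\in L^2_\rho$ by the Calder\'on--Zygmund estimate for the constant-rank operator $\tilde\nabla\cdot A\tilde\nabla$, and the conormal condition now holds in the usual sense. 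Thus $\omega\in D(\mathcal{L})$ and $\mathcal{L}\omega+\beta\omega=g$ a.e.; combining existence, uniqueness and $\|\omega\|_\ast\le C\|g\|_{L^2_\rho}$, the operator $\mathcal{L}+\beta:D(\mathcal{L})\to L^2_\rho$ is a bijection.

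I expect the degeneracy handling to be the crux: because $\mathcal{L}_{c^*}$ is only parabolic, not elliptic, in $(s,x,y)$, the associated form is not coercive on the full weighted $H^1$, so neither a one-line Lax--Milgram argument nor the inclusion $\omega\in D(\mathcal{L})$ is automatic; the $\varepsilon$-regularization with $\varepsilon$-uniform estimates (for existence) and the reinterpretation as a uniformly parabolic equation in $(t,x,y)$ plus the weighted energy estimate (for the $s$- and spatial-gradient regularity) are where the real work lies, whereas the integration by parts and the continuity/coercivity estimates are routine once one observes that $\rho=1+e^{2rs}$ has bounded logarithmic derivative.
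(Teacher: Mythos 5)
Your proposal is correct in outline but follows a genuinely different route from the paper. The paper's proof is short and abstract: taking $\beta>\sup_{\overline{\Omega}\times[0,1]}|f_u|$, it multiplies $M\omega=0$ (with $M=\mathcal{L}+\beta$) by $\omega$ and integrates by parts to get $\int_{\R\times\mathcal{C}}(e\partial_s\omega+\nabla\omega)A(e\partial_s\omega+\nabla\omega)+(\beta-f_u(x,y,\phi))\omega^2=0$, so the kernel of $M$ is trivial; it then writes down the adjoint $M^*$ in $L^2_\rho$ explicitly (the weight enters through the substitution $\omega=\rho v$) and kills its kernel by the same energy identity; closedness of the range is quoted from the proof of Lemma~2.5 of \cite{X}, and invertibility follows by functional analysis. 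You instead construct the solution directly: Lax--Milgram for an $\varepsilon$-regularized form, $\varepsilon$-uniform bounds in the degenerate norm $\|\omega\|^2_{L^2_\rho}+\|(e\partial_s+\nabla)\omega\|^2_{L^2_\rho}$, passage to the limit, and a regularity step (reinterpretation as a uniformly parabolic problem in $(t,x,y)$ plus weighted estimates) to land in $D(\mathcal{L})$. Both routes rest on the same two pillars: the coercive energy identity for large $\beta$ (your observation $|\rho'|\le 2r\rho$ is exactly why the weight $1+e^{2rs}$ is harmless) and hard a priori estimates for the degenerate operator, which the paper outsources to Xin's lemmas while you re-derive them in sketch. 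What your approach buys is self-containedness and an explicit bound $\|\omega\|_{*}\le C\|g\|_{L^2_\rho}$; what it costs is that the real work hides in precisely the steps you treat most loosely: the space $V$ should not be defined by imposing the conormal condition (it is a natural condition, not meaningful on $H^1$ traces, and the viscosity term perturbs it anyway); the ``Calder\'on--Zygmund estimate for $\widetilde{\nabla}\cdot A\widetilde{\nabla}$'' is not a standard tool for a degenerate operator and should be replaced by the identity $(e\partial_s+\nabla)^2\omega=D^2_z v$ with $v$ solving the uniformly parabolic equation, combined with summation of local parabolic $L^2$ estimates against the slowly varying weight (essentially the content of Lemmas~2.2--2.4 of \cite{X} that the paper invokes in the next lemma); and uniqueness should be stated in $D(\mathcal{L})$, where the energy identity is licit, rather than in the weak $\|\cdot\|_{*}$ class, which suffices for the claimed invertibility.
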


\begin{proof}
Take any constant $\beta>0$ such that 
$$\beta>\sup_{(x,y)\in \overline{\Omega}, u\in [0,1]} |f_u(x,y,u)|.$$ 
Define $M\omega:=\mathcal{L}\omega +\beta \omega$. Let $\omega$ be in the kernel of $M$. Multiply $M\omega=0$ by $\omega$ and integrate by parts. One gets that
$$\int_{\R\times\mathcal{C}} (e\partial_s \omega +\nabla\omega) A (e\partial_s \omega +\nabla\omega) +(\beta -f_u(x,y,\phi))\omega^2=0,$$
which implies that $\omega\equiv 0$. In $L_{\rho}^2$, the adjoint operator $M^*$ of $M$ is defined by 
$$M^* v=-\frac{1}{\rho}(e\partial_s +\nabla)\cdot (A(e\partial_s+\nabla) (\rho v)) -\frac{1}{\rho}q\cdot (\nabla  +e\partial_s)(\rho v) + \frac{1}{\rho} c^* (\rho v)_s +(\beta-f_u(x,y,\phi))v,$$
with the boundary condition $\nu A e(\rho v)_s +\nu A\nabla(\rho v)=0$ on $\R\times\partial\Omega$.
Let $v$ be in the kernel of $M^*$ and $\omega=\rho v$. Then, $\omega$ satisfies 
\begin{eqnarray*}
\left\{\begin{array}{lll}
-(e\partial_s +\nabla)\cdot (A(e\partial_s+\nabla) \omega) -q\cdot (\nabla \omega +e\omega_s)+ c^* \omega_s +(\beta-f_u(x,y,\phi))\omega=0, &\hbox{ in $\R\times\overline{\Omega}$},\\
\nu A e\omega_s +\nu A\nabla\omega=0, &\hbox{ on $\partial\Omega$}.
\end{array}
\right.
\end{eqnarray*}
The same arguments yield that $\omega\equiv 0$. The fact that the range of $M$ is closed follows from the proof of Lemma~2.5 of \cite{X}. This completes the proof.
\end{proof}

\begin{lemma}
The operator $\mathcal{L}: D(\mathcal{L})\rightarrow L^2_{\rho}$ is invertible.
\end{lemma}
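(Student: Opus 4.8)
The plan is to upgrade Lemma~\ref{lemma-M} from the shifted operator $\mathcal{L}+\beta$ to $\mathcal{L}$ itself by a spectral-decay argument, exploiting the weight $\rho(s)=1+e^{2rs}$ and the fact that $\phi\to 0$ as $s\to+\infty$ while $\phi\to 1$ as $s\to-\infty$. First I would record that $M=\mathcal{L}+\beta$ is invertible (Lemma~\ref{lemma-M}), so $\mathcal{L}=M-\beta$ is Fredholm, and it suffices to show $0$ is not in the spectrum; equivalently, since $M^{-1}$ is bounded, it suffices to show $0\notin\sigma(\mathcal{L})$ by checking that $\mathcal{L}$ is injective, has closed range, and has dense range (equivalently, $\mathcal{L}^*$ is injective). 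Injectivity of $\mathcal{L}$: suppose $\mathcal{L}\omega=0$ with $\omega\in D(\mathcal{L})$. Then $\omega$ is a bounded (by elliptic/parabolic estimates, since $\omega\in H^1_\rho$ forces decay as $s\to+\infty$) solution of $\mathcal{L}_{c^*}\omega=f_u(x,y,\phi)\omega$. The key point is that $\phi_s$ itself solves this linearized equation — differentiating the front equation $\mathcal{L}_{c^*}\phi=f(x,y,\phi)$ in $s$ gives $\mathcal{L}_{c^*}\phi_s=f_u(x,y,\phi)\phi_s$ — and $\phi_s<0$ everywhere by the known monotonicity. So $\phi_s$ is a positive (up to sign) solution in the kernel, and by a sliding / maximum-principle argument in the spirit of Lemma~2.5 of \cite{X} (or the analysis of \cite{HR2011}), any solution $\omega$ in the weighted space must be a scalar multiple of $\phi_s$. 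But then I must check whether $\phi_s\in L^2_\rho$: as $s\to+\infty$, $\phi_s\sim -B\lambda_*e^{-\lambda_*s}\varphi_*$ by Lemma~\ref{lemma2.1}, so $|\phi_s|^2\rho\sim e^{-2\lambda_*s}e^{2rs}=e^{2(r-\lambda_*)s}$, and since $r=\min\{(1+\alpha)\lambda_*,(\lambda_*+\lambda_*^+)/2\}>\lambda_*$ this blows up — hence $\phi_s\notin L^2_\rho$, forcing the scalar to be $0$, i.e. $\omega\equiv0$.

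Next, closed range follows exactly as in Lemma~\ref{lemma-M}: the estimate $\|\mathcal{L}\omega+\beta\omega\|_{2,\rho}\ge c\|\omega\|_{H^1_\rho}$ on the orthogonal complement of the kernel (which is trivial here), combined with the compactness structure from \cite{X}, gives a coercivity bound $\|\mathcal{L}\omega\|_{2,\rho}\ge c'\|\omega\|_{H^1_\rho}$; here one uses that near $s=-\infty$ the zeroth-order coefficient $-f_u(x,y,\phi)\to-f_u(x,y,1)$ is controlled (the sign information from $\mu_1>0$) and near $s=+\infty$ the weight $\rho$ grows, both of which supply the needed coercivity at infinity. Finally, density of the range: I compute the adjoint $\mathcal{L}^*$ in $L^2_\rho$ as in the proof of Lemma~\ref{lemma-M} and show it is injective by the same maximum-principle argument — a kernel element $v$ of $\mathcal{L}^*$ gives, via $\omega=\rho v$, a solution of a conjugated equation whose only bounded solution in the relevant space is zero, because the conjugation by $\rho$ shifts the effective decay/growth so that the $\phi_s$-type solution again fails to lie in the space. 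Combining injectivity, closed range, and dense range yields that $\mathcal{L}:D(\mathcal{L})\to L^2_\rho$ is bijective, and boundedness of the inverse is automatic from the closed graph theorem (or directly from the coercivity estimate).

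The main obstacle I anticipate is making the uniqueness-up-to-scalar step rigorous in the weighted space: one must rule out solutions of $\mathcal{L}_{c^*}\omega=f_u(x,y,\phi)\omega$ that behave like other exponentials $e^{-\lambda s}$ at $+\infty$ and are genuinely in $L^2_\rho$ but are not multiples of $\phi_s$. This is precisely where the choice $r\in(\lambda_*,\lambda_*^+)$ (more precisely $r<(\lambda_*+\lambda_*^+)/2$) is engineered: the membership in $L^2_\rho$ forces decay strictly faster than $e^{-rs}$, hence faster than $e^{-(\lambda_*+\lambda_*^+)s/2}$, and the spectral gap between $\lambda_*$ and the next exponent $\lambda_*^+$ of the linearized operator at $0$ means the only admissible decay rate in that window is $\lambda_*$ itself (the $\phi_s$ behavior), which then gets excluded by the weight as computed above. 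Pinning down this ODE-type asymptotic classification in the periodic setting — essentially an adaptation of Proposition~4.3 of \cite{H2008} and the arguments of \cite{HR2011,X} — is the technical heart; the rest is bookkeeping with the comparison principle and the Fredholm alternative.
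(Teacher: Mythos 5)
Your overall architecture coincides with the paper's: triviality of the kernel of $\mathcal{L}$, closedness of the range, and triviality of the kernel of $\mathcal{L}^*$. Your injectivity argument is essentially the paper's Step~1: the paper implements the claim ``any kernel element is a multiple of $\phi_s$'' by exactly the sliding/maximum-principle scheme you gesture at (comparison of $\omega$ with $\eta\phi_s$ on $[-s_0,+\infty)\times\overline{\Omega}$, where $\omega=O(e^{-rs})$ is dominated by $-\phi_s\sim B\lambda_*e^{-\lambda_* s}\varphi_*$ from Lemma~\ref{lemma2.1}, a $\psi_1$-perturbation argument near $s=-\infty$ using $\mu_1>0$, then a touching point and the strong maximum principle/Hopf lemma), and the multiple is then excluded precisely by your observation that $\phi_s\notin L^2_{\rho}$ since $r>\lambda_*$ (legitimate here because the lemma sits inside the contradiction hypothesis of Step~1, so the $e^{-\lambda_* s}$ asymptotics of $\phi_s$ are available). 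Your side remark that $\mathcal{L}=M-\beta$ is Fredholm because $M$ is invertible is unjustified (a large bounded, non-compact perturbation of an invertible operator on an unbounded domain need not be Fredholm), but you do not actually use it.

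The genuine gap is the closed-range step. From $\|M\omega\|_{2,\rho}\ge c\|\omega\|_{H^1_{\rho}}$ you cannot deduce $\|\mathcal{L}\omega\|_{2,\rho}\ge c'\|\omega\|_{H^1_{\rho}}$: the perturbation $\beta$ is neither small nor compact, and for a closed operator with trivial kernel such a global lower bound is equivalent to closedness of the range, so asserting it is assuming what must be proved. Moreover, at $s=+\infty$ the mechanism is not that ``the weight $\rho$ grows''; growth of the weight by itself gives no coercivity. What matters is that the exponent $r$ lies where $k(r)+c^*r>0$. The paper's Step~2 makes this operative by changing the unknown, $\omega=d\,v$ with $d\sim e^{-rs}\varphi_r$ as $s\to+\infty$ and $d\sim\psi_1$ as $s\to-\infty$, so that the conjugated operator $\hat{\mathcal{L}}$ has zeroth-order coefficient bounded below by $k(r)+c^*r-|f_u(\cdot,\phi)-f_u(\cdot,0)|>0$ for $s\ge N$ and by $\mu_1-|f_u(\cdot,\phi)-f_u(\cdot,1)|>0$ for $s\le -N$; then, assuming $\|v_n\|_{L^2}$ unbounded and normalizing, the weak limit vanishes by the trivial kernel, and the possible escape of mass to $\pm\infty$ is ruled out by testing against $\chi(s)m(x,y)\omega_n$, where $m$ is an auxiliary positive periodic solution of a divergence-form equation chosen so that the first-order terms integrate out. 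This construction (the weight $d$, the cutoff $\chi$, the function $m$) is the technical core of the lemma and is neither supplied nor replaced by your plan. A smaller point: for the adjoint, the paper does not rerun a maximum-principle argument on the $\rho$-conjugated equation; it invokes the invertibility of $M$ and the duality argument of p.~220 of \cite{X}, whereas your direct route would again require the asymptotic classification you defer, so as written it too is incomplete.
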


\begin{proof}
{\it Step~1: the kernel of $\mathcal{L}$ is reduced to $\{0\}$.} 
Let $\omega\in D(\mathcal{L})$ such that $\mathcal{L}\omega=0$ and $\omega\not\equiv 0$. By rewriting the equation $\mathcal{L}\omega=0$ in its parabolic form and by parabolic regularity theory, it follows that $\omega$ is of class $C^2(\R\times\Omega)$ and it is a bounded classical solution of $\mathcal{L}\omega=0$ and $\nu A e \omega_s + \nu A\nabla\omega =0$ on $\R\times\partial\Omega$ such that $\omega(\pm \infty,\cdot,\cdot)=0$ and $\omega(s,x,y)=O(e^{-r s})$ as $s\rightarrow +\infty$. Notice that $\phi_s$ is a solution of $\mathcal{L}\omega=0$ and  $\nu A e \omega_s + \nu A\nabla\omega =0$ on $\R\times\partial\Omega$. Take $s_0>0$ such that 
$$|f_u(x,y,1)-f_u(x,y,\phi)|\le \frac{\mu_1}{2} \hbox{ for $s\le -s_0$ and $(x,y)\in \overline{\Omega}$},$$
where $\mu_1$ is defined by \eqref{mu1}.
Since $\phi_s \sim -B \lambda_* e^{-\lambda_* s} \varphi_*(x,y)$ as $s\rightarrow +\infty$ and $\lambda_*<r$, there is $\eta_0>0$ such that for $\eta\ge \eta_0$, 
$$\omega_{\eta}(s,x,y):=\omega(s,x,y) -\eta \phi_s(s,x,y)\ge 0, \hbox{ for all $(s,x,y)\in [-s_0,+\infty)\times\overline{\Omega}$}.$$
Let $\varepsilon_*=\inf\{\varepsilon>0; \omega_{\eta}(s,x,y)+\varepsilon\psi_1(x,y)\ge 0, (s,x,y)\in (-\infty,-s_0]\times\overline{\Omega}\}$ where $\psi_1(x,y)$ is defined by \eqref{mu1} with $\|\psi_1\|_{L^{\infty}(\mathcal{C})}=1$. Since $\omega_{\eta}(s,x,y)\rightarrow 0$ as $s\rightarrow -\infty$, one has that $0\le \varepsilon_*<+\infty$. If $\varepsilon_*>0$, one also has that 
\be\label{varpsilon-s}
\omega_{\eta}(s,x,y)+\varepsilon_* \psi_1(x,y)>0, \hbox{ for $(s,x,y)\in \{-s_0\}\times\overline{\Omega}$ and $s\rightarrow -\infty$, $(x,y)\in\overline{\Omega}$}.
\ee
By the definition of $\varepsilon_*$, it follows that there is $(\bar{s},\bar{x},\bar{y})\in (-\infty,-s_0]\times\overline{\Omega}$ such that $\omega_{\eta}(\bar{s},\bar{x},\bar{y})+\varepsilon_* \psi_1(\bar{x},\bar{y})=0$. Notice that $\omega_{\eta}(s,x,y)+\varepsilon_*\psi_1(x,y)$ satisfies 
\begin{eqnarray*}
\left\{\begin{array}{lll}
\mathcal{L}(\omega_{\eta}+\varepsilon_*\psi_1)=\varepsilon_*\mu_1\psi_1 +(f_u(x,y,1)-f_u(x,y,\phi))\varepsilon_*\psi_1\ge \frac{\varepsilon_* \mu_1}{2} \psi_1>0, &\hbox{ in $\R\times\overline{\Omega}$},\\
\nu A e(\omega_{\eta}+\varepsilon_* \psi_1)_s +\nu A\nabla(\omega_{\eta}+\varepsilon_* \psi_1)=0, &\hbox{ on $\R\times\partial\Omega$}.
\end{array}
\right.
\end{eqnarray*}
By the strong maximum principle and Hopf Lemma applied to the parabolic form of $L\omega$, one has that $\omega_{\eta}+\varepsilon_*\psi_1\equiv 0$ for $(s,x,y)\in (-\infty,-s_0]\times\overline{\Omega}$ which contradicts \eqref{varpsilon-s}. Thus, $\varepsilon_*=0$ and $\omega_{\eta}(s,x,y)\ge 0$ for $\eta\ge \eta_0$ and $(s,x,y)\in \R\times\overline{\Omega}$.
Define $\eta_*=\inf\{\eta\in \R; \omega_{\eta}\ge 0 \hbox{ in } \R\times\overline{\Omega}\}$. One has that $-\infty<\eta_*\le \eta_0$. Since $|\phi_s/\omega|>1$ as $s\rightarrow +\infty$, there exists $(s_*,x_*,y_*)\in [-s_0,+\infty)\times\overline{\Omega}$ such that $$\omega_{\eta_*}(s_*,x_*,y_*)=0.$$
Otherwise, $\omega_{\eta_*}>0$ in $[-s_0,+\infty)\times\overline{\Omega}$ and then $\omega_{\eta_*-\varepsilon}\ge 0$ in $[-s_0,+\infty)\times\overline{\Omega}$ for some small $\varepsilon>0$ by continuity. By above arguments, one can deduce that $\omega_{\eta_*-\varepsilon}\ge 0$ in $\R\times\overline{\Omega}$. This contradicts the definition of $\eta_*$. The existence of $(s_*,x_*,y_*)$ implies that there exists $(t_*,x_*,y_*)$ such that $\omega_{\eta_*}(x_*\cdot e - c t_*,x_*,y_*)=0$. Since $\omega_{\eta_*}$ satisfies $\mathcal{L}(\omega)=0$ in its parabolic form, the strong maximum principle and Hopf Lemma show that $\omega_{\eta_*}\equiv 0$, which is impossible. Therefore, the kernel of $\mathcal{L}$ is reduced to $\{0\}$.

{\it Step~2: the range of $\mathcal{L}$ is closed.} Let $\theta$ be  a $C^2(\overline{\Omega})$ nonpositive periodic function satisfying \eqref{F-theta}.
Let $h$ be a smooth function such that $h(s)=0$ for $s\le -1$ and $h(s)=1$ for $s\ge 0$.
Let $d$ be the function defined for all $(s,x,y)\in \R\times\overline{\Omega}$ by 
$$d(s,x,y)=\varphi_r(x,y) e^{-r s} h(s+\theta(x,y)) +\psi_1(x,y)(1- h(s+\theta(x,y))).$$
For $\mathcal{L}\omega=\widetilde{g}$ where $\omega\in D(\mathcal{L})$ and $\widetilde{g}\in L_{\rho}^2$, let 
$$\omega(s,x,y)=d(s,x,y)v(s,x,y) \hbox{ and } \widetilde{g}(s,x,y)=d(s,x,y)g(s,x,y).$$ 
Then, one can get that 
\begin{align*}
\hat{\mathcal{L}}v:=&-(e\partial_s +\nabla)\cdot (A(e\partial_s+\nabla) v) +q\cdot (\nabla v +e v_s) - c^* v_s -f_u(x,y,\phi) v\\
&+\frac{v}{d} \Big(-(e\partial_s +\nabla)\cdot (A(e\partial_s+\nabla) d) +q\cdot (\nabla d +e d_s) - c^* d_s \Big) \\
&-\frac{2}{d}(e\partial_s d +\nabla d)\cdot (A(e\partial_s +\nabla)v)=g, \hbox{ in $\R\times\overline{\Omega}$},
\end{align*}
and
$$\nu A e v_s +\nu A\nabla v=0, \hbox{ on $\R\times\partial\Omega$}.$$
Let $\hat{\mathcal{L}} v_n=g_n\rightarrow g$ in $L^2$. Assume that $\|v_n\|_{L^2}$ is unbounded. Then, let $\omega_n=\frac{v_n}{\|v_n\|_{L^2}}$. One has $\|\omega_n\|_{L^2}=1$. Now, 
\be\label{hatLomega}
\hat{L}\omega_n=\frac{g_n}{\|v_n\|_{L^2}}\rightarrow 0 \hbox{ as $n\rightarrow +\infty$ and } \nu A e(\omega_n)_s +\nu A\nabla\omega_n=0 \hbox{ on $\R\times\partial\Omega$}.
\ee
Notice that $d \omega_n$ satisfies $\mathcal{L}(d \omega_n)=d g_n/\|v_n\|_{L^2}$ and 
$$M(d \omega_n)=\mathcal{L}(d \omega_n)+\beta d \omega_n.$$
By the proofs of Lemmas 2.2-2.4 of \cite{X}, one can get that $d \omega_n$ is bounded in $H^1_{\rho}$ and hence $\omega_n$ is bounded in $H^1$. Then, a subsequence of $\omega_n$ converges in $H^1$ weakly and in $L^2_{loc}$ strongly to some $\omega_0$ in $H^1$. Moreover, $\hat{\mathcal{L}}\omega_0=0$ in $\R\times\overline{\Omega}$ and $\nu A e(\omega_0)_s +\nu A\nabla\omega_0=0$ on $\R\times\partial\Omega$ by \eqref{hatLomega}. Since the kernel of $\mathcal{L}$ is reduced to $\{0\}$, then $\omega_0\equiv 0$. Take $N>0$  sufficiently large such that
$$|f_u(x,y,0)-f_u(x,y,\phi)|<k(r)+c^*r \hbox{ for $s\ge N$ and $(x,y)\in \overline{\Omega}$},$$
and
$$|f_u(x,y,1)-f_u(x,y,\phi)|<\mu_1 \hbox{ for $s\le -N$ and $(x,y)\in \overline{\Omega}$},$$
By the definition of $d(s,x,y)$, one also has that
\begin{align*}
\hat{L}v=&-(e\partial_s +\nabla)\cdot (A(e\partial_s+\nabla) v) +q\cdot (\nabla v +e v_s) - c^* v_s+( k(r)+c^* r +f_u(x,y,0)\\
&-f_u(x,y,\phi)) v -\frac{2}{e^{-r s} \varphi_r}(e\partial_s +\nabla )(e^{-r s} \varphi_r)\cdot (A(e\partial_s +\nabla)v), \hbox{ for $s\ge N$ and $(x,y)\in \overline{\Omega}$},
\end{align*}
and
\begin{align*}
\hat{L}v=&-(e\partial_s +\nabla)\cdot (A(e\partial_s+\nabla) v) +q\cdot (\nabla v +e v_s) - c^* v_s+( \mu_1 +f_u(x,y,0)-f_u(x,y,\phi)) v \\
&-\frac{2}{\psi_1} \nabla\psi_1\cdot (A(e\partial_s +\nabla)v), \hbox{ for $s\le -N$ and $(x,y)\in \overline{\Omega}$},
\end{align*}
Let $\chi(s)$ be a smooth function such that $\chi(s)=0$ for $s\le N$, $\chi(s)=s-N$ for $N\le s\le N+1$ and $\chi(s)=1$ for $s\ge N+1$. Let $m(x,y)$ be the smooth positive solution of the equation
$$\nabla\cdot (A\nabla m)-\nabla\cdot ((q+2rAe-2A\frac{\nabla\varphi_r}{\varphi_r})m)=0.$$
such that $\min_{\mathcal{C}} m(x,y)=1$. Existence of such a solution follows from the fact that the adjoint problem $\nabla\cdot (A\nabla m) +(q+2rAe -2A\nabla\varphi_r/\varphi_r)\nabla m=0$. By integrating \eqref{hatLomega} against $\chi(s) m(x,y) \omega_n$, one gets that
\begin{align*}
\int_{[N,+\infty)\times \mathcal{C}} m \chi (e\partial_s \omega+\nabla \omega_n)A(e\partial_s \omega+\nabla \omega_n) +   (k(r)+c^* r +f_u(x,y,0)-f_u(x,y,\phi))m \chi \omega_n^2\\
+ \int_{[N,N+1]\times\mathcal{C}} \frac{1}{2}\omega_n^2[c^*-2eA\nabla m -(q\cdot e +2r eAe -2\frac{eA\nabla\varphi_r}{\varphi})m]\\
=\int_{[N,+\infty)\times \mathcal{C}} \frac{g_n m \chi \omega_n}{\|v_n\|_{L^2}}\rightarrow 0.
\end{align*}
Since both first two terms on the left side are nonnegative and $\omega_n\rightarrow 0$ in $L^2_{loc}(\R\times\mathcal{C})$, one has that they converge to $0$ as $n\rightarrow +\infty$. In particular, $\int_{[N,+\infty)\times \mathcal{C}}\omega_n^2\rightarrow 0$ as $n\rightarrow +\infty$. Similar arguments can lead to that $\int_{(-\infty,-N]\times \mathcal{C}}\omega_n^2\rightarrow 0$ as $n\rightarrow +\infty$. Thus, $\omega_n$ converges to $0$ strongly in $L^2(\R\times\mathcal{C})$ as $n\rightarrow +\infty$ which contradicts to $\|\omega_n\|_{L^2}=1$. This means that the sequence $v_n$ is bounded in $L^2$.

Since  $M(d v_n)=L(d v_n)+\beta d v_n$, one can get that $d v_n$ is bounded in $H^1_{\rho}$ and hence $v_n$ is bounded in $H^1$. Therefore, a subsequence converges weakly in $H^1$ to some $v_0$ such that $\hat{\mathcal{L}}v_0 =g$.

{\it Step~3: the kernel of $\mathcal{L}^*$ is reduced to $\{0\}$.} Take $\beta>0$ such that
$$\beta>\sup_{(x,y)\in \overline{\Omega}, u\in [0,1]} |f_u(x,y,u)|.$$ 
Then, by Lemma~\ref{lemma-M}, the operator $M=\mathcal{L}+\beta$ is invertible. If there is $v^*\neq 0$ in the kernel of $\mathcal{L}^*$, then the arguments in p. 220 of \cite{X} imply that there is $v\in D(\mathcal{L})$ such that $\mathcal{L} v=0$ which contradicts that the kernel of $\mathcal{L}$ is reduced to $\{0\}$.
\end{proof}
\vskip 0.3cm

Since $k(\lambda)$ and $\varphi_{\lambda}$ are analytic with respect to $\lambda$ (see \cite{H2008}) and $c^*>c_0$, then it follows from the definition of $\lambda_{c^*}$ that $\lambda_{c^*-\tau}$ and $\varphi_{c^*-\tau}$ are smooth with respect to $\tau$ for $\tau$ small enough. Let $G(\omega,\tau)$ be defined by 
$$G(\omega,\tau):=\mathcal{L}_{c^*-\tau}\Big((B e^{-\lambda_{c^* -\tau} s} \varphi_{c^*-\tau} -B e^{-\lambda_* s} \varphi_* )h(s+\theta)+\omega\Big) -f(x,y,\Phi)+f(x,y,\phi)+ \tau \phi_s.$$
Then, by Lemma~\ref{lemma2.1}, one can check that $G$ is a function from $D(\mathcal{L})\times\R$ to $L^2_{\rho}$. One can also easily verify that $G$ is continuous with respect to $(\omega,\tau)$ and continuously differentiable with respect to $\omega$, $\tau$. Notice that $\partial_{\omega} G(0,0)=\mathcal{L}:D(\mathcal{L})\rightarrow L^2_{\rho}$ is invertible.
By the implicit function theory, one has the following lemma.

\begin{lemma}\label{lemma-omega}
For $\tau$ small enough, there is a solution $\omega_{\tau}(s,x,y)$ of \eqref{eq-omega} such that 
$$\omega_{\tau}(s,x,y)= O(e^{-r s }) \hbox{ as $s\rightarrow +\infty$}.$$ 
\end{lemma}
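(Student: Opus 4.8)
The plan is to obtain $\omega_{\tau}$ from the implicit function theorem applied to the map $G:D(\mathcal{L})\times\R\to L^2_{\rho}$ at the base point $(\omega,\tau)=(0,0)$, and then to bootstrap the decay of the resulting solution. The first point to check is that $(0,0)$ is a zero of $G$. At $\tau=0$ one has $\lambda_{c^*-\tau}=\lambda_{c^*}=\lambda_*$, and by the normalization $\|\varphi_{c^*-\tau}\|_{L^{\infty}(\mathcal{C})}=1$ also $\varphi_{c^*-\tau}=\varphi_*$; hence the bracket $Be^{-\lambda_{c^*-\tau}s}\varphi_{c^*-\tau}-Be^{-\lambda_*s}\varphi_*$ vanishes identically, so $\Phi=\phi$ when $\omega=0$, and $G(0,0)=\mathcal{L}_{c^*}(0)-f(x,y,\phi)+f(x,y,\phi)+0=0$. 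The remaining hypotheses are already recorded in the excerpt: $G$ is continuous in $(\omega,\tau)$, continuously differentiable in $\omega$ and in $\tau$, and $\partial_{\omega}G(0,0)=\mathcal{L}:D(\mathcal{L})\to L^2_{\rho}$ is invertible by the preceding lemma. It is worth making explicit why $G$ actually lands in $L^2_{\rho}$ with the weight $\rho(s)=1+e^{2rs}$, $r>\lambda_*$: for $s$ large the eigenfunction correction $Be^{-\lambda_{c^*-\tau}s}\varphi_{c^*-\tau}h(s+\theta)$ is annihilated by $\mathcal{L}_{c^*-\tau}$ because $\lambda_{c^*-\tau}$ is a root of $k(\lambda)+(c^*-\tau)\lambda=0$, while the contribution of $-Be^{-\lambda_*s}\varphi_*h(s+\theta)$ together with $\tau\phi_s$ cancels to leading order thanks to $\phi_s\sim-B\lambda_*e^{-\lambda_*s}\varphi_*$ from Lemma~\ref{lemma2.1}; the residuals and the nonlinear difference $f(x,y,\phi)-f(x,y,\Phi)$ are $O(e^{-rs})$ by the same lemma and the $C^{1,\alpha}$-regularity of $f$ in $u$, hence square-integrable against $\rho$.

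With these verifications in hand, the implicit function theorem furnishes $\tau_0>0$ and a $C^1$ curve $\tau\mapsto\omega_{\tau}\in D(\mathcal{L})$, defined for $|\tau|<\tau_0$, with $\omega_0=0$ and $G(\omega_{\tau},\tau)=0$; by the definition of $G$, $\omega_{\tau}$ solves \eqref{eq-omega}. This settles the existence part.

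For the asymptotics, note that $\omega_{\tau}\in D(\mathcal{L})\subset H^1_{\rho}$ forces $\int_{\R\times\mathcal{C}}(\omega_{\tau}^2+|\nabla\omega_{\tau}|^2)e^{2rs}\,ds\,dx\,dy<+\infty$, so $\|\omega_{\tau}\|_{H^1([s,s+1]\times\mathcal{C})}=O(e^{-rs})$. I would then rewrite \eqref{eq-omega} for the fixed function $\omega_{\tau}$ in its parabolic form, as is done for $\phi$ throughout the excerpt, obtaining a linear uniformly parabolic equation for $\omega_{\tau}$ whose inhomogeneous term is, again by Lemma~\ref{lemma2.1}, $O(e^{-rs})$ in $L^2$ on each translated cylinder $[s,s+1]\times\mathcal{C}$. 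Interior parabolic $L^p$ estimates followed by Schauder estimates on these cylinders then upgrade the local $L^2$ bound $O(e^{-rs})$ to $\|\omega_{\tau}\|_{L^{\infty}([s,s+1]\times\mathcal{C})}=O(e^{-rs})$ uniformly in $(x,y)\in\overline{\Omega}$, which is the claimed decay.

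The main obstacle is this last step: the operator $\mathcal{L}_{c^*-\tau}$ is only degenerate elliptic (elliptic in the directions $e\partial_s+\nabla$), and the nonlinearity feeds $\omega_{\tau}$ back into $\Phi$, so one must pass to the genuinely parabolic form and linearize carefully enough that every inhomogeneous contribution is dominated by $e^{-rs}$; this is precisely the bootstrap already carried out in the excerpt when showing that a kernel element of $\mathcal{L}$ satisfies $\omega=O(e^{-rs})$, so it is routine in context. A secondary, more mechanical point is to confirm that the Nemytskii-type map $\omega\mapsto f(\cdot,\cdot,\Phi(\omega,\tau))$ is continuously differentiable from $D(\mathcal{L})$ into $L^2_{\rho}$; this follows from the bounds of Lemma~\ref{lemma2.1}, the local embedding of $D(\mathcal{L})$ into $L^{\infty}$, and the $C^{1,\alpha}$-smoothness of $f$ with respect to $u$.
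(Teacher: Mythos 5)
Your proposal follows the paper's own route: the paper proves this lemma precisely by applying the implicit function theorem to $G$ at $(\omega,\tau)=(0,0)$, using Lemma~\ref{lemma2.1} to see that $G$ maps $D(\mathcal{L})\times\R$ into $L^2_{\rho}$, the preceding lemma for the invertibility of $\partial_{\omega}G(0,0)=\mathcal{L}$, and the membership $\omega_{\tau}\in D(\mathcal{L})\subset H^1_{\rho}$ (plus regularity) for the decay $O(e^{-rs})$. Your additional bookkeeping matches this, up to one harmless imprecision: $f(x,y,\phi)-f(x,y,\Phi)$ is not $O(e^{-rs})$ by itself; the decay at rate $r$ only appears after its linear part $f_u(x,y,0)(\Phi-\phi)$ is cancelled against the zero-order terms produced by $\mathcal{L}_{c^*-\tau}$ acting on the exponential corrections, with $\tau\phi_s$ absorbing the $\tau$-shift term as you indicate.
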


Notice that Lemma~\ref{lemma-omega} contradicts the definition of $c_*$.
Therefore, \eqref{asy1} is violated and $\phi(s,x,y)$ does not decay as the rate $e^{-\lambda_* s}$. Then, we are ready to prove Theorem~\ref{Th1}.
\vskip 0.3cm

\begin{proof}[Proof of Theorem~\ref{Th1}]
{\it Step~1: the upper bound.} Notice that the proof of Proposition~4.3 of \cite{H2008} does not require the KPP assumption. It means that
$$ \limsup_{s\rightarrow +\infty}\Big[\max_{(x,y)\in \overline{\Omega}}\Big(\frac{\phi(s,x,y)}{e^{-\lambda_* s}}\Big)\Big]<+\infty.$$
Then, the fact that $\phi(s,x,y)$ does not decay as the rate $e^{-\lambda_* s}$ implies that
$$\liminf_{s\rightarrow +\infty} \Big[\min_{(x,y)\in \overline{\Omega}}\Big(\frac{\phi(s,x,y)}{e^{-\lambda_* s}}\Big)\Big]=0.$$
Since $\nabla \phi/\phi$ is bounded in $\R\times\overline{\Omega}$ (by the Harnack inequality applied to $\phi(x\cdot e-c^* t,x,y)$) and $\phi$ is periodic in $(x,y)$,  for any $\varepsilon>0$, there is $s_\varepsilon$ such that 
\be\label{s-epsilon}
\phi(s_\varepsilon,x,y)\le  \varepsilon e^{-\lambda_* s_\varepsilon}\varphi_*(x,y).
\ee

Now, let $\lambda_*^+:=\max\{\lambda>0; k(\lambda)+c^*\lambda=0\}$ if $F_{c^*}$ is of two points and let $\lambda_*^+$ be a sufficiently large constant larger than $\lambda_*$ if $F_{c^*}$ is a singleton.
Let $\varphi_*^+$ be the unique positive principal eigenfunction of $L_{\lambda_*^+} \varphi=k(\lambda_*^+) \varphi$ such that $\|\varphi_*^+\|_{L^{\infty}(\mathcal{C})}=1$.
Denote 
$$\kappa=\Big(\min_{(x,y)\in\overline{\Omega}} \psi_*,\min_{(x,y)\in\overline{\Omega}} \psi_*^+\Big) \hbox{ and } \kappa_r=\min_{(x,y)\in\overline{\Omega}} \varphi_r.$$
For $n\in \mathbb{N}$ and $\sigma_0>0$, let 
$$\sigma=\frac{\alpha \lambda_*^+\sigma_0}{2(\lambda_*^+ -\lambda_*)},\ \sigma_n=\sigma_0 + n\sigma,\ B_n=e^{-\alpha \lambda_*^+ (n-1)\sigma} \hbox{ and } \overline{B}:=\sum_{i=0}^{+\infty} B_i<+\infty.$$
For $n\in\mathbb{N}$ and $n\ge 1$, define 
$$\theta_{\sigma_n}=\frac{\delta (3\sum_{i=0}^{n-1} B_i)^{1+\alpha} }{(k(r)+c^*r) \kappa_r}e^{-[(1+\alpha)\lambda_*^+ -r]\sigma_{n-1}} \hbox{ and }
 \widetilde{\theta}_{\sigma_{n}}=\frac{\theta_{\sigma_{n}}}{\kappa}e^{-(r-\lambda_*)\sigma_{n-1}},$$
where $\lambda_*< r:=\min\{(1+\alpha)\lambda_*,(\lambda_*+\lambda_*^+)/2\}<(1+\alpha)\lambda_*$ and $\delta$ is defined by \eqref{eq-F}. 
Notice that $\sigma_n=\sigma_0 +n\sigma$, $\theta_{\sigma_n}\rightarrow 0$ and $\widetilde{\theta}_{\sigma_n}\rightarrow 0$ as $\sigma_0\rightarrow +\infty$. Take $\sigma_0>0$ large enough such that
\be\label{sigma0}
\overline{B}e^{-\lambda_*^+ \sigma_{0}} \varphi_*^+(x,y)\le \gamma/3,\ \frac{3\sigma \delta (3\overline{B})^{1+\alpha}}{(k(r)+c^* r)\kappa_r \kappa^2} \max\{1,(1+\alpha)\lambda_*^+ -r, r-\lambda_* \} e^{-\frac{1}{2}\alpha\lambda_*^+ \sigma_0} \le 1
\ee
where $\gamma$ is defined by \eqref{eq-F} and
\be\label{theta-alpha}
\delta 3^{1+\alpha} \frac{\theta_{\sigma_n}^{\alpha}}{\kappa^{1+\alpha}} e^{-\alpha r\sigma_{n-1}}\le (k(r)+c^*r) \kappa_r.
\ee
By definitions of $\theta_{\sigma_n}$ and $\widetilde{\theta}_{\sigma_n}$, it follows from \eqref{sigma0} that
\be\label{sigma} 
\begin{aligned}
\widetilde{\theta}_{\sigma_n} e^{-\lambda_* \sigma_{n-1}} \varphi_*(x,y)=&\frac{1}{\kappa} e^{-(r-\lambda_*)\sigma_{n-1}} \frac{\delta (3\sum_{i=0}^{n-1}B_i)^{1+\alpha}}{(k(r)+c^* r) \kappa_r} e^{-[(1+\alpha)\lambda_*^+ -r] \sigma_{n-1}} e^{-\lambda_* \sigma_{n-1}}\varphi_*(x,y)\\
<& \frac{\delta (3\sum_{i=0}^{n-1}B_i)^{1+\alpha}}{(k(r)+c^* r)\kappa_r \kappa^2} e^{-(1+\alpha)\lambda_*^+ \sigma_{n-1}} \psi_*^+(x,y), \\
<& \sum_{i=0}^{n-1}B_i e^{-\lambda_*^+ \sigma_{n-1}} \varphi_*^+(x,y)\le \gamma/3,\quad
\hbox{ for all $(x,y)\in\overline{\Omega}$ and $n\ge 1$}.
\end{aligned}
\ee
By the definition of $\widetilde{\theta}_{\sigma}$, one also has that
\be\label{w-theta}
\widetilde{\theta}_{\sigma_{n}} e^{-\lambda_* s}\varphi_*(x,y)\ge \theta_{\sigma_n} e^{-r s}\varphi_r(x,y), \hbox{ for all $s\ge \sigma_{n-1}$ and $(x,y)\in \overline{\Omega}$}.
\ee

For $n\ge 1$ and $(t,x,y)\in\R\times\overline{\Omega}$, define $s=x\cdot e-c^*t$ and 
$$\overline{u}_{\sigma_n}(t,x,y)=\sum_{i=0}^{n-1}B_i e^{-\lambda_*^+ s}\varphi_*^+(x,y)+  \widetilde{\theta}_{\sigma_n} e^{-\lambda_* s}\varphi_*(x,y)+\theta_{\sigma_n} e^{-r s}\varphi_r(x,y).$$
Then, by \eqref{sigma}-\eqref{w-theta}, one has that $\overline{u}_{\sigma_n}(t,x,y)\le \gamma$ for $s\ge  \sigma_{n-1}$ and $(x,y)\in \overline{\Omega}$.
One can check from \eqref{eq-F} and $k(\lambda_*^+)+c^* \lambda_*^+\ge 0$ that for $s\ge \sigma_{n-1}$ and $(x,y)\in\overline{\Omega}$,
\begin{align*}
\mathscr{L}\overline{u}_{\sigma_n}:=& (\overline{u}_{\sigma_n})_t-\nabla \cdot (A(z)\nabla \overline{u}_{\sigma_n})+q(z)\cdot \nabla \overline{u}_{\sigma_n} -f(z,\overline{u}_{\sigma_n})\\
=& (k(\lambda_*^+)+c^* \lambda_*^+) e^{-\lambda_*^+ s}\varphi_*^+(x,y)+(k(r)+c^* r)\theta_{\sigma_n} e^{-r s}\varphi_r(x,y) +f_u(x,y,0)\overline{u}-f(x,y,\overline{u})\\
\ge& (k(r)+c^* r)\theta_{\sigma_n} e^{-r s}\varphi_r(x,y)  -\delta \Big(\sum_{i=0}^{n-1}B_i e^{-\lambda_*^+ s}\varphi_*^+  +  \widetilde{\theta}_{\sigma_n} e^{-\lambda_* s}\varphi_*+\theta_{\sigma_n} e^{-r s}\varphi_r\Big)^{1+\alpha},
\end{align*}
and $\nu A\nabla\overline{u}_n=0$ on $\R\times\partial\Omega$. For $s\ge \sigma_{n-1}$ and $(x,y)\in \overline{\Omega}$ such that 
$$\sum_{i=0}^{n-1}B_i e^{-\lambda_*^+ s} \varphi_*^+(x,y)\ge \widetilde{\theta}_{\sigma_n} e^{-\lambda_* s} \varphi_*(x,y)\ge \theta_{\sigma_n} e^{-r s} \varphi_r(x,y),$$
it follows from the definition of $\theta_{\sigma_n}$ that 
\begin{align*}
\mathscr{L}\overline{u}_{\sigma_n}\ge& (k(r)+c^*r) \theta_{\sigma_n} e^{-r s} \varphi_r(x,y)-\delta \Big(3\sum_{i=0}^{n-1}B_i \Big)^{1+\alpha} e^{-(1+\alpha)\lambda_*^+ s} (\varphi_*^+)^{1+\alpha}\\
\ge& \delta \Big(3\sum_{i=0}^{n-1}B_i \Big)^{1+\alpha} e^{[(1+\alpha)\lambda_*^+ -r](s-\sigma_{n-1})} e^{-(1+\alpha)\lambda_*^+ s} -\delta \Big(3\sum_{i=0}^{n-1}B_i \Big)^{1+\alpha} e^{-(1+\alpha)\lambda_*^+ s}\\
\ge& 0.
\end{align*}
For $s\ge \sigma_{n-1}$ and $(x,y)\in\overline{\Omega}$ such that $\widetilde{\theta}_{\sigma_n} e^{-\lambda_* s} \varphi_*(x,y)\ge \sum_{i=0}^{n-1}B_ie^{-\lambda_*^+ s} \varphi_*^+(x,y)$, it follows from \eqref{theta-alpha} that
\begin{align*}
\mathscr{L}\overline{u}_{\sigma_n}\ge& (k(r)+c^*r) \theta_{\sigma_n} e^{-r s} \varphi_r(x,y)-\delta 3^{1+\alpha} \widetilde{\theta}_{\sigma_n}^{1+\alpha} e^{-(1+\alpha)\lambda_* s} (\varphi_*)^{1+\alpha}\\
\ge& (k(r)+c^*r) \theta_{\sigma} e^{-r s} \kappa_r -\delta 3^{1+\alpha} \frac{\theta_{\sigma_n}^{1+\alpha}}{\kappa^{1+\alpha}} e^{-\alpha r \sigma_{n-1}} e^{-[(1+\alpha)\lambda_*-r] (s-\sigma_{n-1})} e^{-r s}\\
\ge& 0.
\end{align*}
In conclusion, one has that $\mathscr{L}\overline{u}_{\sigma_n}\ge 0$ for all $(t,x,y)\in\R\times\overline{\Omega}$ such that $s\ge \sigma_{n-1}$ and $n\ge 1$.

Since $\phi(+\infty,x,y)=0$, one can take a constant $\tau$ such that
\be\label{n=0}
\phi(s+\tau,x,y)< e^{-\lambda_*^+ \sigma_0} \psi_*^+(x,y), \hbox{ for all $s\ge \sigma_0$ and $(x,y)\in\overline{\Omega}$}.
\ee
Assume that 
\be\label{0sn-1}
\phi(s+\tau,x,y)\le \overline{u}_{\sigma_n}(t,x,y), \hbox{ for $\sigma_0\le s\le \sigma_{n-1}$ and $(x,y)\in\overline{\Omega}$},
\ee
which also means that 
$$\phi(s+\tau,x,y)\le \overline{u}_{\sigma_n}(t,x,y)|_{s=\sigma_{n-1}}, \hbox{ for all $s\ge \sigma_{n-1}$ and $(x,y)\in\overline{\Omega}$},$$
since $\phi(s,x,y)$ is decreasing in $s$.
Then, by \eqref{n=0}, above inequality holds for $n=1$.
Since $\widetilde{\theta}_{\sigma_n}>0$, it follows from \eqref{s-epsilon} that there is $s_{\sigma_n}> \sigma_{n-1}+\sigma=\sigma_n$ such that
$$\phi(s_{\sigma_n}+\tau,x,y)\le  \widetilde{\theta}_{\sigma_n} e^{-\lambda_* s_{\sigma_n}}\varphi_*(x,y)<\overline{u}_{\sigma_n}(t,x,y), \hbox{ for $\sigma_{n-1}\le s\le s_{\sigma_n}$ and $(x,y)\in\overline{\Omega}$}.$$
By Lemma~\ref{comparison}, it follows that
\be\label{n-1sn}
\phi(s+\tau,x,y)\le \overline{u}_{\sigma_n}(t,x,y), \hbox{ for $(t,x,y)\in\R\times\overline{\Omega}$ such that $\sigma_{n-1}\le s\le s_{\sigma_n}$}.
\ee
Notice that 
\begin{align*}
\theta_{\sigma_{n}}-\theta_{\sigma_{n+1}}=&\frac{\delta (3\sum_{i=0}^{n-1} B_i)^{1+\alpha} }{(k(r)+c^*r) \kappa_r}e^{-[(1+\alpha)\lambda_*^+ -r]\sigma_{n-1}} -\frac{\delta (3\sum_{i=0}^{n} B_i)^{1+\alpha} }{(k(r)+c^*r) \kappa_r}e^{-[(1+\alpha)\lambda_*^+ -r]\sigma_{n}}\\
\le& \frac{\sigma\delta (3\overline{B})^{1+\alpha} }{(k(r)+c^*r) \kappa_r} [(1+\alpha)\lambda_*^+ -r] e^{-[(1+\alpha)\lambda_*^+ -r]\sigma_{n-1}}
\end{align*}
and
\begin{align*}
\widetilde{\theta}_{\sigma_{n}}-\widetilde{\theta}_{\sigma_{n+1}}=& \frac{\theta_{\sigma_{n}}}{\kappa}e^{-(r-\lambda_*)\sigma_{n-1}}- \frac{\theta_{\sigma_{n+1}}}{\kappa}e^{-(r-\lambda_*)\sigma_{n}}\\
\le& \frac{\theta_{\sigma_{n}}-\theta_{\sigma_{n+1}}}{\kappa} e^{-(r-\lambda_*)\sigma_{n-1}} +\frac{\theta_{\sigma_n}}{\kappa}(e^{-(r-\lambda_*)\sigma_{n-1}} -e^{-(r-\lambda_*)\sigma_n})\\
\le&  \frac{\sigma\delta (3\overline{B})^{1+\alpha} }{(k(r)+c^*r) \kappa_r\kappa} [(1+\alpha)\lambda_*^+ -r] e^{-[(1+\alpha)\lambda_*^+ -\lambda_*]\sigma_{n-1}} \\
&+\frac{\sigma\delta (3\overline{B})^{1+\alpha} }{(k(r)+c^*r) \kappa_r\kappa} (r-\lambda_*) e^{-(1+\alpha)\lambda_*^+ \sigma_{n} +\lambda_* \sigma_{n-1} -r\sigma}
\end{align*}
Thus, for $\sigma_0\le s\le \sigma_n$, it follows from the definitions of $\sigma$, $B_n$ and \eqref{sigma0} that
\begin{align*}
\overline{u}_{\sigma_{n+1}}(t,x,y)=&\overline{u}_{\sigma_{n}}(t,x,y) +B_{n} e^{-\lambda_*^+ s} \varphi_*^+ + (\widetilde{\theta}_{\sigma_{n+1}}-\widetilde{\theta}_{\sigma_{n}}) e^{-\lambda_* s}\varphi_*+(\theta_{\sigma_{n+1}}-\theta_{\sigma_n}) e^{-r s}\varphi_r\\
\ge& \overline{u}_{\sigma_{n}}(t,x,y) +e^{-\lambda_*^+ s} \Big( B_n \kappa -(\widetilde{\theta}_{\sigma_{n}}-\widetilde{\theta}_{\sigma_{n+1}}) e^{(\lambda_*^+ -\lambda_*)\sigma_n} - (\theta_{\sigma_{n}}-\theta_{\sigma_{n+1}}) e^{(\lambda_*^+ -r)\sigma_n} \Big)\\
\ge& \overline{u}_{\sigma_{n}}(t,x,y) +e^{-\lambda_*^+ s} \Big( B_n \kappa - 3\frac{\sigma\delta (3\overline{B})^{1+\alpha} }{(k(r)+c^*r) \kappa_r\kappa} \max\{(1+\alpha)\lambda_*^+ -r,r-\lambda_*\}\\
& e^{-\frac{1}{2}\alpha\lambda_*^+ \sigma_0 -\alpha\lambda_*^+(n-1)\sigma} \Big)\\
\ge& \overline{u}_{\sigma_{n}}(t,x,y).
\end{align*}
Then, by \eqref{0sn-1} and \eqref{n-1sn}, one has that
$$\phi(s+\tau,x,y)\le \overline{u}_{\sigma_{n+1}}(t,x,y), \hbox{ for $\sigma_0\le s\le \sigma_{n}$ and $(x,y)\in\overline{\Omega}$}.$$
By iteration, the inequality \eqref{0sn-1} holds for all $n\in\mathbb{N}$ and $n\ge 1$. By passing $n$ to $+\infty$ in \eqref{0sn-1}, one finally has that
\be\label{upbound}
\phi(s+\tau,x,y)\le \overline{B} e^{-\lambda_*^+ s} \varphi_*^+(x,y), \hbox{ for $s\ge  \sigma_0$ and $(x,y)\in\overline{\Omega}$}.
\ee

If $F_{c^*}$ is a singleton, that is, $\{\lambda_*\}$, then it follows from Proposition~2.2 of \cite{H2008} that
$$\lim_{s\rightarrow +\infty} \frac{-\phi_s(s,x,y)}{\phi(s,x,y)}=\lambda_*.$$
This implies  $\phi(s,x,y)\ge C e^{-(\lambda_* +\varepsilon) s}$ as $s\rightarrow+\infty$ for some $\varepsilon>0$ and $C>0$ which contradicts  \eqref{upbound}. Thus, there is $\lambda_*^+>\lambda_*$ in $F_{c^*}$ and \eqref{upbound} still holds.

{\it Step~2: the lower bound.} We need to adjust some notions. For $n\in \mathbb{N}$ and $\sigma_0>0$, let 
$$\sigma=\frac{\alpha \lambda_*^+\sigma_0}{2(\lambda_*^+ - r)},\ \sigma_n=\sigma_0 + n\sigma.$$ 
Let
$$B_0=0,\ B_n=e^{-\alpha \lambda_*^+ (n-1)\sigma} \hbox{ for $n\ge 1$ and } \underline{B}:=1-\sum_{i=0}^{+\infty} B_i.$$
Take $\sigma_0>0$ large enough such that $\underline{B}>0$, 
\be\label{sigma0-u}
e^{-\lambda_*^+ \sigma_{0}} \varphi_*^+(x,y)\le \gamma,\ \frac{\delta}{(k(r)+c^* r)\kappa_r \kappa} \max\{(1+\alpha)e^{-\alpha\lambda_*^+ \sigma_0},\sigma[(1+\alpha)\lambda_*^+ -r] e^{-\frac{1}{2}\alpha\lambda_*^+ \sigma_0}\} \le \frac{1}{2}.
\ee
For $n\in\mathbb{N}$ and $n\ge 1$, define 
$$\theta_{\sigma_n}=\frac{\delta (1-\sum_{i=0}^{n-1} B_i)^{1+\alpha} }{(k(r)+c^*r) \kappa_r}e^{-[(1+\alpha)\lambda_*^+ -r]\sigma_{n-1}}.$$
For $t\in\R$ and $(x,y)\in\overline{\Omega}$, define $s=x\cdot e-c^*t$ and 
$$\underline{u}_{\sigma_n}(t,x,y)=\max\Big\{(1-\sum_{i=0}^{n-1} B_i)e^{-\lambda_*^+ s}\varphi_*^+(x,y)-\theta_{\sigma_n} e^{-r s}\varphi_r(x,y),0\Big\}.$$
Then, by \eqref{sigma0-u}, one has that $\underline{u}_{\sigma_n}(t,x,y)\le \gamma$ for $s\ge  \sigma_{n-1}$ and $(x,y)\in \overline{\Omega}$.
For $n\ge 1$ and $(t,x,y)\in \R\times\overline{\Omega}$ such that $s\ge \sigma_{n-1}$ and $\underline{u}_{\sigma_n}(t,x,y)>0$, one can check from the definition of $\theta_{\sigma_n}$ and \eqref{eq-F} that
\begin{align*}
\mathscr{L}\underline{u}_{\sigma_n}:=& (\underline{u}_{\sigma_n})_t-\nabla \cdot (A(z)\nabla \underline{u}_{\sigma_n})+q(z)\cdot \nabla \underline{u}_{\sigma_n} -f(z,\underline{u}_{\sigma_n})\\
=& -(k(r)+c^* r)\theta_{\sigma_n} e^{-r s}\varphi_r(x,y) +f_u(x,y,0)\underline{u}_{\sigma_n}-f(x,y,\underline{u}_{\sigma_n})\\
\le& -(k(r)+c^* r)\theta_{\sigma_n} e^{-r s}\varphi_r(x,y)  +\delta \underline{u}_{\sigma_n}^{1+\alpha}\\
\le& -(k(r)+c^* r)\theta_{\sigma_n} e^{-r s}\varphi_r(x,y)  +\delta (1-\sum_{i=0}^{n-1} B_i)^{1+\alpha}e^{-(1+\alpha)\lambda_*^+ s}(\varphi_*^+)^{1+\alpha}\\
\le& 0,
\end{align*}
and $\nu A\nabla\underline{u}_n=0$ on $\R\times\partial\Omega$.

Since $\phi(-\infty,x,y)=1$, one can take a constant $\eta$ such that
\be\label{n0-u} 
\phi(\sigma_0-\eta,x,y)\ge e^{-\lambda_*^+ \sigma_0} \varphi_*^+(x,y).
\ee
Assume that 
\be\label{u-n-1}
\phi(s-\eta,x,y)\ge \underline{u}_{\sigma_n}(t,x,y), \hbox{ for $\sigma_0\le s\le \sigma_{n-1}$ and $(x,y)\in \overline{\Omega}$},
\ee
which also means that
$$\phi(\sigma_{n-1}-\eta,x,y)\ge \underline{u}_{\sigma_n}(t,x,y), \hbox{ for $s\ge \sigma_{n-1}$ and $(x,y)\in \overline{\Omega}$},$$
since $\underline{u}_{\sigma_n}(t,x,y)$ is decreasing in $s$.
Then, by \eqref{n0-u}, above inequality holds for $n=1$. Since $r<\lambda_*^+$, there is $s_{\sigma_n}>\sigma_{n-1}+\sigma=\sigma_n$ such that
$$
\underline{u}_{\sigma_n}(t,x,y)|_{s=s_{\sigma_n}}=0\le \phi(s-\eta,x,y), \hbox{ for $\sigma_{n-1}\le s\le s_{\sigma_n}$ and $(x,y)\in\overline{\Omega}$}.
$$
Thus, by Lemma~\ref{comparison}, it follows that
\be\label{u-n-1i}
\phi(s-\eta,x,y)\ge \underline{u}_{\sigma_n}(t,x,y), \hbox{ for $\sigma_0\le s\le s_{\sigma_n}$ and $(x,y)\in\overline{\Omega}$}.
\ee
Notice that 
\begin{align*}
\theta_{\sigma_{n}}-\theta_{\sigma_{n+1}}=&\frac{\delta (1-\sum_{i=0}^{n-1} B_i)^{1+\alpha} }{(k(r)+c^*r) \kappa_r}e^{-[(1+\alpha)\lambda_*^+ -r]\sigma_{n-1}} -\frac{\delta (1-\sum_{i=0}^{n} B_i)^{1+\alpha} }{(k(r)+c^*r) \kappa_r}e^{-[(1+\alpha)\lambda_*^+ -r]\sigma_{n}}\\
\le& \frac{\sigma\delta }{(k(r)+c^*r) \kappa_r} [(1+\alpha)\lambda_*^+ -r] e^{-[(1+\alpha)\lambda_*^+ -r]\sigma_{n-1}} +\frac{\delta(1+\alpha) B_{n}}{(k(r)+c^*r) \kappa_r} e^{-[(1+\alpha)\lambda_*^+ -r]\sigma_{n}}.
\end{align*}
Thus, for $\sigma_0\le s\le \sigma_n$, it follows from the definitions of $\sigma$, $B_n$ and \eqref{sigma0-u} that 
\begin{align*}
\underline{u}_{\sigma_{n+1}}(t,x,y)=&\underline{u}_{\sigma_{n}}(t,x,y) -B_{n} e^{-\lambda_*^+ s} \varphi_*^+ +(\theta_{\sigma_{n}}-\theta_{\sigma_{n+1}}) e^{-r s}\varphi_r\\
\le& \underline{u}_{\sigma_{n}}(t,x,y) +e^{-\lambda_*^+ s} (-B_n \kappa +(\theta_{\sigma_{n}}-\theta_{\sigma_{n+1}}) e^{(\lambda_*^+-r)\sigma_n})\\
\le& \underline{u}_{\sigma_n}(t,x,y) + e^{-\lambda_*^+ s}  (-B_n \kappa +\frac{\sigma\delta }{(k(r)+c^*r) \kappa_r} [(1+\alpha)\lambda_*^+ -r] e^{-\alpha\lambda_*^+\sigma_{n-1} +(\lambda_*^+-r)\sigma}\\
& +\frac{\delta(1+\alpha) B_{n}}{(k(r)+c^*r) \kappa_r} e^{-\alpha\lambda_*^+ \sigma_{n}})\\
\le& \underline{u}_{\sigma_n}.
\end{align*}
By \eqref{u-n-1} and \eqref{u-n-1i}, one has that
$$\phi(s-\eta,x,y)\ge \underline{u}_{\sigma_{n+1}}(t,x,y), \hbox{ for $\sigma_0\le s\le \sigma_{n}$ and $(x,y)\in \overline{\Omega}$}.$$
By iteration, the inequality \eqref{u-n-1} holds for all $n\in\mathbb{N}$ and $n\ge 1$. By passing $n$ to $+\infty$ in \eqref{u-n-1}, one finally has that
$$\phi(s-\eta,x,y)\ge \underline{B} e^{-\lambda_*^+ s} \varphi_*^+(x,y), \hbox{ for $s\ge \sigma_0$ and $(x,y)\in\overline{\Omega}$}.$$

{\it Step 3: convergence to $e^{-\lambda_*^+ s}\varphi_*^+(x,y)$.} By Step~2 and Step~3, one has
$$0<B:=\liminf_{s\rightarrow +\infty}\Big(\min_{(x,y)\in\overline{\Omega}}\frac{\phi(s,x,y)}{e^{-\lambda_*^+ s}\varphi_*^+(x,y)}\Big)\le \limsup_{s\rightarrow +\infty}\Big(\max_{(x,y)\in\overline{\Omega}}\frac{\phi(s,x,y)}{e^{-\lambda_*^+ s}\varphi_*^+(x,y)}\Big):=B'<+\infty.$$
By the definition of $B'$,  there is a sequence $(s_n,x_n,y_n)_{n\in\mathbb{N}}$ in $\R\times\overline{\Omega}$ such that
$$s_n\rightarrow +\infty \hbox{ and } \frac{\phi(s_n,x_n,y_n)}{e^{-\lambda_*^+ s_n}\varphi_*^+(x_n,y_n)}\rightarrow B' \hbox{ as $n\rightarrow +\infty$}.$$
By the same argument as in the proof of Theorem~1.3 of \cite{H2008}, one can get that
$$\frac{\phi(s_n,x,y)}{e^{-\lambda_*^+ s_n}\varphi_*^+(x,y)}\rightarrow B' \hbox{ as $n\rightarrow +\infty$, uniformly in $(x,y)\in \overline{\Omega}$}.
$$
Assume that $B'>B$. Then, for $n$ large enough, one has that
$$\phi(s_n,x,y)\ge \frac{B+B'}{2} e^{-\lambda_*^+ s_n} \varphi_*^+(x,y) \hbox{ for all $(x,y)\in \overline{\Omega}$}.$$
Now, we can do the same argument as in Step~3 by replacing $\sigma_0$ by $s_n$ to get that
$$\phi(s,x,y)\ge (\frac{B+B'}{2}-1+\underline{B}) e^{-\lambda_*^+ s} \varphi_*^+(x,y), \hbox{ for $s\ge s_n$ and $(x,y)\in \overline{\Omega}$}.$$
Notice that $\underline{B}\rightarrow 1$ as $\sigma_0\rightarrow +\infty$. For $n$ large enough, one has $(B+B')/2-1+\underline{B}>B$.
This contradicts the definition of $B$. Therefore, $B=B'$.

The proof is thereby complete.
\end{proof}

\subsection{Proof of Theorem~\ref{Th2}}

\begin{proof}[Proof of Theorem~\ref{Th2}]
Let $\lambda_c=\min F_c$ and $\lambda_c^+$ be equal to $\max F_c$ if $F_c$ is of two points, to a large positive constant if $F_c$ is a singleton. Proposition~4.3 of \cite{H2008} has shown us that
$$ \limsup_{s\rightarrow +\infty}\Big[\max_{(x,y)\in \overline{\Omega}}\Big(\frac{\phi(s,x,y)}{e^{-\lambda_c s}}\Big)\Big]<+\infty.$$ Assume that $\phi(s,x,y)$ does not decay as the rate $e^{-\lambda_c s}$. Then, by following Step~2 of the proof of Theorem~\ref{Th1}, there is $\overline{B}>0$ such that 
$$\phi(s,x,y) \le \overline{B} e^{-\lambda_c^+ s} \varphi_{\lambda_c^+}(x,y), \hbox{ for $s$ large enough and $(x,y)\in\overline{\Omega}$}.$$
This actually contradicts Theorem~1.5 of \cite{H2008}, that is, $\ln \phi(s,x,y)\sim -\lambda_c s$ as $s\rightarrow+\infty$ and $(x,y)\in\overline{\Omega}$. Thus, $\phi(s,x,y)$ satisfies 
$$0<\liminf_{s\rightarrow +\infty} \Big[\min_{(x,y)\in \overline{\Omega}}\Big(\frac{\phi(s,x,y)}{e^{-\lambda_c s}}\Big)\Big]\le \limsup_{s\rightarrow +\infty}\Big[\max_{(x,y)\in \overline{\Omega}}\Big(\frac{\phi(s,x,y)}{e^{-\lambda_c s}}\Big)\Big]<+\infty.$$
By the proof of Theorem~1.3 of \cite{H2008}, one can get that there exists $B>0$ such that 
$$
\phi(s,x,y)\sim B e^{-\lambda_c s}\varphi_{\lambda_c}(x,y), \hbox{ as $s\rightarrow +\infty$ uniformly in $(x,y)\in\overline{\Omega}$}.
$$
This completes the proof.
\end{proof}

\section{Stability of the pushed front}

This section is devoted to the proof of stability of the pushed front. We first prove a Liouville type result.

\begin{proposition}\label{Lio}
Assume that $v(t,x,y)$ is a solution of \eqref{RD} satisfying
\be\label{trapv}
\phi(x -c^* t-\tau_1,x,y)\le v(t,x,y)\le \phi(x-c^* t-\tau_2,x,y),
\ee
for some $\tau_1<\tau_2$. Then, there is $\tau\in [\tau_1,\tau_2]$ such that
$$v(t,x,y)=\phi(x -c^* t+\tau,x,y), \hbox{ for all $(t,x,y)\in \R\times\overline{\Omega}$}.$$
\end{proposition}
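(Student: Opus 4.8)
The plan is to use a sliding method combined with the exponential asymptotics of $\phi$ near its unstable state $0$, which is exactly what Theorem~\ref{Th1} furnishes. Define
$$\tau^* = \sup\{\tau \le \tau_2 : v(t,x,y) \ge \phi(x - c^*t - \tau, x, y) \ \text{for all } (t,x,y) \in \R\times\overline{\Omega}\}.$$
By \eqref{trapv} the set over which the supremum is taken contains $\tau_1$, so $\tau^*$ is well defined with $\tau_1 \le \tau^* \le \tau_2$, and by continuity $v(t,x,y) \ge \phi(x - c^*t - \tau^*, x, y)$ everywhere. The goal is to show this inequality is an identity; once that is established, the conclusion follows with $\tau = -\tau^*$ (up to the sign convention in the statement), since a strict inequality somewhere would, via the strong maximum principle for the parabolic equation satisfied by the difference, propagate and then allow $\tau^*$ to be increased slightly — a contradiction — unless $\tau^* = \tau_2$, in which case one argues symmetrically from above.

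First I would set $w(t,x,y) = v(t,x,y) - \phi(x - c^*t - \tau^*, x, y) \ge 0$, which satisfies a linear parabolic equation of the form $w_t - \nabla\cdot(A\nabla w) + q\cdot\nabla w = c(t,x,y) w$ with $c$ bounded (here $c$ comes from writing $f(x,y,v) - f(x,y,\phi) = c\,w$ using the $C^1$ regularity of $f$ in $u$), together with the oblique boundary condition $\nu A\nabla w = 0$ on $\partial\Omega$. If $w \not\equiv 0$, the strong maximum principle and Hopf's lemma give $w > 0$ in $\R\times\overline\Omega$. The crux is then to show $\tau^*$ can be strictly increased, i.e. that for some $\epsilon>0$ one still has $v \ge \phi(\cdot - c^*t - \tau^* - \epsilon, \cdot)$. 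For this I would split $\overline\Omega\times\R$ in the moving frame variable $s = x - c^*t$ into three regions: a compact middle region $s \in [-M, M]$, a far region $s \ge M$ (near the state $0$), and a far region $s \le -M$ (near the state $1$). On the compact region, strict positivity of $w$ plus compactness gives a uniform lower bound $w \ge \delta_0 > 0$, and since $\phi$ is Lipschitz in $s$ (indeed $\phi_s$ is bounded, and by monotonicity $\phi_s<0$), a small shift costs at most $O(\epsilon)$, so the inequality persists for $\epsilon$ small.

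The hard part — and where Theorem~\ref{Th1} and its companion on $\phi_s$ (Lemma~\ref{lemma2.1}-type estimates, here at the rate $\lambda_*^+$) enter decisively — is the two far regions. Near $s \to +\infty$ one uses $\phi(s,x,y) \sim B e^{-\lambda_*^+ s}\varphi_*^+(x,y)$ and the matching decay $-\phi_s \sim B\lambda_*^+ e^{-\lambda_*^+ s}\varphi_*^+$; then $\phi(s-\tau^*-\epsilon,\cdot) - \phi(s-\tau^*,\cdot) \le \epsilon \sup(-\phi_s)(\cdot - \tau^*) = O(\epsilon e^{-\lambda_*^+ s})$, which is of the same order as $\phi$ itself and as the known lower bound on $w$ inherited from $v \ge \phi(\cdot-\tau^*,\cdot)$ — so the shifted inequality can be absorbed provided $\epsilon$ is small relative to the ratio $w/\phi$, which near $+\infty$ is bounded below because both $v$ and $\phi(\cdot-\tau^*,\cdot)$ decay at the \emph{same} exponential rate $\lambda_*^+$ dictated by Theorem~\ref{Th1} applied to both (this is precisely why the exponential behavior was needed). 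Symmetrically, near $s \to -\infty$ the state is $1$, which is linearly stable ($\mu_1 > 0$), and $1 - \phi$ decays exponentially; the same comparison works using the stability of $1$, or more simply $\phi$ is close to $1$ there and a small shift is harmless while $v$ stays bounded below $1$. Combining the three regions yields $v \ge \phi(\cdot - c^*t - \tau^* - \epsilon, \cdot)$, contradicting maximality of $\tau^*$ unless $w \equiv 0$. If instead $\tau^* = \tau_2$, then $v \ge \phi(\cdot - c^*t - \tau_2,\cdot)$ while \eqref{trapv} gives $v \le \phi(\cdot - c^*t - \tau_2,\cdot)$, forcing equality directly. I expect the delicate bookkeeping to be the estimate near $s\to+\infty$, matching the shift error $O(\epsilon e^{-\lambda_*^+ s})$ against the gap $w$ uniformly; the role of Theorem~\ref{Th1} is to guarantee $w$ does not decay faster than $e^{-\lambda_*^+ s}$, so that a genuinely positive margin exists to absorb the shift.
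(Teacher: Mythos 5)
Your overall sliding framework is reasonable, but two of its load-bearing steps have genuine gaps. First, your ``compact middle region'' is not compact: the set $\{(t,x,y):\,-M\le x-c^*t\le M\}$ is unbounded in $t$ (and $v$ is an arbitrary solution, not a front, so it has no translation structure in the moving frame). Strict positivity of $w=v-\phi(\cdot-c^*t-\tau^*,\cdot)$ there therefore does \emph{not} give a uniform bound $w\ge\delta_0>0$, and your ``small shift costs $O(\epsilon)$'' step collapses. This is exactly why the paper must treat the dichotomy on $\inf_{-\sigma_1\le x-c^*t\le\sigma_1}(v-\phi(\cdot-\tau^*,\cdot))$: when this infimum is $0$ it extracts a sequence of space-time translations, uses spatial periodicity of \eqref{RD} and parabolic estimates to pass to a limit which is a shift of $\phi$ touching $v$, and rules this out by the far-field lower bound \eqref{vcphi2}; nothing in your proposal replaces that argument.

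Second, and more fundamentally, your treatment of the region $s\to+\infty$ assumes that $w/\phi$ stays bounded below there ``because $v$ and $\phi(\cdot-\tau^*,\cdot)$ decay at the same rate $\lambda_*^+$''. Theorem~\ref{Th1} applies to the pulsating front $\phi$, not to the trapped solution $v$; obtaining the exact behavior $v\sim D\,e^{-\lambda_*^+s}\varphi_*^+$ is a separate, nontrivial step (the paper reruns the Step~4 iteration of the proof of Theorem~\ref{Th1} for $v$). Even granting it, the margin $w/\phi$ at $+\infty$ tends to $D/(Be^{\lambda_*^+\tau^*})-1$, which is \emph{zero} precisely when $\tau^*$ equals the critical shift $\tau_*=\ln(D/B)/\lambda_*^+$ --- and that is the case one must ultimately face, since the sliding supremum is pinned to $\tau_*$. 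So at the maximal shift the difference may be $o(e^{-\lambda_*^+s})$, a shift of size $\epsilon$ costs $\approx\epsilon\lambda_*^+\phi$, and there is no margin to absorb it; your contradiction does not go through and you cannot conclude $w\equiv0$ this way. The paper's route is structurally different: it first identifies $\tau_*$ from the exact asymptotics of $v$, then shows $\tau^*=\tau_*$ by sliding only up to shifts strictly below $\tau_*$ (where \eqref{vcphi2} does give a genuine far-field margin, absorbed by the auxiliary subsolution $\varepsilon e^{-rs}\varphi_r$ with $r\in(\lambda_*,\lambda_*^+)$, $k(r)+c^*r>0$), and finally obtains equality by the symmetric squeeze from above at the same $\tau_*$, rather than by a strong maximum principle argument at the touching shift. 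To repair your proof you would need to add both the exact exponential asymptotics of $v$ and the non-compactness/translation argument; at that point you have essentially reconstructed the paper's proof.
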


\begin{proof}
By \eqref{trapv} and Theorem~\ref{Th1}, one knows that
$$0<D:=\liminf_{s\rightarrow +\infty}\Big(\min_{(x,y)\in\overline{\Omega}}\frac{v(t,x,y)}{e^{-\lambda_*^+ s}\varphi_*^+(x,y)}\Big)\le \limsup_{s\rightarrow +\infty}\Big(\max_{(x,y)\in\overline{\Omega}}\frac{v(t,x,y)}{e^{-\lambda_*^+ s}\varphi_*^+(x,y)}\Big):=D'<+\infty,$$
where $s=x-c^* t$. By following similar arguments as Step~4 of the proof of Theorem~1.2, one can get that $D=D'$. Thus, 
$$v(t,x,y)\sim D e^{-\lambda_*^+ s} \varphi_*^+(x,y) \hbox{ as $s\rightarrow +\infty$ uniformly in $(x,y)\in\overline{\Omega}$}.$$
Let $\tau_*=\ln(D/B)/\lambda_*^+$ where $B$ is defined in Theorem~\ref{Th1}. This means that 
\be\label{vcphi}
\frac{v(t,x,y)}{\phi(x-c^*t-\tau_*,x,y)}\rightarrow 1 \hbox{ as $x-c^*t\rightarrow +\infty$}.
\ee

Let
$$\tau^*=\max\{\tau\in \R; v(t,x,y)\ge \phi(x-c^* t-\tau',x,y) \hbox{ in $\R\times\overline{\Omega}$ for all $\tau'\le \tau$}\}.$$
Then, $\tau_1\le \tau^*\le \tau_2$ since $\phi_s<0$ and $\tau^*\le \tau_*$ by \eqref{vcphi}. Assume that $\tau^*<\tau_*$. Then, by \eqref{vcphi}, there exists $\sigma_0>0$ such that
\be\label{vcphi2}
v(t,x,y)\ge \phi(x-c^*t-(\tau^* +\tau_*)/2,x,y) \hbox{ for $(t,x,y)\in\R\times\overline{\Omega}$ such that $x-c^*t\ge \sigma_0$}.
\ee

Let $\rho$ be the constant such that
$$|f_u(x,y,1)-f_u(x,y,1-u)|\le \mu_1 \hbox{ for all $(x,y,u)\in \overline{\Omega}\times[0,\rho]$},$$
and let $\rho_{\tau}(t,x,y)=\tau\rho\psi_1(x,y)$ for each $\tau\in [0,1]$ where $\mu_1$ and $\psi_1(x,y)$ are defined by \eqref{mu1}. Then, $\rho_{\tau}(t,x,y)$ satisfies the weak stability condition (1.6) of \cite{H2008}. Take $r\in (\lambda_*,\lambda_*^+)$. Then, $k(r)+c^*r>0$. Since $\phi(-\infty,\cdot,\cdot)=1$ and $\phi(+\infty,\cdot,\cdot)=0$, there is $\sigma_1\ge \sigma_0>0$ such that
$$\phi(s-\tau_1,x,y)\ge 1-\rho, \hbox{ for $s\le -\sigma_1$ and $(x,y)\in\overline{\Omega}$},$$
and 
\be\label{tau2} 
\phi(s-\tau_2,x,y)\le \min\Big\{\gamma,(\frac{k(r)+c^*r}{\delta})^{\frac{1}{1+\alpha}}\Big\}, \hbox{ for $s\ge \sigma_1$ and $(x,y)\in \overline{\Omega}$},
\ee
where $\delta$ and $\gamma$ are defined in \eqref{eq-F}.

Assume that
$$\inf_{(t,x,y)\in \R\times\overline{\Omega},\ -\sigma_1\le x\cdot e-c^*t\le \sigma_1} \{v(t,x,y)-\phi(x\cdot e-c^*t-\tau^*,x,y)\}>0.$$
Then, by continuity, there is $0<\eta^*\le \tau_2-\tau^*$ such that for any $\eta\in [0,\eta^*]$,
$$\phi(x\cdot e-c^*t-(\tau^* +\eta),x,y)\le v(t,x,y), \hbox{ for $-\sigma_1\le x\cdot e-c^* t\le \sigma_1$}.$$
For any $\varepsilon>0$ and $(t,x,y)\in \R\times\overline{\Omega}$ such that $x-c^* t\ge \sigma_1$, define $s=x-c^* t$ and
$$\underline{v}(t,x,y)=\max\{\phi(x\cdot e-c^*t-(\tau^* +\eta),x,y)-\varepsilon e^{-r s} \varphi_r(x,y),0\}.$$
Since $r<\lambda_*^+$ and $\phi\sim Be^{-\lambda_*^+ s} \varphi_*^+(x,y)$ as $s\rightarrow +\infty$, there is $s_{\varepsilon}$ such that $s_{\varepsilon}\rightarrow +\infty$ as $\varepsilon\rightarrow 0$ and 
$$\underline{v}(t,x,y)=0\le v(t,x,y) \hbox{ for $x-c^*t \ge s_{\varepsilon}$}.$$
For $(t,x,y)\in \R\times\overline{\Omega}$ such that $x-c^*t\ge \sigma_1$ and $\underline{v}(t,x,y)>0$, one can check from \eqref{eq-F} and \eqref{tau2} that
\begin{align*}
\mathscr{L}\underline{v}:=& (\underline{v})_t-\nabla \cdot (A(z)\nabla \underline{v})+q(z)\cdot \nabla \underline{v} -f(z,\underline{v})\\
=& -(k(r)+c^* r)\varepsilon e^{-r s}\varphi_r(x,y) - f_u(x,y,0)\varepsilon e^{-r s}\varphi_r(x,y) +f(x,y,\phi)-f(x,y,\underline{v})\\
\le& -(k(r)+c^* r)\varepsilon e^{-r s}\varphi_r(x,y)+\varepsilon e^{-r s} \varphi_r(x,y)\delta \phi^{1+\alpha}\\
\le& 0,
\end{align*}
and $\nu A \nabla\underline{v}=0$ on $\R\times\partial\Omega$.
By the comparison principle, one has that
$$\underline{v}(t,x,y)\le v(t,x,y) \hbox{ for $\sigma_0\le x\cdot e-c^* t\le s_{\varepsilon}$}.$$
By passing $\varepsilon$ to $0$, one gets that
$$\phi(x-c^*t -(\tau^*+\eta),x,y)\le v(t,x,y), \hbox{ for $x\cdot e-c^* t\ge \sigma_1$}.$$
In the region where $x-c^* t\le -\sigma_1$,  we have $\phi(x -c^*t-(\tau^*+\eta),x,y)>1-\rho$. All assumptions of Lemma~2.1 of \cite{HR2011} are satisfied with $h=-\sigma_1$, $\overline{U}(t,x,y)=\phi(x-c^* t-(\tau^*+\eta),x,y)$, $\overline{\Phi}=\phi(\cdot -(\tau^*+\eta),\cdot,\cdot)$, $\underline{U}=v$ and $\underline{\Phi}=v((x-s)/c^*,x,y)$, apart from the fact that $\underline{\Phi}$ may not be periodic in $(x,y)$. But the arguments used in the proof of Lemma~2.1 of \cite{HR2011}, that is, Lemma~2.3 of \cite{H2008} can be immediately extended to the present case. Then,
$$\phi(x-c^*t -(\tau^*+\eta),x,y)\le v(t,x,y), \hbox{ for $x\cdot e-c^* t\le -\sigma_1$}.$$
Thus, $\phi(x-c^*t -(\tau^*+\eta),x,y)\le v(t,x,y)$  for all $(t,x,y)\in\R\times\overline{\Omega}$ which contradicts the definition of $\tau^*$. 

Therefore,
$$\inf_{(t,x,y)\in \R\times\overline{\Omega},\ -\sigma_1\le x\cdot e-c^*t\le \sigma_1} \{v(t,x,y)-\phi(x-c^*t-\tau^*,x,y)\}=0.$$
Then, there exists a sequence $(t_n,x_n,y_n)_{n\in\mathbb{N}}$ such that $s_n=x_n -ct_n-\tau^*\in [-\sigma_1,\sigma_1]$ for all $n\in\mathbb{N}$, and 
$$v(t_n,x_n,y_n)-\phi(x_n -c^*t_n-\tau^*,x_n,y_n)\rightarrow 0 \hbox{ as $n\rightarrow +\infty$}.$$
The arguments of the last paragraph of p.376 of \cite{HR2011} yield that there exist $s_{\infty}\in\R$ and $x_{\infty}\in\R$ such that $x_n-c^*t_n\rightarrow s_{\infty}$, $x''_n\rightarrow x_{\infty}$ as $n\rightarrow +\infty$ and
\be\label{limv}
v(t+t_n,x+x'_n,y)\rightarrow \phi(x-c^*t+s_{\infty}-x_{\infty}-\tau^*,x,y), \hbox{ as $n\rightarrow +\infty$ locally uniformly in $\overline{\Omega}$},
\ee
where $x_n=x'_n+x''_n$, $x_n'\in L_1\mathbb{Z}$ and $x''_n\in [0,L_1]$. On the other hand, it follows from \eqref{vcphi2} that 
$$v(t+t_n,x+x'_n,y)\ge \phi(x-c^*t-(\tau^*+\tau_*)/2,x,y) \hbox{ for $x-c^*t\ge \sigma_0+c^*t_n-x'_n$},$$
which contradicts \eqref{limv} by passing $n$ to $+\infty$ and $\tau^*<\tau_*$.

Consequently, $\tau^*=\tau_*$ and $v(t,x,y)\ge \phi(x\cdot e-c^*t-\tau_*,x,y)$ in $\R\times\overline{\Omega}$. Similarly, one can get the opposite inequality. Eventually, $v(t,x,y)=\phi(x\cdot e-c^*t-\tau_*,x,y)$.
\end{proof}
\vskip 0.3cm

Then, we need the following lemma.

\begin{lemma}\label{phisphi}
Assume that all assumptions of Theorem~\ref{Th1} are satisfied. Then, it holds that
$$\frac{\phi_s(s,x,y)}{\phi(s,x,y)}\rightarrow -\lambda_*^+,\hbox{ as $s\rightarrow +\infty$}.$$
\end{lemma}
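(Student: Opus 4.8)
The plan is to upgrade the two-sided bound on $\phi$ from Theorem~\ref{Th1} to a statement about the logarithmic derivative $\phi_s/\phi$. The starting point is the conclusion of Step~4 of the proof of Theorem~\ref{Th1}, namely that $\phi(s,x,y)\sim B e^{-\lambda_*^+ s}\varphi_*^+(x,y)$ uniformly in $(x,y)\in\overline{\Omega}$ as $s\to+\infty$, together with the upper bound \eqref{upbound}. First I would apply standard interior parabolic estimates to $u(t,x,y)=\phi(x\cdot e-c^*t,x,y)$, which solves \eqref{RD}: on each slab $\{s_0\le x\cdot e-c^*t\le s_0+1\}$ the Schauder (or $L^p$) estimates bound $\|\nabla_{t,x,y}u\|$ in terms of $\|u\|_{L^\infty}$ on a slightly larger slab, and translating in $s$ shows $|\phi_s(s,x,y)|\le C e^{-\lambda_*^+ s}$ for $s$ large. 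This already gives that $\phi_s/\phi$ is bounded for large $s$, but not that it converges.

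To get the precise limit I would set $w(s,x,y)=e^{\lambda_*^+ s}\phi(s,x,y)$, so that $w$ is bounded above and below by positive constants times $\varphi_*^+$ for $s$ large, and $w\to B\varphi_*^+$ uniformly. Rewriting the front equation $\mathcal{L}_{c^*}\phi-f(x,y,\phi)=0$ in terms of $w$, the lowest-order term $-f_u(x,y,0)\phi$ gets replaced, after adding and subtracting, by $-[\lambda_*^+\nabla\cdot(Ae)-\lambda_*^+ q\cdot e-(\lambda_*^+)^2 eAe-f_u(x,y,0)]w$ plus the nonlinear remainder $e^{\lambda_*^+ s}(f_u(x,y,0)\phi-f(x,y,\phi))$, which by \eqref{eq-F} is $O(e^{-\alpha\lambda_*^+ s})\to 0$. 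Thus $w$ satisfies, in parabolic form, an equation whose coefficients converge (as $s\to+\infty$) to those of the operator $L_{\lambda_*^+}-k(\lambda_*^+)$, applied in the periodic variables, with the appropriate oblique boundary condition. By interior parabolic estimates $w$ is bounded in $C^{1,\alpha}_{loc}$; extracting limits along any sequence $s_n\to+\infty$ and using the uniform convergence $w\to B\varphi_*^+$, the limit is the stationary $s$-independent function $B\varphi_*^+(x,y)$, and the convergence of $w$ together with the equation forces $\nabla_{s}w=w_s\to 0$ and $\nabla_{x,y}w\to B\nabla\varphi_*^+$ locally uniformly. Hence
$$\frac{\phi_s}{\phi}=\frac{e^{-\lambda_*^+ s}(w_s-\lambda_*^+ w)}{e^{-\lambda_*^+ s}w}=\frac{w_s}{w}-\lambda_*^+\longrightarrow -\lambda_*^+,$$
uniformly in $(x,y)\in\overline{\Omega}$ by periodicity (the closure of one periodicity cell being compact).

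The main obstacle I anticipate is making the compactness argument for $w$ clean: one must check that the rescaled equation really has coefficients converging uniformly on $\overline{\Omega}$ (the matrix $A$, the field $q$ and $f_u(\cdot,0)$ are already periodic and independent of $s$, so the only $s$-dependence is through the vanishing nonlinear remainder and through $\phi$ itself inside $f_u(x,y,\phi)$, which tends to $f_u(x,y,0)$ as $s\to+\infty$ since $\phi\to 0$), and that the oblique boundary condition $\nu Ae\,\phi_s+\nu A\nabla\phi=0$ passes to the limit correctly; this is routine but must be done with the Hopf-type boundary estimates rather than purely interior ones. An alternative, perhaps shorter, route avoiding PDE compactness is to differentiate the asymptotic expansion: since by Lemma~\ref{lemma2.1}-type reasoning (applied now at the rate $\lambda_*^+$ rather than $\lambda_*$) one can show $\phi(s,x,y)=Be^{-\lambda_*^+ s}\varphi_*^+(x,y)+O(e^{-r's})$ with $r'>\lambda_*^+$, and then by parabolic estimates the same expansion differentiates to $\phi_s(s,x,y)=-B\lambda_*^+ e^{-\lambda_*^+ s}\varphi_*^+(x,y)+O(e^{-r's})$; dividing the two expansions yields the claim immediately. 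I would present the second route, citing the argument of Lemma~\ref{lemma2.1} with $\lambda_*^+$ in place of $\lambda_*$ and $r'=\min\{(1+\alpha)\lambda_*^+,\text{(next exponent)}\}$ in place of $r$, since the machinery is already in place.
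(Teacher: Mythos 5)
Your second route --- the one you say you would actually present --- has a genuine gap. The machinery of Lemma~\ref{lemma2.1} cannot be transplanted from $\lambda_*$ to $\lambda_*^+$: the supersolution there is built from $e^{-rs}\varphi_r$ with the correction exponent $r$ chosen strictly between $\lambda_*$ and $\lambda_*^+$ precisely so that $k(r)+c^*r>0$, and this positivity is what absorbs the nonlinear remainder $\delta(\cdot)^{1+\alpha}$ in the computation $\mathscr{L}\overline{v}_n\ge 0$. Since $\lambda\mapsto k(\lambda)+c^*\lambda$ is concave (Lemma~3.1 of \cite{BHR}), negative at $\lambda=0$ and vanishing exactly at $\lambda_*$ and $\lambda_*^+$, it is positive only on $(\lambda_*,\lambda_*^+)$ and strictly negative for every $\lambda>\lambda_*^+$. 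Hence there is no exponent $r'>\lambda_*^+$ with $k(r')+c^*r'>0$; the leading term of the would-be comparison function has the wrong sign, adding a multiple of $e^{-\lambda_*^+s}\varphi_*^+$ contributes nothing (it lies in the kernel of the linearization), and the claimed refined expansion $\phi=Be^{-\lambda_*^+s}\varphi_*^+ +O(e^{-r's})$ with $r'>\lambda_*^+$ does not follow from ``the machinery already in place''. Proving a remainder estimate beyond the rate $\lambda_*^+$ would need a genuinely different argument, and it is not needed for the lemma.

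Your first route, by contrast, is sound, and you should present that one instead. Setting $w=e^{\lambda_*^+ s}\phi$, the conjugated equation has $s$-independent (periodic) coefficients plus a source which is $O(e^{-\alpha\lambda_*^+ s})$ by \eqref{eq-F} and the upper bound of Theorem~\ref{Th1}; parabolic estimates up to the boundary (conormal condition) give $C^{1,\beta}$ compactness of the translates, and since $w\to B\varphi_*^+$ uniformly, every limit along $s_n\to+\infty$ is $s$-independent, so $w_s\to 0$ and $\phi_s/\phi=w_s/w-\lambda_*^+\to-\lambda_*^+$, uniformly in $(x,y)$ by periodicity and $\min_{\overline{\Omega}}\varphi_*^+>0$. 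This is a genuinely different argument from the paper's: there, one uses the boundedness of $|\phi_s/\phi|$, repeats the liminf/limsup sandwich of Step~4 of the proof of Theorem~\ref{Th1} with $\phi_s$ in place of $\phi$ to get convergence of $-\phi_s e^{\lambda_*^+ s}/\varphi_*^+$, and then identifies the limit of $\phi_s/\phi$ via (the proof of) Proposition~2.2 of \cite{H2008}, which forces it to be a root of $k(\lambda)+c^*\lambda=0$, necessarily $\lambda_*^+$ in view of \eqref{exprate}. The paper's route reuses its iteration scheme and the eigenvalue characterization; your compactness route trades that for boundary regularity theory and is arguably more self-contained.
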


\begin{proof}
Notice that $|\phi_s/\phi|$ is bounded. It means that
$$0<B:=\liminf_{s\rightarrow -\infty}\Big(\min_{(x,y)\in\overline{\Omega}}\frac{\phi_s(s,x,y)}{e^{-\lambda_*^+ s}\varphi_*^+(x,y)}\Big)\le \limsup_{s\rightarrow -\infty}\Big(\max_{(x,y)\in\overline{\Omega}}\frac{\phi_s(s,x,y)}{e^{-\lambda_*^+ s}\varphi_*^+(x,y)}\Big):=B'<+\infty.$$
Follow similar arguments as Step~4 of the proof of Theorem~\ref{Th1}, one can get that $B'=B$. This implies that there is $\lambda$ such that
$$\frac{\phi_s(s,x,y)}{\phi(s,x,y)}\rightarrow \lambda \hbox{ as $s\rightarrow +\infty$}.$$
By the proof of Proposition~2.2 of \cite{H2008}, the constant $\lambda$ should satisfy $k(\lambda)+c^*\lambda=0$ which means $\lambda=\lambda_*$ or $\lambda_*^+$. Since $\phi(s,x,y)\sim Be^{-\lambda_*^+ s}\varphi_*^+(x,y)$ as $s\rightarrow +\infty$, the constant $\lambda$ should be $\lambda_*^+$.
\end{proof}
\vskip 0.3cm

Let $\theta(x,y)$ be a $C^2(\overline{\Omega})$ nonpositive periodic function such that
$$\nu A\nabla\theta +\nu Ae=0 \hbox{ on $\partial\Omega$}.$$
Take a constant $\lambda$ such that 
$$\lambda_*<\lambda<\min\{r,\lambda_*^+\} \hbox{ and } 0<k(\lambda)+c^*\lambda<\mu_1,$$
where $\mu_1$ is defined by \eqref{mu1} and $r$ is given in Theorem~\ref{Th3}. Define $\eta:=(k(\lambda)+c^*\lambda)/2$. Let $s_*\in \R$ such that
$$e^{-\lambda(s_*-1)}\le \min_{\overline{\Omega}} \psi_1,$$
where $\psi_1$ is defined by \eqref{mu1} with $\|\psi_1\|_{L^{\infty}(\mathcal{C})}=1$.
Let $\chi$ be a $C^2(\R; [0,1])$ function such that
$$\chi'(s)\ge 0 \hbox{ for all $s\in \R$},\ \chi(s)=0 \hbox{ for $s\le s_*-1$},\ \chi(s)=1 \hbox{ for $s\ge s_*$}.$$
For $(s,x,y)\in\R\times\overline{\Omega}$, define 
$$g(s,x,y)=\varphi_{\lambda}(x,y) e^{-\lambda s} \chi(s+\theta(x,y)) +\psi_1(x,y)(1-\chi(s+\theta(x,y))).$$
By Lemma~3.1 of \cite{HR2011}, there is $s_0\le 0$ such that 
\be\label{s0}
\phi(s,x,y)-\epsilon g(s+s_0,x,y)\le 1-\frac{\epsilon}{2} \psi_1(x,y), \hbox{ for all $(s,x,y)\in \R\times\overline{\Omega}$ and $\epsilon\in [0,1]$}.
\ee
For $(s,x,y)\in\R\times\overline{\Omega}$, set
\begin{align*}
B(s,x,y)=&(f_u(x,y,0)+\eta) \varphi_{\lambda} e^{-\lambda s} \chi(s+\theta) +(f_u(x,y,1)+\mu_1 -\eta) \psi_1 (1-\chi(s+\theta)) \\
&+\{(\varphi_{\lambda}e^{-\lambda s} -\psi_1)\times[c^*+ q\cdot (\nabla \theta+e)-\nabla\cdot (A\nabla\theta +A e)]\\
&+2(-\lambda\varphi_{\lambda} e^{-\lambda s} e -e^{-\lambda s}\nabla\varphi_{\lambda} +\nabla\psi_1)A(\nabla\theta +e)\}\chi'(s+\theta)\\
& -(\varphi_{\lambda}e^{-\lambda s} -\psi_1)(\nabla\theta A\nabla\theta+eAe+2e A\nabla\theta)\chi''(s+\theta),
\end{align*}
where $e=(1,0,\cdots,0)$ and
\begin{align*}
C(s,x,y)=-\lambda\varphi_{\lambda}e^{-\lambda s}\chi(s+\theta) +(\varphi_{\lambda} e^{-\lambda s} -\psi_1)\chi'(s+\theta).
\end{align*}
Let $\gamma>0$ be a constant such that
\be\label{gamma}
|f_u(x,y,1)-f_u(x,y,u)|\le \eta \hbox{ for $1-\gamma\le u\le 1$ and $0\le u\le \gamma$}.
\ee
Since $\phi(+\infty,\cdot,\cdot)=0$ and by Lemma~\ref{phisphi}, $\lambda<\lambda_*^+$, there is $s_2\ge s_*$ such that
\begin{eqnarray}\label{s2}
\left\{\begin{array}{lll}
&\phi(s,x,y)\le \frac{\gamma}{2},\ \phi_s(s,x,y)\le -\lambda \phi(s,x,y)&\\
&g(s+s_0,x,y)=e^{-\lambda (s+s_0)} \varphi_{\lambda}(x,y),&\\
&B(s+s_0,x,y)=(f_u(x,y,0)+\eta) e^{-\lambda (s+s_0)}\varphi_{\lambda}(x,y),&\\
&C(s+s_0,x,y)=-\lambda e^{-\lambda (s+s_0)}\varphi_{\lambda}(x,y)<0,&
\end{array} 
\hbox{ for $s\ge s_2$ and $(x,y)\in\overline{\Omega}$.}
\right.
\end{eqnarray}
Since $\phi(-\infty,\cdot,\cdot)=1$ and by the definitions of $g$, $B$, $C$, there is $s_1\le s_*-1$ such that
\begin{eqnarray}\label{s1}
\left\{\begin{array}{lll}
&\phi(s,x,y)\ge 1-\frac{\gamma}{2},&\\
&g(s,x,y)=\psi_1(x,y),&\\
&B(s,x,y)=(f_u(x,y,1)+\mu_1 -\eta) \psi_1(x,y),&\\
&C(s,x,y)=0,&
\end{array} 
\hbox{ for $s\le s_1$ and $(x,y)\in\overline{\Omega}$.}
\right.
\end{eqnarray}
Let $k>0$ be the constant such that
\be\label{ck}
-\phi_s(s,x,y)\ge k, \hbox{ for $s_1\le s\le s_2$ and $(x,y)\in\overline{\Omega}$}.
\ee
Notice that all functions $g$, $B$ and $C$ are bounded in $\R\times\overline{\Omega}$.
Define
\be\label{rho}
\rho=\min\Big\{\frac{k}{2\|C\|_{\infty}},\frac{\gamma}{2}, 1\Big\},
\ee
and
\be\label{comega}
\omega=\frac{2(M\|g\|_{\infty}+\|B\|_{\infty})}{k\eta},
\ee
where $M=\sup_{(x,y)\in\overline{\Omega},u\in[0,1]}|f_u(x,y,u)|$. Let $\varepsilon_0=\rho\min_{\overline{\Omega}}\psi_1$.

For $t\ge 0$ and $(x,y)\in\overline{\Omega}$, define
$$\underline{u}(t,x,y)=\max\{\phi(\underline{s}(t,x),x,y)-\rho g(\underline{s}(t,x)+s_0,x,y) e^{-\eta t},0\},$$
and
$$\overline{u}(t,x,y)=\min\{\phi(\overline{s}(t,x),x,y)+\rho g(\overline{s}(t,x),x,y) e^{-\eta t},1\},$$
where $\underline{s}(t,x)=x -c^*t +\omega\rho -\omega\rho e^{-\eta t}+\sigma_0$ and  $\overline{s}(t,x)=x -c^*t -\omega\rho +\omega\rho e^{-\eta t}-\sigma_0$. Here, $\sigma_0$ is a constant to be given.

\begin{lemma}\label{subsup-s}
Under all assumptions of Theorem~\ref{Th3} with above notions, there exists $\sigma_0>0$ such that
$$\underline{u}(t,x,y)\le u(t,x,y)\le \overline{u}(t,x,y), \hbox{ for $t\ge 0$ and $(x,y)\in\overline{\Omega}$}.$$
\end{lemma}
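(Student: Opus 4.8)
The plan is to show that $\overline{u}(t,x,y)$ is a supersolution and $\underline{u}(t,x,y)$ a subsolution of the Cauchy problem~\eqref{RD} once $\sigma_0$ is taken large enough, then conclude by the parabolic comparison principle together with the hypotheses on $u_0$. I will carry out the argument for $\overline{u}$; the argument for $\underline{u}$ is symmetric (with the additional observation that the maximum/minimum with $0$ or $1$ does not spoil the sub/supersolution property since $0$ and $1$ are themselves solutions). First I would record the initial-data step: by the front-like hypothesis on $u_0$, there is $\sigma_0>0$ so large that $\phi(x+\sigma_0,x,y)\le 1-\varepsilon_0\le \overline{u}(0,x,y)$ for $x\le\xi$ for suitable $\xi$, while for $x\ge\xi$ the hypothesis $\limsup_{x\to+\infty}e^{rx}u_0(x,y)<+\infty$ together with $r>\lambda_*$ and the exact decay rate $\phi\sim Be^{-\lambda_*^+ s}\varphi_*^+$ (Theorem~\ref{Th1}, and $\lambda_*^+\ge\lambda_*$, actually we use $\lambda<\lambda_*^+$ via Lemma~\ref{phisphi}) shows $u_0(x,y)\le \rho\,g(\overline{s}(0,x),x,y)\le \overline{u}(0,x,y)$; increasing $\sigma_0$ further handles the overlap region. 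Here one uses $r>\lambda_*$ together with $\lambda<r$: the initial data decays faster than the perturbation $\rho g$, which decays like $e^{-\lambda x}$.

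The core computation is to verify $\mathscr{N}\overline{u}:=\overline{u}_t-\nabla\cdot(A\nabla\overline{u})+q\cdot\nabla\overline{u}-f(z,\overline{u})\ge 0$ wherever $\overline{u}<1$ (on the set $\overline{u}=1$ there is nothing to check). Writing $\overline{u}=\phi(\overline{s},x,y)+\rho\,g(\overline{s},x,y)e^{-\eta t}$ with $\overline{s}=x-c^*t-\omega\rho+\omega\rho e^{-\eta t}-\sigma_0$, one computes $\partial_t\overline{s}=-c^*-\eta\omega\rho e^{-\eta t}$ and $\partial_x\overline{s}=1$. Since $\phi(x-c^*t,x,y)$ solves~\eqref{RD}, the terms coming from $\phi$ reorganize into $(-\eta\omega\rho e^{-\eta t})\phi_s(\overline{s},x,y)$ plus the contribution of the extra $+\eta\omega\rho e^{-\eta t}$ shift in the first argument only — this is why $\phi_s<0$ (monotonicity) and the lower bound $-\phi_s\ge k$ on $[s_1,s_2]$ in~\eqref{ck} are used. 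The terms coming from $\rho g e^{-\eta t}$ produce, after using that $g$ is built from eigenfunctions of $L_\lambda$ and $\psi_1$, a main term $\rho e^{-\eta t}\big[(k(\lambda)+c^*\lambda-\eta)\,\varphi_\lambda e^{-\lambda\overline s}\chi + \mu_1\psi_1(1-\chi)+\cdots\big] = \rho e^{-\eta t}[\,\eta\,\varphi_\lambda e^{-\lambda\overline s}\chi + (\mu_1-\eta)\psi_1(1-\chi)+\cdots]$ since $\eta=(k(\lambda)+c^*\lambda)/2$; this is captured by the auxiliary functions $B(s,x,y)$ and $C(s,x,y)$. One then splits $\R$ into the three regions $\overline{s}\le s_1$, $s_1\le\overline{s}\le s_2$, $\overline{s}\ge s_2$. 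In the two outer regions $\phi$ is within $\gamma/2$ of an equilibrium, so by~\eqref{gamma} the nonlinear defect $f(z,\phi)-f_u(z,0)\phi$ (resp.\ at $1$) is controlled by $\eta$ times the perturbation, and the $\eta$-gain from $B$ beats it; in the outer-right region one additionally uses $\phi_s\le-\lambda\phi$ from~\eqref{s2} so that the $C$-term has a good sign. In the middle region $[s_1,s_2]$, $f$ is merely Lipschitz with constant $M$, so the defect is at most $M\|g\|_\infty\rho e^{-\eta t}$, but there $-\phi_s\ge k$, so the drift term $-(\partial_t\overline s-(-c^*))\,\phi_s=\eta\omega\rho e^{-\eta t}\phi_s\le -\eta\omega\rho e^{-\eta t}k$ — wait, sign — dominates provided $\omega$ is chosen as in~\eqref{comega}, namely $\omega k\eta/2\ge M\|g\|_\infty+\|B\|_\infty$; and $\rho\le k/(2\|C\|_\infty)$ from~\eqref{rho} ensures the $C$-term is absorbed. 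The boundary condition $\nu A\nabla\overline u=0$ on $\partial\Omega$ holds because $\phi$ satisfies the oblique condition $\nu A e\phi_s+\nu A\nabla\phi=0$ and $\theta$ was chosen with $\nu A\nabla\theta+\nu Ae=0$ on $\partial\Omega$, so the $\chi(s+\theta)$-structure of $g$ is exactly tailored to kill the boundary flux.

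With $\overline u$ a supersolution, $\underline u$ a subsolution, and $\underline u(0,\cdot,\cdot)\le u_0\le\overline u(0,\cdot,\cdot)$, the comparison principle for~\eqref{RD} gives $\underline u(t,x,y)\le u(t,x,y)\le\overline u(t,x,y)$ for all $t\ge0$, which is the assertion. The main obstacle is the sign-chasing in the three-region estimate for $\mathscr{N}\overline u$: one must keep careful track of which shift ($\overline s$ moving forward by $\omega\rho e^{-\eta t}$) enters the first slot of $\phi$ versus the spatial slot, make sure the $B$ and $C$ bookkeeping functions really equal the derivatives produced by $g$, and verify that the single choice of constants $(\rho,\omega,\sigma_0,s_1,s_2,k)$ fixed in~\eqref{gamma}--\eqref{comega} simultaneously works in all three regions; the outer-right region is delicate because there one relies on the exponential rate $\lambda<\lambda_*^+$ from Lemma~\ref{phisphi} so that $g$ (decaying like $e^{-\lambda s}$) genuinely dominates $\phi$ (decaying like $e^{-\lambda_*^+ s}$) as $s\to+\infty$, and on $\phi_s\le-\lambda\phi$ to sign the $\chi'$ contribution.
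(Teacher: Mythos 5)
Your proposal follows essentially the same route as the paper's proof: an initial-time comparison using the front-like and decay hypotheses on $u_0$ (with $\lambda<r$ and $\lambda<\lambda_*^+$), then verification that $\overline{u}$ and $\underline{u}$ are super/subsolutions on the sets where the min/max is not saturated, by splitting into the three regions $\overline{s}\le s_1$, $s_1\le\overline{s}\le s_2$, $\overline{s}\ge s_2$ with exactly the roles you assign to $\eta$, $B$, $C$, $-\phi_s\ge k$, and the choices of $\rho$ and $\omega$ in \eqref{rho}--\eqref{comega}, before concluding with the comparison principle. Apart from minor bookkeeping slips in the $t=0$ step (the inequality $\underline{u}(0,\cdot)\le 1-\tfrac{\varepsilon_0}{2}\le u_0$ is the one needed on the left, while $\overline{u}(0,\cdot)=1\ge u_0$ there), your argument is correct and matches the paper's.
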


\begin{proof}
{\it Step~1: when $t=0$.} By assumptions of Theorem~\ref{Th3}, there is $\tau>0$ such that 
$$u_0(x,y)\ge 1-\frac{\varepsilon_0}{2}, \hbox{ for $(x,y)\in\overline{\Omega}$ such that $x\le -\tau$}.$$
By the definition of $s_0$, that is, \eqref{s0}, one knows that
$$\underline{u}(0,x,y)\le 1-\frac{\rho}{2}\psi_1(x,y)\le 1-\frac{\varepsilon_0}{2}\le u_0(x,y), \hbox{ for $(x,y)\in\overline{\Omega}$ such that $x\le -\tau$}.$$
Since $g(s+s_0,x,y)=e^{-\lambda (s+s_0)} \varphi_{\lambda}$ for $s\ge s_2$, $\phi(s,x,y)\sim B e^{-\lambda_*^+ s} \varphi_*^+(x,y)$ as $s\rightarrow +\infty$ and $\lambda<\lambda_*^+$, there is $\sigma_0>0$ such that $\phi(s+\sigma_0,x,y)-\rho g(s+\sigma_0+s_0,x,y)\le 0$ for $s\ge -\tau$ and $(x,y)\in\overline{\Omega}$.
Then,
$$\underline{u}(0,x,y)=\max\{\phi(x+\sigma_0,x,y)-\rho g(x+\sigma_0 +s_0,x,y),0\}=0\le u_0(x,y),$$ 
for $(x,y)\in\overline{\Omega}$ such that $x\ge -\tau$. Thus, one has that $\underline{u}(0,x,y)\le u_0(x,y)$ for all $(x,y)\in\overline{\Omega}$.

By the definition of $g(s,x,y)$, there is $\delta>0$ such that 
$$g(s,x,y)\ge \delta,\hbox{ for $s\le s_2$ and $(x,y)\in\overline{\Omega}$}.$$
By assumptions of Theorem~\ref{Th3} and $\lambda<r$, there are $C>0$ and $\tau>0$ such that 
$$u_0(x,y)\le \rho\delta \hbox{ and } u_0(x,y)\le C e^{-r x}=\rho e^{-\lambda x}\varphi_{\lambda}(x,y), \hbox{ for $(x,y)\in\overline{\Omega}$ such that $x\ge \tau$}.$$
Since $g(s,x,y)=\psi_1(x,y)$ for $s\le s_1$ and $(x,y)\in\overline{\Omega}$ and $\phi(-\infty,\cdot,\cdot)=1$, there is $\sigma_0>0$ such that $\phi(s-\sigma_0,x,y)+\rho g(s-\sigma_0,x,y)\ge 1$ for $s\le \tau$ and $(x,y)\in\overline{\Omega}$. Then,
$$\overline{u}(0,x,y)=\min\{\phi(x-\sigma_0,x,y)+\rho g(x-\sigma_0,x,y),1\}=1\ge u_0(x,y),$$
for $(x,y)\in \overline{\Omega}$ such that $x\le \tau$. For $(x,y)\in\overline{\Omega}$ such that $\tau\le x\le s_2+\sigma_0$, one has that
$$\overline{u}(0,x,y)\ge \rho g(x-\sigma_0,x,y)=\rho \delta\ge u_0(x,y).$$
Since $g(s,x,y)=e^{-\lambda s} \varphi_{\lambda}$ for $s\ge s_2$ and $\sigma_0>0$, it follows that
$$\overline{u}(0,x,y)\ge \rho g(x-\sigma_0,x,y)=\rho e^{-\lambda(x-\sigma_0)}\varphi_{\lambda}\ge u_0(x,y), \hbox{ for $(x,y)\in\overline{\Omega}$ such that $x\ge s_2+\sigma_0\ge \tau$}.$$
Thus, one has that $\overline{u}(0,x,y)\ge u_0(x,y)$ for all $(x,y)\in\overline{\Omega}$.

{\it Step~2: $\underline{u}(t,x,y)$ is a subsolution.}
One can easily check that 
$$\nu A\nabla \underline{u}=0 \hbox{ on $(t,x,y)\in [0,+\infty)\times\partial\Omega$}.$$
Define
$$\Omega^-(t,x,y):=\{(t,x,y)\in [0,+\infty)\times\overline\Omega; \underline{u}(t,x,y)>0\}.$$
After a lengthy but straightforward calculation, one can get that
\begin{align*}
\mathscr{L}\underline{u}:=&\underline{u}_t-\nabla\cdot (A\nabla \underline{u}) +q \cdot \nabla \underline{u} -f(x,y,\underline{u})\\
=& \omega\rho\eta e^{-\eta t} \phi_s +f(x,y,\phi)-f(x,y,\underline{u}) -B(\underline{s}+s_0,x,y)\rho e^{-\eta t} -C(\underline{s}+s_0,x,y)\omega\rho^2\eta e^{-2\eta t},
\end{align*}
for $(t,x,y)\in \Omega^-$ where $\phi_s$, $\phi$ are taken values at $(\underline{s},x,y)$, $\underline{u}$ is taken values at $(t,x,y)$ and $\underline{s}$ denotes $\underline{s}(t,x)$.
If $(t,x,y)\in \Omega^-$ such that $\underline{s}(t,x)\ge s_2$, it follows from \eqref{s2} that
$$\underline{u}(t,x,y)\le \phi(\underline{s},x,y)\le \frac{\gamma}{2}.$$
Then, by \eqref{gamma} and \eqref{s2}, one has that
$$f(x,y,\phi)-f(x,y,\underline{u})\le \rho e^{-\lambda (\underline{s}+s_0)} \varphi_{\lambda} e^{-\eta t} (f_u(x,y,0)+\eta).$$
Notice that $\underline{u}(t,x,y)>0$ means 
\be\label{undu0}
\phi(\underline{s},x,y)>\rho g(\underline{s}+s_0,x,y)=\rho e^{-\lambda(\underline{s}+s_0)} \varphi_{\lambda}(x,y),
\ee
for $(t,x,y)\in\Omega^-$ such that $\underline{s}(t,x)\ge s_2$ by \eqref{s2}.
Thus, it follows from $\phi_s<0$, \eqref{s2} and \eqref{undu0} that
\begin{align*}
\mathscr{L}\underline{u}\le& \rho e^{-\lambda (\underline{s}+s_0)} \varphi_{\lambda} e^{-\eta t} (f_u(x,y,0)+\eta) -(f_u(x,y,0)+\eta) \rho e^{-\lambda (\underline{s}+s_0)} \varphi_{\lambda} e^{-\eta t} +\omega\rho\eta e^{-\eta t} \phi_s \\
&+\lambda e^{-\lambda (\underline{s}+s_0)} \varphi_{\lambda}\omega\rho^2\eta e^{-2\eta t}\\
\le & \omega\rho\eta e^{-\eta t} (-\lambda\phi +\rho\lambda e^{-\lambda (\underline{s}+s_0)} \varphi_{\lambda})\\
\le& 0.
\end{align*}
If $(t,x,y)\in \Omega^-$ such that $\underline{s}(t,x)\le s_1$, it follows from \eqref{s1} and \eqref{rho} that
$$\phi(\underline{s},x,y)\ge \underline{u}(t,x,y)\ge \phi(\underline{s},x,y)-\rho g(\underline{s}+s_0,x,y)\ge 1-\gamma.$$
Then, by \eqref{gamma} and \eqref{s1}, one has that
$$f(x,y,\phi)-f(x,y,\underline{u})\le \rho \psi_1(x,y) e^{-\eta t}(f_u(x,y,1)+\eta).$$
Thus, it follows from $\phi_s<$0 and \eqref{s1} that
$$\mathscr{L}\underline{u}\le \rho \psi_1(x,y) e^{-\eta t}(f_u(x,y,1)+\eta)-(f_u(x,y,1)+\mu_1-\eta)\psi_1 \rho e^{-\eta t}\le 0.$$
If $(t,x,y)\in \Omega^-$ such that $s_1\le \underline{s}(t,x)\le s_2$, it follows from \eqref{ck}, \eqref{rho} and \eqref{comega} that
$$\mathscr{L}\underline{u}\le -k\omega\rho\eta e^{-\eta t} +M\rho\|g\|_{\infty} e^{-\eta t} +\|B\|_{\infty} \rho e^{-\eta t} +\omega \rho^2 \eta e^{-2\eta t} \|C\|_{\infty} \le 0.$$

In conclusion, $\mathscr{L}\underline{u}\le 0$ for $(t,x,y)\in [0,+\infty)\times\overline{\Omega}$ and $\underline{u}(t,x,y)$ is a subsolution. By the comparison principle, one gets that
$$u(t,x,y)\ge \underline{u}(t,x,y),\hbox{ for $(t,x,y)\in [0,+\infty)\times\overline{\Omega}$}.$$

{\it Step~3: $\overline{u}(t,x,y)$ is a supersolution.}
One can easily check that 
$$\nu A\nabla \overline{u}=0 \hbox{ on $(t,x,y)\in [0,+\infty)\times\partial\Omega$}.$$
Define
$$\Omega^+(t,x,y):=\{(t,x,y)\in [0,+\infty)\times\overline\Omega; \overline{u}(t,x,y)<1\}.$$
After a lengthy but straightforward calculation, one can get that
\begin{align*}
\mathscr{L}\overline{u}:=&\overline{u}_t-\nabla\cdot (A\nabla \overline{u}) +q \cdot \nabla \overline{u} -f(x,y,\overline{u})\\
=& -\omega\rho\eta e^{-\eta t} \phi_s +f(x,y,\phi)-f(x,y,\overline{u}) +B(\overline{s},x,y)\rho e^{-\eta t} -C(\overline{s},x,y)\omega\rho^2\eta e^{-2\eta t},
\end{align*}
for $(t,x,y)\in \Omega^+$ where $\phi_s$, $\phi$ are taken values at $(\overline{s},x,y)$, $\underline{u}$ is taken values at $(t,x,y)$ and $\overline{s}$ denotes $\overline{s}(t,x)$.
If $(t,x,y)\in \Omega^+$ such that $\overline{s}(t,x)\ge s_2$, it follows from \eqref{s2} and \eqref{rho} that
$$\phi(\overline{s},x,y)\le \overline{u}(t,x,y)\le \phi(\overline{s},x,y)+\rho g(\overline{s},x,y)\le \gamma.$$
Then, by \eqref{gamma} and \eqref{s2}, one has that
$$f(x,y,\phi)-f(x,y,\overline{u})\ge -\rho e^{-\lambda \overline{s}} \varphi_{\lambda} e^{-\eta t} (f_u(x,y,0)+\eta).$$
Thus, it follows from $\phi_s<0$ and \eqref{s2} that
\begin{align*}
\mathscr{L}\overline{u}\ge& -\rho e^{-\lambda \overline{s}} \varphi_{\lambda} e^{-\eta t} (f_u(x,y,0)+\eta) +(f_u(x,y,0)+\eta) \rho e^{-\lambda \overline{s}} \varphi_{\lambda} e^{-\eta t}=0.
\end{align*}
If $(t,x,y)\in \Omega^+$ such that $s(t,x)\le s_1$, it follows from \eqref{s1} that
$$\overline{u}(t,x,y)\ge \phi(\overline{s},x,y)\ge 1-\frac{\gamma}{2}.$$
Then, by \eqref{gamma} and \eqref{s1}, one has that
$$f(x,y,\phi)-f(x,y,\overline{u})\ge -\rho \psi_1(x,y) e^{-\eta t}(f_u(x,y,1)+\eta).$$
Thus, it follows from $\phi_s<$0 and \eqref{s1} that
$$\mathscr{L}\overline{u}\ge -\rho \psi_1(x,y) e^{-\eta t}(f_u(x,y,1)+\eta)+(f_u(x,y,1)+\mu_1-\eta)\psi_1 \rho e^{-\eta t}\ge 0.$$
If $(t,x,y)\in \Omega^+$ such that $s_1\le s(t,x)\le s_2$, it follows from  \eqref{ck}, \eqref{rho} and \eqref{comega} that
$$\mathscr{L}\overline{u}\ge k\omega\rho\eta e^{-\eta t} -M\rho\|g\|_{\infty} e^{-\eta t} -\|B\|_{\infty} \rho e^{-\eta t} -\omega \rho^2 \eta e^{-2\eta t} \|C\|_{\infty} \ge 0.$$

In conclusion, $\mathscr{L}\overline{u}\le 0$ for $(t,x,y)\in [0,+\infty)\times\overline{\Omega}$ and $\overline{u}(t,x,y)$ is a supersolution. By the comparison principle, one gets that
$$u(t,x,y)\le \overline{u}(t,x,y),\hbox{ for $(t,x,y)\in [0,+\infty)\times\overline{\Omega}$}.$$

This completes the proof of Lemma~\ref{subsup-s}.
\end{proof}
\vskip 0.3cm

Now, we are ready to prove Theorem~\ref{Th3}.

\begin{proof}[Proof of Theorem~\ref{Th3}]
For $n\in\mathbb{N}$, let 
$$t_n=\frac{nL_1}{c^*}$$ 
where $L_1$ is the period of $x$. Define $u_n(t,x,y)=u(t+t_n,x+nL_1,y)$ for $t\ge -t_n$ and $(x,y)\in \overline{\Omega} -n L_1=\overline{\Omega}$. By standard parabolic estimates, the sequence $u_n(t,x,y)$ converges, up to extraction of a subsequence, to a solution $u_{\infty}(t,x,y)$ of \eqref{RD} locally uniformly in $\R\times\overline{\Omega}$ as $n\rightarrow +\infty$. On the other hand, by Lemma~\ref{subsup-s} and $\phi(s,x,y)$, $g(s,x,y)$ are periodic in $x$, it follows that
\be\label{un}
\begin{aligned}
\phi(\underline{s}_n(t,x),x,y)-&\rho g(\underline{s}_n(t,x)+s_0,x,y)e^{-\eta (t+t_n)}\\
\le& u_n(t,x,y)\le \phi(\overline{s}_n(t,x),x,y)+\rho g(\overline{s}_n(t,x),x,y)e^{-\eta (t+t_n)}, \hbox{ for all $n\in\mathbb{N}$},
\end{aligned}
\ee
where 
$$\underline{s}_n(t,x)=x-c^* t+\omega\rho -\omega\rho e^{-\eta(t+t_n)}+\sigma_0 \hbox{ and }
\overline{s}_n(t,x)=x-c^* t-\omega\rho +\omega\rho e^{-\eta(t+t_n)}-\sigma_0.$$
By passing $n$ to $+\infty$, one has that
$$\phi(x-c^* t+\omega\rho +\sigma_0,x,y)\le u_{\infty}(t,x,y)\le \phi(x-c^* t-\omega\rho -\sigma_0,x,y).$$
By Proposition~\ref{Lio}, there is $\tau$ such that
$$u_{\infty}(t,x,y)=\phi(x-c^* t+\tau,x,y).$$
Together with \eqref{un}, one can get that
\be\label{utn}
u(t_n,x,y)-\phi(x-ct_n+\tau,x,y)\rightarrow 0 \hbox{ as $n\rightarrow +\infty$ uniformly in $\overline{\Omega}$}.
\ee
Take any small constant $\varepsilon>0$. Define
$$\underline{u}(t,x,y)=\max\{\phi(\underline{s}_n(t,x),x,y)-\varepsilon g(\underline{s}_n(t,x)+s_0,x,y) e^{-\eta (t-t_n)},0\},$$
and
$$\overline{u}(t,x,y)=\min\{\phi(\overline{s}_n(t,x),x,y)+\varepsilon g(\overline{s}_n(t,x),x,y) e^{-\eta (t-t_n)},1\},$$
where $\underline{s}(t,x)=x -c^*t+\tau+\varepsilon +\omega\varepsilon -\omega\varepsilon e^{-\eta (t-t_n)}$ and  $\overline{s}(t,x)=x -c^*t +\tau-\varepsilon-\omega\varepsilon +\omega\varepsilon e^{-\eta (t-t_n)}$.
By properties of $g$, one can easily check that for any $s_*\in\R$, there is $\delta_*>0$ such that
$$\phi(s,x,y)-(\phi(s+\varepsilon,x,y)-\varepsilon g(s+\varepsilon+s_0,x,y))\ge \delta_*>0, \hbox{ for $s\le s_*$ and $(x,y)\in\overline{\Omega}$},$$
and
$$\phi(s+\varepsilon,x,y)-g(s+\varepsilon+s_0,x,y)\le 0, \hbox{ for $s\ge s_*$ and $(x,y)\in\overline{\Omega}$}.$$
By \eqref{utn}, there is $N_1>0$ such that for $n\ge N_1$,
\be\label{N1}
\underline{u}(t_n,x,y)\le u(t_n,x,y), \hbox{ for $(x,y)\in\overline{\Omega}$}.
\ee
On the other hand, one can easily check that for any $s'_*\in\R$, there is $\delta'_*>0$ such that
\be\label{phi+eg}
\phi(s-\varepsilon,x,y)+\varepsilon g(s-\varepsilon,x,y)-\phi(s,x,y)\ge \delta_*>0, \hbox{ for $s\le s'_*$ and $(x,y)\in\overline{\Omega}$}.
\ee
Moreover, by \eqref{un}, there are $C_1$, $C_2>0$ such that
$$u(t_n,x,y)\le C_1 e^{-\lambda_*^+(x-c^*t_n)} +C_2 e^{-\eta t_n} e^{-\lambda (x-c^* t_n)}, \hbox{ as $x-c^* t_n\rightarrow +\infty$}.$$
This implies that
$$\frac{\phi(x-c^* t_n+\tau-\varepsilon,x,y)+\varepsilon g(x-c^* t_n+\tau-\varepsilon,x,y)}{u(t_n,x,y)}>1, \hbox{ as $x-c^* t_n\rightarrow +\infty$ for all large $n$},$$
since $g(s,x,y)=e^{-\lambda s}\varphi_{\lambda}(x,y)$ for $s$ large enough and $\lambda<\lambda_*^+$. By \eqref{utn} and \eqref{phi+eg}, there is $N_2>0$ such that for $n\ge N_2$,
\be\label{N2}
u(t_n,x,y)\le \overline{u}(t_n,x,y), \hbox{ for $(x,y)\in\overline{\Omega}$}.
\ee
Thus, by \eqref{N1} and \eqref{N2}, there is $t_N>0$ such that 
\begin{align*}
\underline{u}(t_N,x,y)\le u(t_N,x,y)\le \overline{u}(t_N,x,y), \hbox{ for $(x,y)\in\overline{\Omega}$}.
\end{align*}
By following the same proof of Lemma~\ref{subsup-s}, one can get that
\begin{align*}
\underline{u}(t,x,y)\le u(t,x,y)\le \overline{u}(t,x,y), \hbox{ for $t\ge t_N$ and $(x,y)\in \overline{\Omega}$}.
\end{align*}
As $t\rightarrow +\infty$, one has that
$$\phi(x-c^* t +\tau+\varepsilon+\omega\varepsilon,x,y)\le u(t,x,y)\le \phi(x-c^* t +\tau-\varepsilon-\omega\varepsilon,x,y).$$
Since $\varepsilon$ is arbitrarily small and $|\phi_s|$ is globally bounded, one finally has that
$$u(t,x,y)\rightarrow \phi(x-c^* t +\tau,x,y) \hbox{ as $t\rightarrow +\infty$ uniformly in $\overline{\Omega}$}.$$
This completes the proof.
\end{proof}

%%%%%%%%%%%%%%%%%%%%%%%%%%%%%%%%%%%%%%%%%%%%%%%
%%%%%%%%%%%%%%%%%%%%%%%%%%%%%%%%%%%%%%%%%%%%%%%


\begin{thebibliography}{99}

\footnotesize{

\bibitem{BH2002} H. Berestycki, F. Hamel, Front propagation in periodic excitable media, {\it Comm. Pure Appl. Math.} \textbf{55} (2002), 949-1032.

\bibitem{BHN} H. Berestycki, F. Hamel, N. Nadirashvili, The speed of propagation for KPP type problems. I. Periodic framework, {\it J. Eur. Math. Soc.} \textbf{7} (2005), 173-213.

\bibitem{BHR} H. Berestycki, F. Hamel, L. Roques, Analysis of the periodically fragmented environment model. II. Biological invasions and pulsating traveling fronts, {\it J. Math. Pures Appl.} \textbf{84} (2005), 1101-1146.

\bibitem{GGHR} J. Garnier, T. Giletti, F. Hamel, L. Roques, Inside dynamics of pulled and pushed fronts, {\it J. Math. Pures Appl.} \textbf{98} (2012), 428-449.

\bibitem{H2008} F. Hamel, Qualitative properties of monostable pulsating fronts: exponential decay and monotonicity, {\it J. Math. Pures Appl.} \textbf{89} (2008), 355-399.

\bibitem{HR2011} F. Hamel and L. Roques, Uniqueness and stability properties of monostable pulsating fronts, {\it J. European Math. Soc.} 13 (2011), 345-390. 

\bibitem{MNT} H. Matano, M. Nara, M. Taniguchi, Stability of planar waves in the Allen-Cahn equation, {\it Comm. Part. Diff. Equations} \textbf{34} (2009), 976-1002.



\bibitem{R} J.-M. Roquejoffre, Eventual monotonicity and convergence to travelling fronts for the solutions of parabolic equations in cylinders, {\it Ann. Inst. H.~Poincar\'e, Anal. Non Lin\'eaire} \textbf{14} (1997), 499-552.


\bibitem{Rothe} F. Rothe, Convergence to pushed fronts, {\it Rocky Mountain J. Math.} \textbf{11} (1981), 617-634.


\bibitem{Sto} A.N. Stokes, On two types of moving front in quasilinear diffusion, {\it Math. Biosci.} \textbf{31} (1976), 307-315.



\bibitem{van} W. van Saarloos, Front propagation into unstable states, {\it Phys. Rep.} \text{386} (2003), 29-222.

\bibitem{Ve} J.W. Vega, Multidimensional travelling wave fronts in a model from  combustion theory and in related problems, Differential and Integral equations, \textbf{6} (1993), 131-155.


\bibitem{X} J.X. Xin, Existence of planar flame fronts in convective-diffusive periodic media, {\it Arch. Ration. Mech. Anal.} \textbf{121} (1992), 205-233.


}

\end{thebibliography}
\end{document}